\pgfplotsset{compat=1.10}
\definecolor{racing}{rgb}{0.7,0.1,0.2}
\definecolor{french}{rgb}{0,0.2,0.7}
\tikzset{
	state/.style={
		rectangle,
		rounded corners,
		draw=black, very thick,
		minimum height=2em,
		inner sep=2pt,
		text centered,
	},
}
\newcommand{\mean}[1]{\,-\hskip-1.08em\int_{#1}}
\def\ds{\displaystyle}
\def\eps{{\varepsilon}}
\def\N{\mathbb{N}}
\def\O{\Omega}
\def\R{\mathbb{R}}
\def\Q{\mathbb{Q}}
\def\A{\mathcal{A}}
\def\V{\mathcal{V}}
\def\HH{\mathcal{H}}
\newcommand{\be}{\begin{equation}}
\newcommand{\ee}{\end{equation}}
\newcommand{\cp}{\mathop{\rm cap}\nolimits}
\numberwithin{equation}{section}
\theoremstyle{plain}
\newtheorem{teo}{Theorem}[section]
\newtheorem{lemma}[teo]{Lemma}
\newtheorem{cor}[teo]{Corollary}
\newtheorem{prop}[teo]{Proposition}
\theoremstyle{remark}
\newtheorem{oss}[teo]{Remark}
\newtheorem{exam}[teo]{Example}
\theoremstyle{plain}
\newcommand{\mathbbmm}[1]{\text{\usefont{U}{bbm}{m}{n}#1}}
\newcommand{\ind}{\mathbbmm{1}}
\title{Free boundary cluster with Robin condition on the transmission interface}
\author[S. Guarino Lo Bianco, D. A. La Manna, B. Velichkov]{
	Serena Guarino Lo Bianco, Domenico Angelo La Manna, Bozhidar Velichkov}
\address{
	Serena Guarino Lo Bianco\newline \indent
	Universit\`a degli studi di Napoli ``Federico II''\newline \indent
	Dipartimento di Agraria\newline \indent
	Via Universit\`a 100,
	80055 Portici (NA)- ITALY.
}
\email{ serena.guarinolobianco@unina.it}
\address{
Domenico Angelo La Manna\newline \indent
Universit\`a degli studi di Napoli ``Federico II''\newline \indent
Dipartimento di Matematica "Renato Caccioppoli"\newline \indent 
Via Cintia, Monte S. Angelo - 80126 Napoli - ITALY}
\email{domenicolamanna@hotmail.it}
\address {Bozhidar Velichkov: \newline \indent
	Dipartimento di Matematica, Universit\`a di Pisa \newline \indent
	Largo Bruno Pontecorvo, 5, 56127 Pisa - ITALY}
\email{bozhidar.velichkov@unipi.it}
\begin{document}

	\begin{abstract}
		We formulate and study a variational two-phase free boundary problem with Robin condition on the interface between the two phases, and we prove existence and regularity of solutions in dimension two.
	\end{abstract}
	
	\keywords{Free boundary problems, free interface, optimal transmission problems, Robin boundary conditions, regularity}
	\subjclass{35R35, 49Q10}
\maketitle	

\section{Introduction}
Free boundary problems with two  and more phases are often used to describe models in different areas of Physics, Engineering and Life Sciences, for instance in Fluid Dynamics (Bernoulli free boundary problems), Dynamics of Populations (optimal partitions problems), Mechanics and Phase Transition (obstacle problems). The different phases are called \emph{segregated} if they occupy different space regions; segregation occurs for instance in the two-phase Bernoulli problem, the two-phase obstacle problem,  optimal partitions problems. \medskip

\noindent In all these cases the interaction between the different phases is supposed to be \emph{competitive}, in particular, the interfaces are not formed because it is convenient energetically, but due to the lack of space. For instance, if we have two disjoint one-phase solutions of the variational Bernoulli (or obstacle) problem, then the couple they form is a minimizer to the corresponding two-phase problem, and even if the two phases are very close to each other, an interface is not formed (we briefly discuss this phenomenon in \cref{sub:intro-bernoulli}). \medskip

\noindent In this paper, we consider a two-phase problem, in which the phases are still \emph{segregated}, but the interaction along the free interface is \emph{collaborative}. In this case, if two or more disjoint one-phase solutions are sufficiently close, then it is energetically convenient for them to create a free interface, that is, the formation of clusteres is incentivized.\medskip

\noindent We introduce the functional in \cref{sub:intro-robin}, while in \cref{sub:statement} we state the variational problem and the main results of the paper.

\subsection{The classical one-phase and two-phase Bernoulli free boundary problems}\label{sub:intro-bernoulli}
Let $D$ be a smooth bounded open set in $\R^d$. Let $g:\partial D\to\R$ be a given nonnegative function and $\lambda>0$ a given constant. The classical one-phase Bernoulli problem can be stated as follows. Find a domain $\Omega\subset D$ and a function $u:D\to\R$ such that $u=g$ on $\partial D$ and 
$$\Delta u=0\quad\text{in}\quad \Omega\,,\qquad	u=0\quad\text{and}\quad|\nabla u|=\lambda\quad\text{on}\quad \partial\Omega\cap D\,.$$
In the seminal paper \cite{altcaf} Alt and Caffarelli showed that the existence of such a couple $(u,\Omega)$ can be obtained by minimizing the functional 
$$J_\lambda(u)=\int_{D}|\nabla u|^2+\lambda^2|\{u>0\}\cap D|,$$
among all functions in $H^1(D)$ such that $u=g$ on $\partial D$, and then taking $\Omega:=\{u>0\}$.\medskip

In the two-phase problem, we are given two constants $\lambda_1>0$ and $\lambda_2>0$ and two nonnegative functions $g_1:\partial D\to\R$ and $g_2:\partial D\to \R$ with disjoint supports. Then, the two-phase free boundary problem is the following. Find two \emph{disjoint} sets $\Omega_1$ and $\Omega_2$ in $D$ and two functions $u_1:D\to\R$ and $u_2:D\to\R$ such that $u_1=g_1$ and $u_2=g_2$ on $\partial D$ and  
 \begin{equation}
 \begin{cases}
 \begin{array}{ll} 
 \Delta u_1=0\quad\text{in}\quad\Omega_1\qquad\text{and}\qquad	 \Delta u_2=0\quad\text{in}\quad\Omega_2\,;\\
 u_1=0\quad\text{and}\quad|\nabla u_1|=\lambda_1\quad\text{on}\quad D\cap\partial\Omega_1\setminus\partial\Omega_2\,;\\
 u_2=0\quad\text{and}\quad|\nabla u_2|=\lambda_2\quad\text{on}\quad D\cap\partial\Omega_2\setminus\partial\Omega_1\,;\\
u_1=u_2=0\quad\text{and}\quad |\nabla u_1|^2-|\nabla u_2|^2=\lambda_1^2-\lambda_2^2\quad\text{on}\quad D\cap\partial\Omega_1\cap\partial\Omega_2\,.
 \end{array}
 \end{cases}
\end{equation}	
The existence of a solution can be obtained by minimizing the functional 
$$J_{\lambda_1,\lambda_2}(u)=\int_{D}|\nabla u|^2+\lambda_1^2|\{u>0\}\cap D|+\lambda_2^2|\{u<0\}\cap D|,$$
among all functions in $H^1(D)$ such that $u=g_1-g_2$ on $\partial D$, and then taking $u_1=\max\{u,0\}$, $u_2=\max\{-u,0\}$, $\Omega_1=\{u_1>0\}$ and $\Omega_2=\{u_2>0\}$ (see \cite{alcafr}, \cite{SV} and \cite{DSV}).\medskip

In the two-phase problem, the two-phase interface $\partial\Omega_1\cap\partial\Omega_2$ is formed when the two sets $\Omega_1$ and $\Omega_2$ act as geometric obstacles to each other; if $\Omega_1$ and $\Omega_2$ are disjoint one-phase solutions, then the two-phase interface is simply not formed.  In other words, if $u_1$ and $u_2$ are minimizers of the one-phase functionals $J_{\lambda_1}$ and $J_{\lambda_2}$ such that $u_1u_2\equiv0$, then it is immediate to check that $u=u_1-u_2$ is a minimizer of the two-phase functional $J_{\lambda_1,\lambda_2}$. In fact, if $v\in H^1(D)$ is such that $v=u$ on $\partial D$, then $v_+=u_1$ and $v_-=u_2$ on $\partial D$ and so, by the optimality of $u_1$ and $u_2$, we get
$$J_{\lambda_1,\lambda_2}(v)=J_{\lambda_1}(v_+)+J_{\lambda_2}(v_-)\ge J_{\lambda_1}(u_1)+J_{\lambda_2}(u_2)=J_{\lambda_1,\lambda_2}(u).$$


\subsection{A two-phase problem with Robin condition on the free interface}\label{sub:intro-robin} 
In this paper we study a different type of two-phase problem in which the two state functions $u_1$ and $u_2$ might not vanish on the interface $\partial\Omega_1\cap\partial\Omega_2$. Precisely, given $\beta>0$, $\Lambda>0$ and a fixed set $D$, we consider the functional 
$$J_{\beta,\Lambda}(u,\Omega_1,\Omega_2)=\int_{D}|\nabla u|^2\,dx+\beta\int_{\partial\Omega_1\cap\partial\Omega_2}u^2\,d\HH^{d-1}+\Lambda|\{u>0\}\cap D|,$$
defined for couples of disjoint domains $\Omega_1,\Omega_2$ in $D$ and functions $u\in H^1(D)$ with $u=0$ on $D\setminus(\Omega_1\cup\Omega_2)$. We will then show that if $\big(\Omega_1, \Omega_2, u\big)$ locally minimize $J_{\beta,\Lambda}$ in $D$, then the functions
$$u_1=u\ind_{\Omega_1}\qquad\text{and}\qquad u_2=u\ind_{\Omega_2}\ ,$$
are solutions to the problem
\begin{equation}
\begin{cases}
\begin{array}{ll} 
\Delta u_1=0\quad\text{in}\quad\Omega_1\qquad\text{and}\qquad	 \Delta u_2=0\quad\text{in}\quad\Omega_2\,\\
u_1=0\quad\text{and}\quad|\nabla u_1|=\sqrt{\Lambda}\quad\text{on}\quad D\cap\partial\Omega_1\setminus\partial\Omega_2\,\\
u_2=0\quad\text{and}\quad|\nabla u_2|=\sqrt\Lambda\quad\text{on}\quad D\cap\partial\Omega_2\setminus\partial\Omega_1\,\\
u_1=u_2\quad\text{and}\quad |\nabla u_1|+|\nabla u_2|=\beta (u_1+u_2)\quad\text{on}\quad D\cap\partial\Omega_1\cap\partial\Omega_2\,,
\end{array}
\end{cases}
\end{equation}	
and satisfy an additional condition on $\partial\Omega_1\cap\partial\Omega_2$ involving the mean curvature of the interface (see \cite{GLMV}). \medskip

Notice that if $(u_1,\Omega_1)$ and $(u_2,\Omega_2)$ are two minimizers of the one-phase Bernoulli functional $J_{\sqrt\Lambda}$ with disjoint supports ($\Omega_1\cap\Omega_2=\emptyset$), the triple ($\Omega_1$, $\Omega_2$, $u=u_1+u_2$) might not be a minimizer of $J_{\beta,\Lambda}$, even if the Hausdorff distance between $\Omega_1$ and $\Omega_2$ is strictly positive. In fact, it might be convenient to enlarge the domains $\Omega_1$ and $\Omega_2$ in order to obtain a non-empty interface $\partial\Omega_1\cap\partial\Omega_2$ that will allow to have competitors which are not vanishing identically on the entire free boundaries $\partial\Omega_1$ and $\partial\Omega_2$. This is illustrated by the following one-dimensional example.

\begin{exam}[Formation of an interface in 1D]
Let $\eps>0$ and $\beta>0$ be fixed. We consider the interval 
$D=[-1-\eps,1+\eps]$ and the boundary data $g_1, g_2:\partial D\to\R$ given by 
$$g_1(-1-\eps)=1\,,\quad g_1(1+\eps)=0\,,\quad g_2(-1-\eps)=0\,,\quad g_2(1+\eps)=1.$$
The minimizers of the one-phase function 
$$J_1=\int_D|u'(x)|^2\,dx+|\{u>0\}\cap D|$$
with boundary conditions $g_1$ and $g_2$ are respectively the functions
$$u_1(x)=(-x-\eps)_+\qquad\text{and}\qquad u_2(x)=(x-\eps)_+.$$
If we consider the sets $\Omega_1=(-1-\eps,-\eps)$ and $\Omega_2=(\eps,1+\eps)$, then we have that 
$$J_{\beta,1}(u_1+u_2,\Omega_1,\Omega_2)=J_1(u_1)+J_1(u_2)=4.$$
On the other hand, by taking
$$\widetilde \Omega_1=[-1-\eps,0]\ ,\quad \widetilde \Omega_2=[0,1+\eps]\ , \quad u(x)=\begin{cases}
1\quad\text{if}\quad x=-1-\eps,\\
\ell\quad\text{if}\quad x=0,\\
1\quad\text{if}\quad x=1+\eps,
\end{cases}$$
and extending $u$ linearly on the intervals $[-1-\eps,0]$ and $[0,1+\eps]$
we obtain that 
$$J_{\beta,1}\big(u,\widetilde\Omega_1,\widetilde\Omega_2\big)=2\frac{(1-\ell)^2}{1+\eps}+\beta\ell^2+2+2\eps.$$
Setting the parameter $\ell$ to be the optimal one, 
$\ds \ell=\frac{2}{2+\beta+\eps\beta},$
we get that
$$J_{\beta,1}\big(u,\widetilde\Omega_1,\widetilde\Omega_2\big)=2\frac{(1+\eps)\beta^2}{(2+\beta+\eps\beta)^2}+\beta\left(\frac{2}{2+\beta+\eps\beta}\right)^2+2+2\eps.$$
When $\eps=0$, we get
$$J_{\beta,1}\big(u,\widetilde\Omega_1,\widetilde\Omega_2\big)=2\frac{\beta^2}{(2+\beta)^2}+\beta\left(\frac{2}{2+\beta}\right)^2+2=2\frac{\beta^2+2\beta}{\beta^2+4\beta+4}+2<4.$$
In conclusion, if we fix $\beta>0$ we can find $\eps_0>0$ such that 
$$J_{\beta,1}\big(u,\widetilde\Omega_1,\widetilde\Omega_2\big)<4\qquad\text{for all}\qquad \eps\in(0,\eps_0),$$
which means that for those choices of $\beta$ and $\eps$, the combination of the two one-phase solutions is not optimal. 
\end{exam}

\subsection{Setting of the problem and main theorem}\label{sub:statement}
We will define the variational problem for the functional $J_{\beta,\Lambda}$ in the class of sets of finite perimeter and Sobolev functions. Then, we will prove an existence theorem in this class and we will show that the minimizers are regular. We fix the boundary data for $\Omega_1$, $\Omega_2$ and $g$. Precisely, let 
\begin{itemize}
\item $E_1$ and $E_2$ be two smooth, bounded and disjoint sets of positive distance in $\R^d$;
\item $D:=\R^d\setminus \Big(\overline E_1\cup\overline E_2\Big)$;
\item $\O_i=E_i$ in $\R^d\setminus D$;
\item $g\in H^1(\R^d)\cap L^\infty(\R^d)$ be a non-negative function such that 
$$ g\equiv 1\quad\text{on}\quad E_1\cup E_2\,.$$
\end{itemize}	

\noindent We define the following admissible set of functions 
$$\V=\Big\{u\in H^1(\R^d) \,:\, u\ge 0\,\text{ in }\,\R^d \,\,\,\text{and}\,\,\, u-g\in H^1_0(D) \Big\}.$$
Then, fixed $u\in\mathcal V$, we define the admissible set $\mathcal A(u)$ as the set of all couples $(\Omega_1,\Omega_2)$ of Lebesgue measurable sets such that: 
\begin{itemize}
\item $\Omega_1\cap\Omega_2=\emptyset$, $E_1\subset \Omega_1$ and $E_2\subset \Omega_2$ Lebesgue almost-everywhere;
\item $\Omega_1$ and $\Omega_2$ have finite perimeter (as subsets of $\R^d$);
\item $\{u>0\}\subset{\Omega_1\cup\Omega_2}$ Lebesgue almost-everywhere.
\end{itemize}	
For every $\beta>0$ and $\Lambda>0$, we consider the functional 
$J_{\beta,\Lambda}$,
defined for functions $u\in\mathcal V$ and couples of sets $(\Omega_1,\Omega_2)\in\mathcal A(u)$, as
$$J_{\beta,\Lambda}(u,\Omega_1,\Omega_2):= \int_D |\nabla u|^2\,dx+ \beta\int_{\partial^\ast\Omega_1\cap \partial^\ast \Omega_2}u^2 \,d\HH^{d-1}+\Lambda|\{u>0\}\cap D|,$$
where $\partial^\ast\Omega_j$ is the reduced boundary of $\Omega_j$; we recall that since $u$ is a bounded Sobolev function, the second integral is well-defined (see \cref{s:preliminaries}). \medskip

\noindent In this paper we consider the variational problem 
\begin{equation} \label{problema}
\min \Big\{J_{\beta,\Lambda}(u,\Omega_1, \Omega_2) \,:\, u\in \V\,,\ (\Omega_1,\Omega_2)\in \A(u)\Big\}.
\end{equation}
The main result is the following. 
\begin{teo}\label{t:main}
Let $D$ be a smooth bounded open set in $\R^d$, $d\ge 2$. Given sets $E_1$ and $E_2$, and a function $g$ as above, there are a function $u\in\mathcal V$ and sets  $(\Omega_1,\Omega_2)\in\mathcal A(u)$ that solve the variational problem \eqref{problema}. Conversely, if $(u,\Omega_1,\Omega_2)$ is a solution to \eqref{problema}, then also $(u,\widetilde \Omega_1,\widetilde\Omega_2)$ is a solution to \eqref{problema}, where
$$\widetilde\Omega_1=\{u>0\}\cap\Omega_1\qquad\text{and}\qquad\widetilde\Omega_2=\{u>0\}\cap\Omega_2.$$
Moreover,
\begin{enumerate}[\quad\rm(i)]
\item the boundary $\partial\{u>0\}$ is $C^{1,\alpha}$-regular in $D$;\smallskip
\item the interface $\partial\Omega_1\cap\partial\Omega_2$ is $C^\infty$ in the open set  $D\cap\{u>0\}$ and is $C^{1}$ regular up to the boundary $D\cap\partial\{u>0\}$. Moreover,  $\partial\Omega_1\cap\partial\Omega_2$ reaches $\partial\{u>0\}$ orthogonally.
\end{enumerate}	
\end{teo}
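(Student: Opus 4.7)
My plan is to prove existence by the direct method—using an $SBV$-type compactness argument to compensate for the absence of a perimeter bound on $\Omega_j$—then to establish the reduction $\widetilde\Omega_j=\{u>0\}\cap\Omega_j$ by a cut-off argument, and finally to split the regularity analysis into the outer free boundary $\partial\{u>0\}$ (handled by Alt--Caffarelli theory) and the interior interface $\partial\Omega_1\cap\partial\Omega_2$ (handled by weighted minimal-surface theory), gluing the two statements at the contact set via a blow-up analysis.

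\textbf{Existence.} For a minimizing sequence $(u_n,\Omega_{1,n},\Omega_{2,n})$, the Dirichlet term together with $u_n-g\in H^1_0(D)$ and $u_n\ge 0$ gives weak $H^1(\R^d)$-compactness, and truncation by $\|g\|_\infty$ yields a uniform $L^\infty$ bound. The functional provides no a priori control on the perimeters of $\Omega_{j,n}$, but it does control the weighted interface integral $\int u_n^2\,d\HH^{d-1}$. The natural compactness device is to work with the $SBV$ function $v_n=u_n(\ind_{\Omega_{1,n}}-\ind_{\Omega_{2,n}})$, preceded by the trimming $\Omega_{j,n}\mapsto\Omega_{j,n}\cap\{u_n>0\}$, so that $\nabla v_n=\nabla u_n$ a.e., the jump set of $v_n$ lies $\HH^{d-1}$-a.e.\ in $\partial^\ast\Omega_{1,n}\cap\partial^\ast\Omega_{2,n}$, and the interface integral is a constant multiple of $\int_{J_{v_n}}(v_n^+)^2\,d\HH^{d-1}$. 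Ambrosio's $SBV$ compactness then extracts a limit triple, and lower semicontinuity of all three terms of $J_{\beta,\Lambda}$ produces a minimizer.

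\textbf{Reduction.} The sets $\widetilde\Omega_j:=\{u>0\}\cap\Omega_j$ contain $E_j$, since $u\equiv g\equiv 1$ there, are disjoint, and cover $\{u>0\}$; they inherit finite perimeter from $\{u>0\}$, whose local finite perimeter is standard for Alt--Caffarelli-type minimizers via non-degeneracy and density estimates. Since $u^2$ vanishes outside $\{u>0\}$, the interface integral is unchanged by the truncation, and the other two terms of $J_{\beta,\Lambda}$ depend only on $u$; hence the trimmed triple is again a minimizer.

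\textbf{Regularity and main obstacle.} After the reduction we may assume $\Omega_j\subset\{u>0\}$. For (i), at a point $x_0\in D\cap\partial\{u>0\}$ lying away from the interface, any competitor $\tilde u$ supported in a small ball around $x_0$ can be paired with the enlarged set $\Omega_1\cup(\{\tilde u>0\}\setminus\{u>0\})$ without touching $\Omega_2$, so the interface term is unaffected and $u$ is locally an Alt--Caffarelli minimizer; $C^{1,\alpha}$-regularity follows from \cite{altcaf}. For (ii) inside $\{u>0\}$, fixing $u$ (harmonic on each $\Omega_j$) reduces the problem to minimising the weighted perimeter $\int u^2\,d\HH^{d-1}$ under disjointness and covering constraints; since the weight is smooth and positive, De~Giorgi's theory yields $C^{1,\alpha}$, and Schauder bootstrap using the transmission condition $|\nabla u_1|+|\nabla u_2|=\beta(u_1+u_2)$ upgrades this to $C^\infty$. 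The main obstacle will be the contact set $\partial\Omega_1\cap\partial\Omega_2\cap\partial\{u>0\}$, where neither argument applies directly: the plan is to blow up at a contact point, show that the only admissible blow-up limits are pairs of half-planes meeting a half-space boundary, rule out non-orthogonal openings by a first-variation computation on the opening angle, and transfer this rigidity from blow-ups to the original configuration via an epiperimetric or uniqueness-of-blow-up argument, thereby obtaining the $C^1$-regularity of the interface up to $\partial\{u>0\}$ together with the orthogonal contact.
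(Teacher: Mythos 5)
Your existence step has a genuine gap. Setting $v_n=u_n\big(\ind_{\Omega_{1,n}}-\ind_{\Omega_{2,n}}\big)$, on the jump set one has $v_n^+=-v_n^-=u_n$, so the interface energy controls exactly $\frac14\int_{J_{v_n}}|v_n^+-v_n^-|^2\,d\HH^{d-1}$. Ambrosio's $SBV$ compactness and closure theorems require a bound $\int_{J_{v_n}}\theta(|v_n^+-v_n^-|)\,d\HH^{d-1}\le C$ with $\theta(t)/t\to\infty$ as $t\to0^+$; here $\theta(t)=t^2$ behaves in exactly the opposite way, so the theorem does not apply. Concretely, a minimizing sequence can develop jump sets of unbounded $\HH^{d-1}$-measure concentrating where $u_n$ is small (near $\partial\{u_n>0\}$) at vanishing energetic cost; then $v_n$ need not be bounded in $BV$, and in the limit the jump part can degenerate into a Cantor part, destroying both the compactness and the lower semicontinuity of the interface term. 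This degeneracy of the weight, $u^2\sim\mathrm{dist}^2(\cdot,\partial\{u>0\})$, is the central difficulty of the problem: the paper circumvents it by first solving penalized problems $J_\eps=J_{\beta,\Lambda}+\eps\big(\mathrm{Per}(\Omega_1)+\mathrm{Per}(\Omega_2)\big)$, passing to the limit $\eps\to0$, and recovering the finiteness of the perimeter of $\Omega_1,\Omega_2$ only a posteriori, as a consequence of the full regularity theory (the interface meets $\partial\{u>0\}$ orthogonally at a locally finite set of points) combined with the non-collapsing estimate $u\ge t>0$ near $\overline E_1\cup\overline E_2$.

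The regularity part also leaves the hardest points unproved. Your argument for (i) yields Alt--Caffarelli minimality of $u$ only at free boundary points away from the interface, so the $C^{1,\alpha}$ regularity of $\partial\{u>0\}$ at contact points is not obtained; the paper instead proves an almost-minimality inequality with error $Cr^{d+1}$ valid at \emph{every} point of $\partial\{u>0\}$ (the interface contribution in $B_r$ is at most $\beta\int_{\partial B_r}u^2\,d\HH^{d-1}\le Cr^{d+1}$ by the Lipschitz growth of $u$ at the free boundary) and then applies the two-dimensional epiperimetric inequality of Spolaor--Velichkov. For (ii), the behavior at the contact set is only announced as a plan, and the essential missing idea is how to run any blow-up or monotonicity argument for a perimeter weighted by the degenerate function $u^2$. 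The paper does this by a conformal change of coordinates $(w,h)$ (with $h$ the harmonic replacement of $u$) which flattens $\partial\{u>0\}$ and turns the weight into $h^2$ times a bounded positive H\"older factor, and then by revolving the half-plane picture in $\R^4$ so that $\int h^2\,d\HH^1$ becomes an ordinary (almost-)perimeter of an axially symmetric set; blow-up limits are then area-minimizing cones in $\R^4$, hence planes orthogonal to the axis, which gives both the $C^1$ regularity up to $\partial\{u>0\}$ and the orthogonality. Without some substitute for this device your first-variation computation on the opening angle has no framework in which to run. Note finally that the regularity statements are actually established only for $d=2$.
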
	 

\subsection*{Sketch of the proof and plan of the paper} In order to prove \cref{t:main}, we first introduce a family of approximating problems in \cref{s:existence-approx}. Then, passing to the limit, we obtain a function $u\in\mathcal V$ and a couple of disjoint sets $\Omega_1$ and $\Omega_2$. We cannot obtain immediately that $(u,\Omega_1,\Omega_2)$ is a solution to \eqref{problema}, since there is not a uniform bound on the perimeter of the approximating sets, so we do not a priori have that $\Omega_1$ and $\Omega_2$ are sets of locally finite perimeter in $D$. Instead, we are able to prove that $u$ satisfies an almost-minimality condition involving the one-phase Alt-Caffarelli functional, which allows to prove that the set $\{u>0\}$ is regular (\cref{t:regularity-free-boundary}). This solves the problem only in part because we only have that 
$${\{u>0\}}={\Omega_1\cup\Omega_2}\quad\text{in}\quad D.$$
We then show that the sets $\Omega_1$ and $\Omega_2$ are almost-minimizers of the perimeter in $\{u>0\}\cap D$, which implies that (in low dimension) the free interface $\partial\Omega_1\cup\partial\Omega_2$ is smooth in $D\cap\{u>0\}$. Thus, in order to prove that $\Omega_1$ and $\Omega_2$ have finite perimeter it is sufficient to study the behavior of the interface $\partial\Omega_1\cup\partial\Omega_2$ close to the free boundary $\partial\{u>0\}$ (see \cref{t:regularity-free-interface}). We show that $\Omega_1$ and $\Omega_2$ are minimizers in $\{u>0\}$ of a weighted perimeter functional, the weight being precisely the function $u^2$, which is $C^{0,\alpha}$ and positive in $\{u>0\}$, but as it approaches the free boundary $\partial\{u>0\}$ we have that 
$$u^2(x)\sim \text{dist}^2\big(x,\partial\{u>0\}\big).$$ 
In order to deal with this degenerate weight, we perform a 2D conformal change of coordinates, which flattens $\partial\{u>0\}$ to a line; then we rotate $\Omega_1$ around this line in order to obtain an almost-minimizer of the perimeter in $\R^4$. This allows to conclude that $\partial\Omega_1\cup\partial\Omega_2$ is the union of $C^{1}$ curves that meet $\partial\{u>0\}$ orthogonally in a (locally) finite number of points. This concludes the proof of \cref{t:regularity-free-interface} and also shows that $\Omega_1$ and $\Omega_2$ have locally finite perimeter. In order to show that $\Omega_1$ and $\Omega_2$ have finite perimeter, in \cref{p:non-collapsing} we prove that $\{u>0\}$ contains strictly both $\overline E_1$ and $\overline E_2$. Then, in \cref{s:proof12} we show that $\Omega_1$ and $\Omega_2$ are actually minimizers of \eqref{problema} and we complete the proof of \cref{t:main}.

\section{Sets of finite perimeter and Sobolev functions}\label{s:preliminaries}

\subsection{Caccioppoli sets}
For any Lebesgue measurable set $\Omega\subset\R^d$, we define
$$\text{\rm Per}(\Omega):=\sup\Big\{\int_{\Omega}\text{\rm div}\, \xi(x)\,dx\ :\ \xi\in C^1_c(\R^d),\ \|\xi\|_{L^\infty(\R^d)}\le 1\Big\},$$
and we say that $\Omega$ is of finite perimeter (Caccioppoli set) if 
$$\text{\rm Per}\,(\Omega)<+\infty\,.$$
Given $\alpha\in[0,1]$, we say that the set $\Omega$ has a Lebesgue density $\alpha$ at $x\in\R^d$ if 
$$\lim_{r\to0}\frac{|\Omega\cap B_r(x)|}{|B_r|}=\alpha\,.$$
We define the set $\Omega^{(\alpha)}$ as 
$$\Omega^{(\alpha)}:=\Big\{x\in \R^d\ :\ \lim_{r\to0}\frac{|\Omega\cap B_r(x)|}{|B_r|}=\alpha \Big\}.$$
Given a set of finite perimeter $\Omega\subset\R^d$ we will denote by $\partial^\ast\Omega$ its reduced boundary and by $\nu_\Omega$ the generalized exterior normal. We recall that
$$\text{Per}(\Omega)=\HH^{d-1}(\partial^\ast\Omega),$$
and that for any $\xi\in C^1_c(\R^d)$
$$\int_{\Omega}\text{\rm div}\, \xi(x)\,dx=\int_{\partial^\ast\Omega} \xi\cdot\nu_\Omega\,d\HH^{d-1},$$
where $\HH^{d-1}$ denotes the $(d-1)$-dimensional Hausdorff measure in $\R^d$. Moreover, we recall that at every point of the reduced boundary, $\partial^\ast\Omega$ has Lebesgue density $\sfrac12$, that is,
$$\partial^\ast\Omega\subset\Omega^{(\sfrac12)}.$$
We also recall the following well-known result by Federer
$$\HH^{d-1}\Big(\R^d\setminus\big\{\Omega^{(0)}\cup\Omega^{(1)}\cup\partial^\ast\Omega\big\}\Big)=0,$$
which can also be stated as in the lemma below.
\begin{lemma}\label{l:federer}
If $\Omega$ is a set of finite perimeter in $\R^d$, then up to a set of zero $\HH^{d-1}$ measure
$$\Omega^{(\sfrac12)}=\partial^\ast\Omega\qquad\text{and}\qquad \Omega^{(0)}\cup \Omega^{(\sfrac12)}\cup \Omega^{(1)}=\R^d.$$	
\end{lemma}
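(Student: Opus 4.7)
The statement is essentially a bookkeeping consequence of the two facts quoted immediately before it in the paper: the inclusion $\partial^\ast\Omega\subset\Omega^{(\sfrac12)}$ (valid at every reduced-boundary point by definition of generalized exterior normal) and the Federer-type covering result
$$\HH^{d-1}\Big(\R^d\setminus\big(\Omega^{(0)}\cup\Omega^{(1)}\cup\partial^\ast\Omega\big)\Big)=0.$$
My plan is to combine these two facts, splitting the lemma into its two assertions.

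\smallskip
\noindent\textbf{First equality.} The inclusion $\partial^\ast\Omega\subset\Omega^{(\sfrac12)}$ is already at our disposal, so it only remains to handle $\Omega^{(\sfrac12)}\setminus\partial^\ast\Omega$. The three sets $\Omega^{(0)}$, $\Omega^{(\sfrac12)}$, $\Omega^{(1)}$ are pairwise disjoint because the Lebesgue density of $\Omega$ at a given point, when it exists, is a single number. Hence any point of $\Omega^{(\sfrac12)}$ lies outside $\Omega^{(0)}\cup\Omega^{(1)}$, and if it does not lie in $\partial^\ast\Omega$ either, then it belongs to the exceptional set of Federer's theorem. Thus
$$\Omega^{(\sfrac12)}\setminus\partial^\ast\Omega\ \subset\ \R^d\setminus\big(\Omega^{(0)}\cup\Omega^{(1)}\cup\partial^\ast\Omega\big),$$
and so has vanishing $\HH^{d-1}$ measure. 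This gives $\Omega^{(\sfrac12)}=\partial^\ast\Omega$ modulo $\HH^{d-1}$-null sets.

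\smallskip
\noindent\textbf{Second equality.} Using again $\partial^\ast\Omega\subset\Omega^{(\sfrac12)}$, we have
$$\R^d\setminus\big(\Omega^{(0)}\cup\Omega^{(\sfrac12)}\cup\Omega^{(1)}\big)\ \subset\ \R^d\setminus\big(\Omega^{(0)}\cup\partial^\ast\Omega\cup\Omega^{(1)}\big),$$
and Federer's theorem tells us that the right-hand side has zero $(d-1)$-dimensional Hausdorff measure, which gives the second identity.

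\smallskip
There is no genuine obstacle here: the whole content of the lemma is packaged in Federer's structure theorem, which is quoted in the paper and used as a black box. The only care needed is the trivial observation that the density values $0$, $\sfrac12$, $1$ are mutually exclusive, which is what lets the inclusion $\partial^\ast\Omega\subset\Omega^{(\sfrac12)}$ upgrade the covering statement into the equality $\Omega^{(\sfrac12)}=\partial^\ast\Omega$ modulo $\HH^{d-1}$.
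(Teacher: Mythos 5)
Your argument is correct and is precisely the paper's (implicit) reasoning: the lemma is stated there as an immediate reformulation of the quoted Federer covering theorem together with the inclusion $\partial^\ast\Omega\subset\Omega^{(\sfrac12)}$, and your two-step bookkeeping fills in exactly that deduction. Nothing further is needed.
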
	 

Finally, we conclude this section with the following proposition 
\begin{prop}
Let $A$ and $B$ be two disoint sets of finite perimeter in $\R^d$. Then,
\begin{align*}
\partial^\ast A=\Big(\partial^\ast A\cap \partial^\ast B\Big)\cup \Big(\partial^\ast A\setminus \partial^\ast B\Big)\qquad\text{and}\qquad 
\partial^\ast B=\Big(\partial^\ast A\cap \partial^\ast B\Big)\cup \Big(\partial^\ast B\setminus \partial^\ast A\Big),
\end{align*}
the set $A\cup B$ is a set of finite perimeter and, up to a set of zero $\HH^{d-1}$-measure, and
\begin{align}
\partial^\ast(A\cup B)=\Big(\partial^\ast A\setminus \partial^\ast B\Big)\cup\Big(\partial^\ast B\setminus \partial^\ast A\Big).\label{e:perimeterAB}
\end{align}
In particular,
\begin{align}
\text{\rm Per}(A)+\text{\rm Per}(B)=\text{\rm Per}(A\cup B)+2\,\HH^{d-1}\big(\partial^\ast A\cap \partial^\ast B\big).\label{e:perimeter-decomposition}
\end{align}	
\end{prop}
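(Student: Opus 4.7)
The first two set identities are purely set-theoretic ($X=(X\cap Y)\cup(X\setminus Y)$), so the real content lies in the boundary description \eqref{e:perimeterAB}, the finiteness of $\text{\rm Per}(A\cup B)$, and the decomposition \eqref{e:perimeter-decomposition}. My plan is to obtain \eqref{e:perimeterAB} by a density analysis via Federer's lemma (\cref{l:federer}), and then read off the other two statements by bookkeeping with the $(d-1)$-dimensional Hausdorff measure.

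The key point is that, since $A\cap B=\emptyset$ Lebesgue-a.e., the densities of $A$ and $B$ add: at any point $x$ where both limits exist, $A\cup B$ has density equal to the sum. I then split $\partial^\ast A$ according to whether $x\in\partial^\ast B$ or not. First, if $x\in\partial^\ast A\cap\partial^\ast B$, then $x\in A^{(\sfrac12)}\cap B^{(\sfrac12)}\subset(A\cup B)^{(1)}$, so such points are excluded from $\partial^\ast(A\cup B)$ (up to $\HH^{d-1}$-null sets, by \cref{l:federer} applied to $A\cup B$). If instead $x\in\partial^\ast A\setminus\partial^\ast B$, then \cref{l:federer} applied to $B$ forces $x\in B^{(0)}\cup B^{(1)}$ up to a null set, and the option $x\in B^{(1)}$ is incompatible with $x\in A^{(\sfrac12)}$ together with disjointness (the densities would sum to $\sfrac32$); hence $x\in B^{(0)}$, whence $x\in(A\cup B)^{(\sfrac12)}$ and in fact $x\in\partial^\ast(A\cup B)$ with the same generalized exterior normal as $A$. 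The case $x\in\partial^\ast B\setminus\partial^\ast A$ is symmetric. A final application of \cref{l:federer} to $A\cup B$ shows that these three classes are exhaustive for $\partial^\ast(A\cup B)$ up to null sets, which, together with the disjointness of $(\partial^\ast A\setminus\partial^\ast B)$ and $(\partial^\ast B\setminus\partial^\ast A)$, yields \eqref{e:perimeterAB}.

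Finiteness of $\text{\rm Per}(A\cup B)$ is then immediate from \eqref{e:perimeterAB}:
$$\text{\rm Per}(A\cup B)=\HH^{d-1}\big(\partial^\ast(A\cup B)\big)\le \HH^{d-1}(\partial^\ast A)+\HH^{d-1}(\partial^\ast B)=\text{\rm Per}(A)+\text{\rm Per}(B)<+\infty.$$
For \eqref{e:perimeter-decomposition} I simply add the two trivial splittings $\HH^{d-1}(\partial^\ast A)=\HH^{d-1}(\partial^\ast A\cap\partial^\ast B)+\HH^{d-1}(\partial^\ast A\setminus\partial^\ast B)$ and its analogue for $B$, and then substitute \eqref{e:perimeterAB} to rewrite the sum of the two ``$\setminus$'' terms as $\HH^{d-1}(\partial^\ast(A\cup B))=\text{\rm Per}(A\cup B)$. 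The only non-routine step in the whole argument is the inclusion $\partial^\ast A\setminus\partial^\ast B\subset\partial^\ast(A\cup B)$ modulo $\HH^{d-1}$-null sets, which is exactly where the disjointness of $A$ and $B$ (through the additivity of densities) and Federer's lemma come into play.
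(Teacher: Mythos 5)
Your argument is correct and follows essentially the same route as the paper's: both rest on Federer's lemma (\cref{l:federer}) together with the additivity of Lebesgue densities for the disjoint sets $A$ and $B$, which rules out $A^{(\sfrac12)}\cap B^{(1)}$ and identifies $\partial^\ast(A\cup B)$ with $\big(A^{(\sfrac12)}\cap B^{(0)}\big)\cup\big(B^{(\sfrac12)}\cap A^{(0)}\big)$ up to $\HH^{d-1}$-null sets. The one thing to reorder is the finiteness of $\text{\rm Per}(A\cup B)$: you invoke \cref{l:federer} for $A\cup B$ before knowing it has finite perimeter and then deduce finiteness from \eqref{e:perimeterAB}, whereas it should be established first, directly from the definition, since $\ind_{A\cup B}=\ind_A+\ind_B$ a.e.\ gives $\int_{A\cup B}\Div\,\xi\,dx\le \text{\rm Per}(A)+\text{\rm Per}(B)$ for every admissible $\xi$.
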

\begin{proof}
Up to a set of zero $\HH^{d-1}$ measure, we have that 
$$\partial^\ast A\setminus \partial^\ast B=A^{(\sfrac12)}\cap\Big(B^{(0)}\cup B^{(1)}\Big)=A^{(\sfrac12)}\cap B^{(0)}.$$
Analogously, $\partial^\ast B\setminus \partial^\ast A=B^{(\sfrac12)}\cap A^{(0)}.$
On the other hand
$$\partial^{\ast}(A\cup B)=(A\cup B)^{(\sfrac12)}=\Big(A^{(\sfrac12)}\cap B^{(0)}\Big)\cup\Big(B^{(\sfrac12)}\cap A^{(0)}\Big),$$
which proves \eqref{e:perimeterAB}. Finally, \eqref{e:perimeter-decomposition} follows since the sets
$$\partial^\ast A\setminus \partial^\ast B\ ,\quad \partial^\ast B\setminus \partial^\ast A\quad\text{and}\quad \partial^\ast A\cap \partial^\ast B\ ,$$ 
are disjoint.
\end{proof}		

As a consequence of \cref{l:federer}, one can obtain the following decomposition.
\begin{prop}\label{p:decompositions}
Let $A$ and $B$ be two sets of finite perimeter in $\R^d$. Then, also $A\setminus B$ and $B\setminus A$ have finite perimeter and we have the following decompositions (up to sets of zero $\HH^{d-1}$ measure) of $\partial^\ast A$, $\partial^\ast B$, $\partial^\ast(A\setminus B)$ and $\partial^\ast(B\setminus A)$ into disjoint sets:
	\begin{align*}
	\partial^\ast A&=\Big(A^{(\sfrac12)}\cap B^{(0)}\Big)\cup \Big(A^{(\sfrac12)}\cap B^{(1)}\Big)\cup \Big(A^{(\sfrac12)}\cap B^{(\sfrac12)}\Big)\\
	\partial^\ast B&=\Big(B^{(\sfrac12)}\cap A^{(0)}\Big)\cup \Big(B^{(\sfrac12)}\cap A^{(1)}\Big)\cup \Big(B^{(\sfrac12)}\cap A^{(\sfrac12)}\Big)\\
	\partial^\ast (A\setminus B)&=\Big(A^{(1)}\cap B^{(\sfrac12)}\Big)\cup \Big(A^{(\sfrac12)}\cap B^{(0)}\Big)\cup \Big(A^{(\sfrac12)}\cap B^{(\sfrac12)}\cap (A\cup B)^{(1)}\Big)\\
	\partial^\ast (B\setminus A)&=\Big(B^{(1)}\cap A^{(\sfrac12)}\Big)\cup \Big(B^{(\sfrac12)}\cap A^{(0)}\Big)\cup \Big(B^{(\sfrac12)}\cap A^{(\sfrac12)}\cap (A\cup B)^{(1)}\Big).
	\end{align*}
\end{prop}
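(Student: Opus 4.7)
The plan is to reduce everything to Federer's theorem (\cref{l:federer}): up to $\HH^{d-1}$-null sets, for any Caccioppoli set $E$ one has $\partial^\ast E = E^{(\sfrac12)}$, and $\R^d = E^{(0)} \cup E^{(\sfrac12)} \cup E^{(1)}$. Applied to $B$, this immediately yields the decomposition of $\partial^\ast A = A^{(\sfrac12)}$ by intersecting with the three density classes of $B$, and the three pieces are disjoint because a set cannot have two distinct Lebesgue densities at the same point. The decomposition of $\partial^\ast B$ is symmetric. The first claim of the proposition---that $A\setminus B$ and $B\setminus A$ have finite perimeter---is standard BV calculus: writing $\chi_{A\setminus B}=\chi_A-\chi_{A\cap B}$ and using that the intersection of two Caccioppoli sets is Caccioppoli (or the chain rule applied to $\min\{\chi_A,1-\chi_B\}$) suffices.

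Once $A\setminus B$ is known to be Caccioppoli, applying Federer a second time shows $\partial^\ast(A\setminus B) = (A\setminus B)^{(\sfrac12)}$ up to $\HH^{d-1}$-null sets. So the task reduces to classifying the Lebesgue density of $A\setminus B$ at each point of the nine cells $A^{(i)}\cap B^{(j)}$ for $i,j\in\{0,\sfrac12,1\}$, which partition $\R^d$ up to a $\HH^{d-1}$-null set. For the seven cells with $(i,j)\ne(\sfrac12,\sfrac12)$, the two-sided estimate
$$|A\cap B_r(x)|-|B\cap B_r(x)|\le|(A\setminus B)\cap B_r(x)|\le |A\cap B_r(x)|,$$
together with $|(A\setminus B)\cap B_r(x)|\le|B_r\setminus B|$ when $x\in B^{(1)}$, pins down the density of $A\setminus B$ uniquely: density $0$ if $i=0$ or $(i,j)=(1,1)$; density $\sfrac12$ if $(i,j)\in\{(\sfrac12,0),(1,\sfrac12)\}$; density $1$ if $(i,j)=(1,0)$. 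The only delicate cell is $A^{(\sfrac12)}\cap B^{(\sfrac12)}$: there I use the identity $|A\setminus B|=|A\cup B|-|B|$, so the density of $A\setminus B$ at $x$ equals the density of $A\cup B$ minus $\sfrac12$. Applying Federer once more to the Caccioppoli set $A\cup B$ (already used in \eqref{e:perimeterAB}) forces this density to be either $\sfrac12$ or $1$ at $\HH^{d-1}$-a.e.\ such $x$, that is, $x$ lies in $(A\cup B)^{(\sfrac12)}$ (giving density $0$ for $A\setminus B$) or $(A\cup B)^{(1)}$ (giving density $\sfrac12$); only the latter contributes to $\partial^\ast(A\setminus B)$.

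Collecting the cells producing density $\sfrac12$ yields exactly the three pieces in the stated decomposition of $\partial^\ast(A\setminus B)$, and they are pairwise disjoint because the nine cells themselves are. The decomposition of $\partial^\ast(B\setminus A)$ follows by swapping the roles of $A$ and $B$. The main obstacle is precisely the ambiguous cell $A^{(\sfrac12)}\cap B^{(\sfrac12)}$, which cannot be resolved from the densities of $A$ and $B$ alone and requires invoking Federer on the auxiliary set $A\cup B$; the rest is a careful bookkeeping of elementary volume inequalities.
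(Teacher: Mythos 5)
Your proof is correct and follows exactly the route the paper intends (the paper states this proposition as a direct consequence of \cref{l:federer} and omits the details): Federer's theorem applied to $A$, $B$, $A\setminus B$ and to the auxiliary set $A\cup B$, plus elementary volume inequalities to classify the density of $A\setminus B$ on each cell $A^{(i)}\cap B^{(j)}$, with the ambiguous cell $A^{(\sfrac12)}\cap B^{(\sfrac12)}$ resolved via $|(A\setminus B)\cap B_r|=|(A\cup B)\cap B_r|-|B\cap B_r|$. The only blemishes are cosmetic: there are eight (not seven) cells with $(i,j)\ne(\sfrac12,\sfrac12)$, and the cell $A^{(\sfrac12)}\cap B^{(1)}$ is missing from your enumeration of density-zero cases, though the inequality $|(A\setminus B)\cap B_r(x)|\le|B_r(x)\setminus B|$ you already state (and which is needed also for the cell $A^{(1)}\cap B^{(\sfrac12)}$, where it holds regardless of the density of $B$) disposes of it immediately.
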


\subsection{Sobolev functions and capacity}
Let $u:\R^d\to\R$ be a measurable function. We recall that $u$ is a Sobolev function ($u\in H^1(\R^d)$), if $u\in L^2(\R^d)$ and $\nabla u\in L^2(\R^d;\R^d)$, where $\nabla u$ is the distributional gradient of $u$. Given a measurable set $\Omega\subset\R^d$ and a Sobolev function $u\in H^1(\R^d)$, we say that $u\in \widetilde H^1_0(\Omega)$ if 
$$u=0\quad\text{almost everywhere on}\quad \R^d\setminus \Omega.$$
If $\Omega$ is an open set, we can also define the space $H^1_0(\Omega)$ as the closure of $C^\infty_c(\Omega)$ with respect to the Sobolev norm $$\|u\|_{H^1}:=\Big(\|u\|_{L^2}^2+\|\nabla u\|_{L^2}^2\Big)^{\sfrac12}.$$
It is well-known that both $H^1_0(\Omega)$ and $\widetilde H^1_0(\Omega)$ are closed (with respect to both the strong and the weak $H^1$-convergence) linear subspaces of $H^1(\R^d)$ and that, for any open set $\Omega$, $H^1_0(\Omega)\subset\widetilde H^1_0(\Omega)$, while the converse inclusion is in general false.  \medskip

Given any set $A\subset\R^d$ and any ball $B_{2R}(x_0)$, we define 
\begin{align*}
\cp\Big(A;B_{2R}(x_0)\Big):=\inf\Big\{&\int|\nabla\varphi|^2\,dx\ :\ \varphi\in H^1_0(B_{2R}(x_0)),\\
&\qquad \varphi\ge 1\ \text{in a neighborhood of}\  B_{R}(x_0)\cap A\Big\}.
\end{align*}
We say that a set $A$ has zero capacity if 
$$\cp\Big(A;B_{2R}(x_0)\Big)=0\qquad\text{for every ball}\qquad B_{2R}(x_0)\subset\R^d.$$
We recall the following properties of the capacity.
\begin{itemize}
\item If a set $A\subset\R^d$ has zero capacity, then $|A|=0$ and $\HH^{d-1}(A)=0$.
\item Given $u\in H^1(\R^d)$, there exists a set of zero capacity $\mathcal N_u$ such that 
$$\lim_{r\to0}\frac{1}{|B_r(x_0)|}\int_{B_r(x_0)}u(x)\,dx\quad\text{exists}\quad  \text{for every}\quad x_0\in\R^d\setminus\mathcal N_u\,.$$
In particular, to every $u\in H^1(\R^d)$, we can associate a representative
$$\widetilde u:\R^d\to\R\,$$ 
defined pointwise everywhere as follows:
$$\widetilde u(x_0):=\lim_{r\to0}\frac{1}{|B_r(x_0)|}\int_{B_r(x_0)}u(x)\,dx\quad\text{if}\quad x_0\in\R^d\setminus\mathcal N_u\ ,$$
while $\widetilde u(x_0)=0$ if $x_0\in\mathcal N_u$.
\item Suppose that a sequence $u_n\in H^1(\R^d)$ converges strongly in $H^1$ to $u\in H^1(\R^d)$. Let $\widetilde u_n$ and $\widetilde u$ be the representatives defined above and let $\mathcal N_{u_n}$ and $\mathcal N_u$ be the correspondig sets of zero capacity. Then, there are a subsequence $u_{n_k}$ and a set of zero capacity $\mathcal N$ such that 
$$\mathcal N_u\subset\mathcal N\qquad\text{and}\qquad\bigcup_{n\ge 1}\mathcal N_{u_n}\subset\mathcal N,$$
and 
\begin{equation}\label{e:qe-convergence}
\lim_{k\to\infty}\widetilde u_{n_k}(x)=\widetilde u(x)\quad\text{for every}\quad x\in\R^d\setminus \mathcal N.
\end{equation}
For simplicity, we will identify any function $u\in H^1(\R^d)$ with its representative $\widetilde u$ and if a sequence $u_{n_k}\in H^1(\R^d)$ satisfies \eqref{e:qe-convergence}, then we will say that it converges quasi-everywhere to $u\in H^1(\R^d)$.
\end{itemize}	

\subsection{Traces of Sobolev functions on the boundary of sets of finite perimeter}
Let $\Omega$ be a set of finite perimeter in $\R^d$ and let $u\in H^1(\R^d)$. Let $\widetilde u:\R^d\to\R$ be the representative of $u$ defined for every point $x_0$ outside a set of zero capacity $\mathcal N_u$ (and defined as zero on $\mathcal N_u$). Then, $\widetilde u$ is defined at every point of $\partial^\ast\Omega\setminus\mathcal N_u$. Since $\mathcal N_u$ has zero $\HH^{d-1}$-measure, we have that $\widetilde u$ is defined $\HH^{d-1}$-almost everywhere on $\partial^\ast\Omega$. We also notice that
$$\widetilde u:\partial^\ast\Omega\to\R$$
is a $\HH^{d-1}$ measurable function. Indeed, since $u\in H^1(\R^d)$ is a strong $H^1$ limit of a  sequence of $C^\infty$ functions, we have that $\widetilde u:\partial^\ast\Omega\to\R$ is a pointwise limit of smooth functions. From now on, we will write $u$ instead of $\widetilde u$. 

The next two propositions allow to write the functional $J_{\beta,\Lambda}$ in an equivalent way.

\begin{prop}\label{l:u=0}
Let	$\Omega\subset\R^d$ be a bounded quasi-open set of finite perimeter and let $\partial^\ast\Omega$ be its reduced boundary. Let $u\in \widetilde H^1_0(\Omega)$ and let $\widetilde u:\R^d\to\R$ be a representative of $u$ defined up to a set of zero capacity. Then, 
$$\widetilde u=0\quad\text{$\HH^{d-1}$-almost everywhere on}\quad \Omega^{(\sfrac12)}.$$
In particular, 
$$\widetilde u=0\quad\text{$\HH^{d-1}$-almost everywhere on}\quad \partial^\ast\Omega.$$
\end{prop}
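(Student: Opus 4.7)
I first invoke \cref{l:federer}: $\Omega^{(\sfrac12)}$ coincides with $\partial^\ast\Omega$ up to a set of zero $\HH^{d-1}$-measure, so the first assertion of the proposition follows from the second. Writing $u=u^+-u^-$ and noting that both $u^+$ and $u^-$ belong to $\widetilde H^1_0(\Omega)$ (with precise representatives $\widetilde{u^\pm}$ whose difference is $\widetilde u$), I may assume from the start that $u\ge 0$.

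\textbf{Main estimate.} Fix a point $x_0\in\partial^\ast\Omega\setminus\mathcal N_u$, so that $\widetilde u(x_0)$ is the limit of the Lebesgue averages of $u$ on $B_r(x_0)$. Since $u\equiv 0$ a.e.\ on $\R^d\setminus\Omega$, Cauchy--Schwarz gives
$$\frac{1}{|B_r|}\int_{B_r(x_0)} u\,dy \,=\, \frac{1}{|B_r|}\int_{B_r(x_0)\cap\Omega} u\,dy \,\le\, \left(\frac{|\Omega\cap B_r(x_0)|}{|B_r|}\right)^{\sfrac12}\left(\frac{1}{|B_r|}\int_{B_r(x_0)} u^2\,dy\right)^{\sfrac12}.$$
As $r\to 0$, the left-hand side tends to $\widetilde u(x_0)$ and the density factor on the right tends to $1/\sqrt 2$ because $x_0\in\partial^\ast\Omega\subset\Omega^{(\sfrac12)}$. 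Provided the remaining $L^2$-average also converges to $\widetilde u(x_0)^2$, the inequality collapses to $\widetilde u(x_0)\le\widetilde u(x_0)/\sqrt 2$, forcing $\widetilde u(x_0)=0$.

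\textbf{The hard part.} The nontrivial ingredient is the strong $L^2$-Lebesgue property
$$\lim_{r\to 0}\frac{1}{|B_r|}\int_{B_r(x_0)}\lvert u(y)-\widetilde u(x_0)\rvert^2\,dy \,=\, 0$$
at $\HH^{d-1}$-a.e.\ $x_0$. This does not follow from the classical Lebesgue differentiation theorem, but from the fine theory of Sobolev functions: the exceptional set has zero $W^{1,2}$-capacity, and any set of zero $W^{1,2}$-capacity in $\R^d$ has zero $\HH^{d-2+\varepsilon}$-measure for every $\varepsilon>0$ (see Evans--Gariepy, \emph{Measure Theory and Fine Properties of Functions}), and in particular zero $\HH^{d-1}$-measure. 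Alternatively, one can bypass this entirely by invoking the two-sided trace of $u\in H^1(\R^d)$ along the reduced boundary $\partial^\ast\Omega$: the exterior trace vanishes because $u\equiv 0$ on $\R^d\setminus\Omega$, the interior and exterior traces agree $\HH^{d-1}$-a.e.\ as $u$ has no jump across $\partial^\ast\Omega$, and this common value coincides with $\widetilde u$.
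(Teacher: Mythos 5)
Your proof is correct, but it follows a genuinely different route from the paper's. The paper proves the statement by approximating $u$ with the minimizers $u_n$ of the penalized functionals $v\mapsto\int_\Omega|\nabla v|^2+n\int_\Omega|v-u|^2$ over $H^1_0(\Omega)$; these solve $-\Delta u_n=n(u-u_n)$ with bounded right-hand side, so a decay estimate from \cite{DV} forces $u_n=0$ at every point of density $\sfrac12$, and the conclusion is transferred to $u$ via strong $H^1$-convergence and pointwise quasi-everywhere convergence of a subsequence. Your argument replaces all of this PDE machinery with a single Cauchy--Schwarz inequality at an $L^2$-Lebesgue point: the density factor $\big(|\Omega\cap B_r|/|B_r|\big)^{\sfrac12}\to\sfrac1{\sqrt2}<1$ turns the inequality into $\widetilde u(x_0)\le\widetilde u(x_0)/\sqrt2$, which is the whole point. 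The price is that you must know that the $L^2$-Lebesgue point property fails only on a set of zero $2$-capacity, hence of zero $\HH^{d-1}$-measure; you cite the right results, and this is exactly the same capacity-versus-$\HH^{d-1}$ dictionary the paper already uses in Section 2, so nothing foreign is imported. What your approach buys: it is shorter, needs no quasi-openness of $\Omega$ (only measurability and the density condition), and in fact shows $\widetilde u=0$ $\HH^{d-1}$-a.e.\ on $\Omega^{(\alpha)}$ for every $\alpha<1$; also the reduction to $\partial^\ast\Omega$ via \cref{l:federer} is harmless but unnecessary, since the estimate works directly on $\Omega^{(\sfrac12)}$. Two small caveats: the Evans--Gariepy statement of the $L^{p^\ast}$-Lebesgue point property is given for $p<d$, so in the borderline case $d=2$ you should note that the analogous result (Lebesgue points in every $L^q$, $q<\infty$, outside a set of zero capacity) still holds; and your alternative argument via two-sided traces on $\partial^\ast\Omega$ is correct in spirit but stated loosely -- the clean statement is that an $H^1$ function is approximately continuous outside a set of zero $2$-capacity, so its approximate limit at $\HH^{d-1}$-a.e.\ point of $\partial^\ast\Omega$ is forced to be $0$ by the exterior half of the blow-up. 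Either way, the proof stands.
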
	
\begin{proof}
Without loss of generality, we can suppose that 
$$0\le u\le 1.$$
For every $n\ge 1$, we consider the functional 
$$F_n:H^1_0(\Omega)\to\R\ ,\qquad F_n(v)=\int_{\Omega}|\nabla v|^2\,dx+n\int_{\Omega}|v-u|^2\,dx.$$
The functional $F_n$ admits a unique minimizer in $H^1_0(\Omega)$ that we denote by $u_n$. By construction, testing the optimality of $u_n$ with $v=u$, we get 
$$\int_{\Omega}|\nabla u_n|^2\,dx+n\int_{\Omega}|u_n-u|^2\,dx\le \int_{\Omega}|\nabla u|^2\,dx.$$
In particular, the sequence $u_n$ converges strongly $L^2(\Omega)$ and weakly in $H^1_0(\Omega)$ to the function $u$. Moreover, $u_n$ solves the PDE 
\begin{equation}\label{e:u_n}
-\Delta u_n=n(u-u_n)\quad\text{in}\quad\Omega\ ,\qquad u\in H^1_0(\Omega).
\end{equation}
We notice that since $u_n$	minimizes $F_n$ and since $0\le u\le 1$, then also 
$$0\le u_n\le 1.$$
Thus, the right-hand side $n(u-u_n)$ of \eqref{e:u_n} is bounded. Let now $x_0\in\Omega^{(\sfrac12)}$, that is,  
$$\lim_{r\to0}\frac{|B_r(x_0)\cap \Omega|}{|B_r(x_0)|}=\frac12.$$
By \cite[Proposition 4.6]{DV} we have that for $r>0$ small enough
$$\|u_n\|_{L^\infty(B_r(x_0)}\le r^\beta C_n,$$
for some constant $C_n$ depending on $u_n$ and some dimensional constant $\beta>0.$
In particular, 
$$u_n=0\quad\text{on}\quad \Omega^{(\sfrac12)}.$$
Now, since 
$$\int_{\Omega}|\nabla u_n|^2\,dx\le \int_{\Omega}|\nabla u|^2\,dx\qquad\text{for every}\qquad n\ge 1\ ,$$
we get that the convergence of $u_n$ to $u$ is strong in $H^1_0(\Omega)$. It is well-known that there is a subsequence of $u_n$ converging pointwise quasi-everywhere to $u$. In particular, the same subsequence converges pointwise $\HH^{d-1}$-almost
everywhere on $\partial^\ast\Omega$. Thus, (the representative of) $u$ vanishes $\HH^{d-1}$-almost everywhere on both $\Omega^{(\sfrac12)}$ and $\partial^\ast\Omega$.
\end{proof}

\begin{prop}
Let $\Omega_1$ and $\Omega_2$ be two sets of finite perimeter in $\R^d$ such that 
$$|\Omega_1\cap\Omega_2|=0.$$
Then, for any function $u\in \widetilde H^1_0(\Omega_1\cup\Omega_2)\cap L^\infty(\Omega_1\cup\Omega_2)$, we have that 
\begin{equation}\label{e:cannoli1}
\int_{\partial^\ast\Omega_1\cap \partial^\ast \Omega_2}u^2\,d\HH^{d-1}= \frac12\left(	\int_{\partial^\ast\Omega_1}u^2\, d\HH^{d-1}+\int_{\partial^\ast\Omega_2}u^2\, d\HH^{d-1}\right).
\end{equation}	
\end{prop}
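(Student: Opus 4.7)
The identity should follow by combining the boundary decomposition from the previous proposition with the vanishing of the trace of $u$ on the ``exterior'' pieces of the reduced boundaries. Concretely, the strategy has three steps: a decomposition of $\partial^\ast\Omega_1$ and $\partial^\ast\Omega_2$ into a common interface piece and an exterior piece; a proof that $u$ vanishes $\HH^{d-1}$-a.e.\ on the exterior pieces; and finally, adding the two resulting identities.

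For the first step, since $\Omega_1$ and $\Omega_2$ are disjoint (up to a Lebesgue-null set), the previous proposition gives, up to an $\HH^{d-1}$-null set,
\begin{equation*}
\partial^\ast\Omega_1 = \bigl(\partial^\ast\Omega_1\cap\partial^\ast\Omega_2\bigr)\cup\bigl(\partial^\ast\Omega_1\setminus\partial^\ast\Omega_2\bigr),\qquad
\partial^\ast\Omega_2 = \bigl(\partial^\ast\Omega_1\cap\partial^\ast\Omega_2\bigr)\cup\bigl(\partial^\ast\Omega_2\setminus\partial^\ast\Omega_1\bigr),
\end{equation*}
and moreover $\Omega_1\cup\Omega_2$ has finite perimeter with
$$\partial^\ast(\Omega_1\cup\Omega_2) = \bigl(\partial^\ast\Omega_1\setminus\partial^\ast\Omega_2\bigr)\cup\bigl(\partial^\ast\Omega_2\setminus\partial^\ast\Omega_1\bigr),$$
again up to $\HH^{d-1}$-negligible sets. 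The three pieces $\partial^\ast\Omega_1\cap\partial^\ast\Omega_2$, $\partial^\ast\Omega_1\setminus\partial^\ast\Omega_2$, $\partial^\ast\Omega_2\setminus\partial^\ast\Omega_1$ are mutually disjoint, so the integrals will split without any double-counting.

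For the second step, by hypothesis $u\in\widetilde H^1_0(\Omega_1\cup\Omega_2)$, i.e.\ $u$ vanishes a.e.\ outside $\Omega_1\cup\Omega_2$. I would apply \cref{l:u=0} to the set $\Omega_1\cup\Omega_2$ (which is of finite perimeter by the previous step) to conclude that the quasi-continuous representative $\widetilde u$ vanishes $\HH^{d-1}$-a.e.\ on $(\Omega_1\cup\Omega_2)^{(1/2)}=\partial^\ast(\Omega_1\cup\Omega_2)$, hence $\HH^{d-1}$-a.e.\ on both exterior pieces $\partial^\ast\Omega_1\setminus\partial^\ast\Omega_2$ and $\partial^\ast\Omega_2\setminus\partial^\ast\Omega_1$. (The essential boundedness $u\in L^\infty$ plus the decomposition make the surface integrals finite, so there are no integrability issues.) The only minor technical point is that \cref{l:u=0} is stated for \emph{bounded quasi-open} sets of finite perimeter, whereas $\Omega_1\cup\Omega_2$ is merely a set of finite perimeter; this can be handled either by a standard reduction (intersect with a large ball and with a quasi-open superset such as $\{|u|>0\}\cup\Omega_1\cup\Omega_2$, which is quasi-open since $\{|u|>0\}$ is), or by redoing the short approximation argument of \cref{l:u=0} directly on $\Omega_1\cup\Omega_2$; this is the one place where a small amount of care is required, but no new idea.

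For the third step, combining the decomposition of $\partial^\ast\Omega_j$ with the vanishing of $u$ on the exterior pieces yields, for $j=1,2$,
$$\int_{\partial^\ast\Omega_j}u^2\,d\HH^{d-1}=\int_{\partial^\ast\Omega_1\cap\partial^\ast\Omega_2}u^2\,d\HH^{d-1}.$$
Summing the two identities and dividing by $2$ gives precisely \eqref{e:cannoli1}.
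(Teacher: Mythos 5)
Your proof is correct and follows essentially the same route as the paper's: decompose $\partial^\ast\Omega_1$ and $\partial^\ast\Omega_2$ into the common interface and the exterior pieces, show that $u$ vanishes $\HH^{d-1}$-a.e.\ on the exterior pieces by applying \cref{l:u=0} at points of density $\sfrac12$ for $\Omega_1\cup\Omega_2$, and sum. The paper runs the second step pointwise (showing via Federer that each $x_0\in\partial^\ast\Omega_1\setminus\partial^\ast\Omega_2$ lies in $(\Omega_1\cup\Omega_2)^{(\sfrac12)}$) and, like you, does not verify the bounded quasi-open hypothesis of \cref{l:u=0} for $\Omega_1\cup\Omega_2$, so your remark on that technical point is a fair observation rather than a gap.
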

\begin{proof}
We first notice that the reduced boundaries $\partial^\ast\Omega_1$ and $\partial^\ast\Omega_2$ can be decomposed as
$$\partial^\ast\Omega_1=\Big(\partial^\ast\Omega_1\cap\partial^\ast\Omega_2\Big)\cup \Big(\partial^\ast\Omega_1\setminus\partial^\ast\Omega_2\Big)\quad\text{and}\quad\partial^\ast\Omega_2=\Big(\partial^\ast\Omega_1\cap\partial^\ast\Omega_2\Big)\cup \Big(\partial^\ast\Omega_2\setminus\partial^\ast\Omega_1\Big).$$
Thus, in order to prove \eqref{e:cannoli1}, it is sufficient to prove that 
\begin{equation}\label{e:cannoli2}
u=0\qquad\text{$\HH^{d-1}$-almost everywhere on}\qquad \big(\partial^\ast\Omega_1\setminus \partial^\ast \Omega_2\big)\cup \big(\partial^\ast \Omega_2\setminus \partial^\ast \Omega_1\big).
\end{equation}
Let $x_0\in \partial^\ast\Omega_1\setminus \partial^\ast \Omega_2$. Since $$\HH^{d-1}\Big(\R^d\setminus\big(\Omega_2^{(0)}\cup \Omega_2^{(1)}\cup\partial^\ast\Omega_2\big)\Big)=0,$$ 
we can suppose that $x_0\in \Omega_2^{(0)}\cup \Omega_2^{(1)}$, but since $x_0\in\Omega_1^{(\sfrac12)}$ and $\Omega_1\cap\Omega_2=0$, we get that necessarily $x_0\in\Omega_2^{(0)}$. But then, 
$$x_0\in (\Omega_1\cup\Omega_2)^{(\sfrac12)}.$$
Thus, by Proposition \ref{l:u=0}, we get that $u(x_0)=0$. This proves \eqref{e:cannoli2} and \eqref{e:cannoli1}.
\end{proof}

\subsection{A semicontinuity lemma}

In the proof of the main theorem we will repeatedly use the following lemma, which is a restatement of a lemma from \cite{GLMV}.
\begin{lemma}[\cite{GLMV}]\label{l:semicontinuity}
	Let $A\subset\R^d$ be a bounded open set. Let $u_n\in H^1(A)$ be a sequence of functions converging to $u_\infty\in H^1(A)$ weakly in $H^1(A)$, strongly in $L^2(A)$ and pointwise almost-everywhere. Let $\Omega_n\subset A$ be a sequence of sets of locally finite perimeter in $A$ converging almost-everywhere (in $A$) to the set of locally finite perimeter $\Omega_\infty\subset A$.
	Then, 
	\begin{equation}\label{e:semicontinuity}
	\int_{A\cap\partial^\ast\Omega_\infty}u_\infty^2\,d\HH^{d-1}\le \liminf_{n\to\infty} \int_{A\cap\partial^\ast\Omega_n}u_n^2\,d\HH^{d-1}.
	\end{equation}
\end{lemma}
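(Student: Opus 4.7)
The plan is to represent the surface integral by duality and transfer it to a volume integral on $\Omega$ via the Gauss-Green formula for sets of finite perimeter, where the convergence hypotheses on $u_n$ and $\Omega_n$ can be exploited directly. A preliminary truncation step reduces matters to the case of uniformly bounded functions: I apply the argument below to $u_n^M:=(u_n\wedge M)\vee(-M)$, which still satisfies all the convergence assumptions and is uniformly bounded by $M$ in $L^\infty(A)$, and then let $M\to\infty$ using monotone convergence on the limit side together with the trivial bound $|u_n^M|\le|u_n|$ on the liminf side. Hence I may assume throughout that $u_n$ and $u_\infty$ are uniformly bounded in $L^\infty(A)$.

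Fix $\xi\in C^1_c(A;\R^d)$ with $|\xi|\le 1$ pointwise. Applying the generalized divergence theorem to the compactly supported Sobolev vector field $u_n^2\xi$ on the Caccioppoli set $\Omega_n$, I obtain
$$\int_{A\cap\partial^\ast\Omega_n}u_n^2\,\xi\cdot\nu_{\Omega_n}\,d\HH^{d-1}=\int_{A\cap\Omega_n}\Bigl(u_n^2\,\Div\xi+2u_n\,\nabla u_n\cdot\xi\Bigr)\,dx.$$
I then pass to the limit $n\to\infty$ on the right-hand side. For the first summand, $\ind_{\Omega_n}\to\ind_{\Omega_\infty}$ a.e.\ and $u_n^2\to u_\infty^2$ strongly in $L^1(A)$ (a consequence of the strong $L^2$-convergence $u_n\to u_\infty$ and the uniform $L^\infty$-bound), so dominated convergence yields the limit $\int_{\Omega_\infty}u_\infty^2\,\Div\xi\,dx$. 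For the second summand, I rewrite the integrand as $(u_n\,\ind_{\Omega_n}\,\xi)\cdot\nabla u_n$: the factor $u_n\,\ind_{\Omega_n}\,\xi$ converges strongly in $L^2(A;\R^d)$ to $u_\infty\,\ind_{\Omega_\infty}\,\xi$ (using the uniform $L^\infty$-bound on $u_n$, the a.e.\ convergence of $\ind_{\Omega_n}$, and the strong $L^2$-convergence of $u_n$), while $\nabla u_n\rightharpoonup\nabla u_\infty$ weakly in $L^2(A;\R^d)$, so the weak/strong pairing closes the limit.

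Applying the Gauss-Green identity in the reverse direction to $u_\infty^2\xi$ on $\Omega_\infty$ converts the resulting volume integral back into a surface integral, and yields
$$\liminf_{n\to\infty}\int_{A\cap\partial^\ast\Omega_n}u_n^2\,d\HH^{d-1}\ge\int_{A\cap\partial^\ast\Omega_\infty}u_\infty^2\,\xi\cdot\nu_{\Omega_\infty}\,d\HH^{d-1}.$$
Taking the supremum on the right-hand side over all admissible $\xi$ and invoking the standard duality
$$\int_{A\cap\partial^\ast\Omega_\infty}u_\infty^2\,d\HH^{d-1}=\sup_{\xi}\int_{A\cap\partial^\ast\Omega_\infty}u_\infty^2\,\xi\cdot\nu_{\Omega_\infty}\,d\HH^{d-1}$$
finishes the bounded case; the general statement then follows from the truncation reduction above.

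The main delicate step is the passage to the limit in the gradient term: only weak $H^1$-convergence of $u_n$ is available, so the companion factor $u_n\,\ind_{\Omega_n}\,\xi$ must be shown to converge strongly in $L^2$. This is precisely where the combination of strong $L^2$-convergence of $u_n$, a.e.\ convergence of $\ind_{\Omega_n}$, and the uniform $L^\infty$-bound enters in an essential way; without any one of these ingredients the weak/strong pairing trick breaks down.
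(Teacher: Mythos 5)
Your proof is correct and follows essentially the same route as the paper's: represent the surface integral by duality against vector fields $\xi\in C^1_c(A;\R^d)$ with $|\xi|\le 1$, convert to a volume integral over $\Omega_n$ via Gauss--Green, and pass to the limit by pairing the strongly $L^2$-convergent factor $u_n\ind_{\Omega_n}\xi$ against the weakly $L^2$-convergent $\ind_{\Omega_n}\nabla u_n$. The only difference is your preliminary truncation at level $M$, which the paper omits (the strong $L^2$-convergence already gives $u_n^2\to u_\infty^2$ in $L^1$, so no uniform $L^\infty$-bound is needed for the divergence term), but it is harmless and in fact tidies up the Gauss--Green step when the $u_n$ are unbounded.
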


\begin{proof}	
	The proof is precisely the one from \cite[Lemma 2.4]{GLMV}.	We report it here for the sake of completeness. The key observation is that given $u\in H^1(A)$ and a set of locally finite perimeter $\Omega\subset A$, we have 
	$$\int_{A\cap\partial^\ast \Omega}u^2\,d\HH^{d-1}=\sup\left\{\int_{A\cap \Omega}\text{\rm div}\,\big(u^2\xi\big)\,dx\ :\ \xi\in C^1_c(A;\R^d),\ |\xi|\le 1\right\}.$$
	We now fix a vector field $ \xi\in C^1_c(A;\R^d),\ |\xi|\le 1$ and we compute 
	\begin{align*}
	\liminf_{n\to\infty}\int_{A\cap\partial^\ast \Omega_n}u_n^2\,d\HH^{d-1}&\ge	
	\liminf_{n\to\infty}\int_{A\cap\Omega_n}\text{div}\big(u_n^2\xi\big)\,dx\\
	&= 	\liminf_{n\to\infty}\int_{A}\Big(2\big(u_n\xi\big)\cdot \big(\ind_{\Omega_n}\nabla u_n\big)+\big(u_n\,\ind_{\Omega_n}\big)\,(u_n\,\text{div}\,\xi)\Big)\,dx\,
	\end{align*}
	Now, since $\ind_{\Omega_n}\nabla u_n$ converges weakly in $L^2$ to $\ind_{\Omega_\infty}\nabla u_\infty$, we get
	\begin{align*}
	\liminf_{n\to\infty}\int_{A\cap\partial^\ast \Omega_n}u_n^2\,d\HH^{d-1}&\ge 	\int_{A}\Big(2\big(u_\infty\xi\big)\cdot \big(\ind_{\Omega_\infty}\nabla u_\infty\big)+\big(u_\infty\,\ind_{\Omega_\infty}\big)\,(u_\infty\,\text{div}\,\xi)\Big)\,dx\,\\
	&=\int_{A\cap\Omega_\infty}\text{div}\big(u_\infty^2\xi\big)\,dx.
	\end{align*}
	Taking the supremum over $\xi$, we get \eqref{e:semicontinuity}. 
\end{proof}	

\section{Almost-minimality and H\"older estimates}\label{s:holder}
In this section, we prove two general technical results on the continuity of subharmonic functions which are almost-minimizers of the Dirichlet energy in a suitable sense. We will use these estimates in Sections \ref{s:existence-approx} and \ref{s:regularity}.
\begin{lemma}[A growth estimate]\label{l:holder1}
	Let $D$ be a bounded open set in $\R^d$, $x_0\in D$ and let $u\in H^1(D)$ be a function such that 
	\begin{enumerate}[\quad\rm(a)]
		\item $u$ is non-negative and subharmonic in $D$; 
		\item there are constants $\alpha\in[0,1]$, $K>0$ and $r_0>0$ such that $B_{r_0}(x_0)\subset D$ and
		$$\int_{B_r(x_0)}|\nabla u|^2\,dx\le \int_{B_r(x_0)}|\nabla (u+\varphi)|^2\,dx+Kr^{d-1+\alpha},$$
		for every $r\in(0,r_0)$ and every $\varphi\in H^1_0(B_{r}(x_0))$ with $\varphi\ge 0$ in $B_r(x_0)$.
	\end{enumerate}	 
	Then, for every $r\le r_0/4$, we have 
$$\mean{B_r(x_0)}{u(x)\,dx}-u(x_0)\le \frac{C_d\sqrt{K}}{\alpha+1}\,{r^{\frac{1+\alpha}2}}.$$
In particular, if $u(x_0)=0$, then 
	$$\|u\|_{L^\infty(B_r(x_0))}\le \frac{C_d\sqrt{K}}{\alpha+1}\,{r^{\frac{1+\alpha}2}}\quad\text{for every}\quad r\le \frac{r_0}{8}.$$
\end{lemma}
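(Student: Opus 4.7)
The strategy is to compare $u$ with its harmonic replacement on concentric balls and iterate dyadically. For $r \in (0, r_0)$, let $h_r \in H^1(B_r(x_0))$ denote the harmonic function with $h_r - u \in H^1_0(B_r(x_0))$. Since $u$ is subharmonic and $h_r = u$ on $\partial B_r(x_0)$, the maximum principle gives $u \le h_r$ in $B_r(x_0)$, so $\varphi := h_r - u$ is an admissible competitor in hypothesis~(b). Using the orthogonality $\int_{B_r(x_0)} \nabla(u-h_r)\cdot\nabla h_r\,dx=0$ (a consequence of $u - h_r \in H^1_0(B_r(x_0))$ and the harmonicity of $h_r$), condition (b) simplifies to $\int_{B_r(x_0)} |\nabla(u-h_r)|^2\,dx \le K r^{d-1+\alpha}$. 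Poincar\'e's inequality on $H^1_0(B_r(x_0))$ combined with Cauchy--Schwarz then yields
\begin{equation*}
\mean{B_r(x_0)} (h_r - u)\,dx \le C_d\sqrt{K}\,r^{(1+\alpha)/2}.
\end{equation*}

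Next I would convert this into control on the dyadic increments of $r \mapsto \mean{B_r(x_0)} u\,dx$. Since $h_r$ is harmonic in $B_r(x_0)$, its mean over every concentric subball equals $h_r(x_0)$, so
\begin{equation*}
\mean{B_r(x_0)} u\,dx - \mean{B_{r/2}(x_0)} u\,dx = \mean{B_{r/2}(x_0)}(h_r - u)\,dx - \mean{B_r(x_0)}(h_r - u)\,dx \le 2^d\,\mean{B_r(x_0)}(h_r - u)\,dx,
\end{equation*}
where the last inequality uses $h_r - u \ge 0$ to discard the second term and to pass from the average over $B_{r/2}$ to that over $B_r$. Combined with the previous estimate this gives the dyadic increment bound
\begin{equation*}
\mean{B_r(x_0)} u\,dx - \mean{B_{r/2}(x_0)} u\,dx \le C_d\sqrt{K}\,r^{(1+\alpha)/2}.
\end{equation*}

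Because $u$ is subharmonic, the map $r \mapsto \mean{B_r(x_0)} u\,dx$ is non-decreasing and tends to $u(x_0)$ as $r \to 0$, so setting $r_k := r/2^k$ and summing the resulting telescoping series yields
\begin{equation*}
\mean{B_r(x_0)} u\,dx - u(x_0) = \sum_{k=0}^{\infty}\Bigl(\mean{B_{r_k}(x_0)} u\,dx - \mean{B_{r_{k+1}}(x_0)} u\,dx\Bigr) \le C_d\sqrt{K}\,r^{(1+\alpha)/2}\sum_{k\ge 0} 2^{-k(1+\alpha)/2}.
\end{equation*}
The geometric sum equals $\bigl(1-2^{-(1+\alpha)/2}\bigr)^{-1}$, which is bounded by an absolute multiple of $1/(1+\alpha)$ via the elementary inequality $1-2^{-x}\ge x/2$ on $[1/2, 1]$; this proves the first assertion. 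For the pointwise bound when $u(x_0)=0$, given $y \in B_r(x_0)$ with $r \le r_0/8$, subharmonicity yields $u(y) \le \mean{B_r(y)} u\,dx \le 2^d \mean{B_{2r}(x_0)} u\,dx$, and the first estimate applied at scale $2r \le r_0/4$ concludes.

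I expect the main obstacle to be identifying the right quantity to iterate. The almost-minimality condition only controls a Dirichlet-type excess, not pointwise values; the crucial observation is that the monotone dyadic increments $\mean{B_{r_k}} u - \mean{B_{r_{k+1}}} u$ can be rewritten via the harmonic replacement as a difference $\mean{B_{r_{k+1}}}(h_{r_k} - u) - \mean{B_{r_k}}(h_{r_k} - u)$, and that this non-negative quantity is precisely what the excess plus Poincar\'e controls. Once this is in place, the growth estimate collapses to a geometric series and the pointwise bound follows by a standard translation argument.
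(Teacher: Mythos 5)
Your proof is correct, but it follows a genuinely different route from the paper's. The paper works directly with the distributional Laplacian: since $u$ is subharmonic, $\Delta u$ is a positive measure, and the difference of spherical averages is represented as $\frac{1}{d\omega_d}\int_0^r s^{1-d}\Delta u(B_s)\,ds$; the almost-minimality is then exploited by testing with an explicit nonnegative truncated cone $t\,\varphi$, $\varphi(x)=\frac1r(2r-|x|)_+$, and optimizing over the scalar $t$ to bound $\Delta u(B_s)$ by $C\sqrt{K}\,s^{d-\frac32+\frac\alpha2}$; a single integration in $s$ then produces the factor $r^{\frac{1+\alpha}{2}}/(\alpha+1)$. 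You instead run a Campanato-type iteration: the competitor $\varphi=h_r-u$ (admissible precisely because subharmonicity forces $h_r\ge u$) turns almost-minimality into the energy-excess bound $\int_{B_r}|\nabla(u-h_r)|^2\le Kr^{d-1+\alpha}$ via orthogonality, Poincar\'e converts this into $L^1$-closeness of $u$ to $h_r$, and the mean-value property of $h_r$ plus monotonicity of averages of subharmonic functions lets you telescope dyadically; the geometric series reproduces the $1/(\alpha+1)$ dependence. Both arguments use subharmonicity at the same structural point (positivity of $\Delta u$ versus $h_r\ge u$), and both correctly respect the one-sided constraint $\varphi\ge0$ in hypothesis (b). Your approach is the more standard and transferable excess-decay scheme; the paper's is shorter in that it avoids harmonic replacement and iteration, obtaining the modulus in one integration. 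The final $L^\infty$ step is the same in both.
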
	
\begin{proof}
Without loss of generality, we can suppose that $x_0=0$ and $D=B_{r_0}$. Then, for every $r\in(0,r_0)$, we have 
$$0\le \mean{\partial B_r}{u\,d\HH^{d-1}}-u(0)\le \frac{1}{d\omega_d}\int_{0}^r{s^{1-d}}\Delta u(B_s)\,ds.$$
Thus, we only need to estimate $\Delta u(B_r)$. In order to do so, we test the optimality of $u$ with $u+t\varphi$, where $\varphi(x):=\frac{1}{r}(2r-|x|)_+\,.$  
$$-2\int_{B_r}\nabla u\cdot\nabla\varphi\,dx\le t\int_{B_r}|\nabla\varphi|^2+\frac{K}{t}r^{d-1+\alpha}\le t\omega_dr^{d-2}+\frac{K}{t}r^{d-1+\alpha}.$$
Taking 
$$t:=\sqrt{\frac{K}{\omega_d}}\,r^{\frac{1+\alpha}{2}},$$
we obtain
$$-2\int_{B_r}\nabla u\cdot\nabla\varphi\,dx\le \sqrt{{K}{\omega_d}}\,r^{d-\frac32+\frac{\alpha}{2}},$$
and so, for every $r\le \frac{r_0}2$, we get 
\begin{align*}
\Delta u(B_{r/2})\le \int_{B_{r}}\varphi(x)\Delta u(x)\,dx\le -\int_{B_{r}}\nabla\varphi\cdot\nabla u\,dx\le \frac12\sqrt{{K}{\omega_d}}\,r^{d-\frac32+\frac{\alpha}{2}}.
\end{align*}
Then, for every $r\le\frac{r_0}{4}$, we can estimate 
$$\mean{\partial B_r}{u\,d\HH^{d-1}}-u(0)\le \frac{2^{d-1}\sqrt{K\omega_d}}{d\omega_d}\int_{0}^r{s^{1-d}}s^{d-\frac32+\frac{\alpha}{2}}\,ds\le \frac{2^{d}\sqrt{K\omega_d}}{d\omega_d}\frac{r^{\frac{1+\alpha}2}}{\alpha+1}.$$
Now, using the non-negativity and the subharmonicity of $u$, we get the claim.
\end{proof}	

\begin{lemma}[H\"older continuity]\label{l:holder2}
	Let $D$ be a bounded open set in $\R^d$ let $u\in H^1(D)$ be a function which is subharmonic in $D$ and such that
	$$0\le u\le M\quad\text{in}\quad D,$$ 
	for some constant $M>0$. Given $\delta\in(0,1)$, we define the set 
		\begin{equation}\label{e:Ddelta}
	D_\delta:=\big\{x\in D\ :\ \text{\rm dist}(x,\partial D)> \delta\big\},
	\end{equation}
	and we suppose that there are constants $\alpha\in[0,1]$ and $K>0$ such that 
		$$\int_{B_r(x_0)}|\nabla u|^2\,dx\le \int_{B_r(x_0)}|\nabla (u+\varphi)|^2\,dx+Kr^{d-1+\alpha},$$
		for every $x_0\in D_\delta$, every $r\in(0,\delta)$ and every $\varphi\in H^1_0(B_{r}(x_0))$ with $\varphi\ge 0$ in $B_r(x_0)$.
	Then, 
	$$|u(x)-u(y)|\le C_d\left(M+\frac{\sqrt{K}}{\alpha+1}\right)\,{|x-y|^{\frac{1+\alpha}{3+\alpha}}},$$
	for every $x,y\in D_\delta$ such that $|x-y|<\big(\frac\delta{16}\big)^{2}$
\end{lemma}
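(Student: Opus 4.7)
The starting point is the one-sided growth estimate from \cref{l:holder1}, which says that for every $x_0\in D_\delta$ and every $r\le \delta/4$,
\[
0\le \mean{B_r(x_0)}{u(x)\,dx}-u(x_0)\le \frac{C_d\sqrt{K}}{\alpha+1}\,r^{\frac{1+\alpha}{2}},
\]
where the left inequality is just the sub-mean-value property for the subharmonic function $u$. The idea is to combine this pinching of $u(x_0)$ between $0$ and its average with the trivial observation that averages over two nearby balls of the same radius $r$ differ by $O(M|x-y|/r)$, and then optimize in $r$.

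\textbf{Step 1 (decomposition).} For $x,y\in D_\delta$ and $r\le\delta/4$ to be chosen later, write
\[
u(x)-u(y)=\Bigl(u(x)-\mean{B_r(x)}u\Bigr)+\Bigl(\mean{B_r(x)}u-\mean{B_r(y)}u\Bigr)+\Bigl(\mean{B_r(y)}u-u(y)\Bigr).
\]
The first and third terms are controlled by \cref{l:holder1} applied at $x$ and at $y$ (both are in $D_\delta$, so the hypotheses are satisfied with $r_0=\delta$): each of them has absolute value at most $\tfrac{C_d\sqrt{K}}{\alpha+1}\,r^{(1+\alpha)/2}$.

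\textbf{Step 2 (comparison of averages).} For the middle term I will use the crude bound $0\le u\le M$. Since $|B_r(x)\triangle B_r(y)|\le C_d\, r^{d-1}|x-y|$ whenever $|x-y|\le r$, we obtain
\[
\Bigl|\mean{B_r(x)}u-\mean{B_r(y)}u\Bigr|=\frac{1}{|B_r|}\Bigl|\int_{B_r(x)\setminus B_r(y)}u-\int_{B_r(y)\setminus B_r(x)}u\Bigr|\le \frac{C_d\, M\,|x-y|}{r}.
\]

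\textbf{Step 3 (optimization and verification of admissibility).} Setting $t=|x-y|$, the two contributions balance when $\frac{Mt}{r}\sim \sqrt{K}\,r^{(1+\alpha)/2}$, i.e.\ when $r^{(3+\alpha)/2}\sim Mt/\sqrt{K}$. I therefore choose, up to a dimensional constant,
\[
r:=t^{\,\frac{2}{3+\alpha}},
\]
(absorbing $M$ and $\sqrt K$ into the final constant). Plugging this back into both estimates yields
\[
|u(x)-u(y)|\le C_d\Bigl(M+\frac{\sqrt K}{\alpha+1}\Bigr)\,t^{\,\frac{1+\alpha}{3+\alpha}},
\]
which is exactly the claimed estimate. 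The only thing left to check is that this $r$ is admissible for \cref{l:holder1}, namely that $r\le \delta/4$: since $\tfrac{2}{3+\alpha}\ge \tfrac12$ for $\alpha\in[0,1]$, the hypothesis $|x-y|<(\delta/16)^2$ gives $r\le |x-y|^{1/2}<\delta/16<\delta/4$, and one similarly verifies $|x-y|\le r$ (needed in Step~2) in the same range.

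\textbf{Expected difficulty.} There is no conceptually deep step here; the argument is a textbook ``average vs.\ pointwise value'' Campanato-type trick, and the only place where care is required is the bookkeeping around the exponent $\tfrac{1+\alpha}{3+\alpha}$ and the verification that the optimal $r$ lies in the range where \cref{l:holder1} applies. The condition $|x-y|<(\delta/16)^2$ in the statement is tailored precisely so that the chosen $r$ satisfies $r\le\delta/4$ uniformly in $\alpha\in[0,1]$.
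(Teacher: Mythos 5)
Your proof is correct and follows essentially the same strategy as the paper's: both pinch the pointwise values between the averages via subharmonicity and \cref{l:holder1}, compare averages of nearby balls using $0\le u\le M$, and optimize with $r=|x-y|^{2/(3+\alpha)}$. The only (cosmetic) difference is that you compare averages over two balls of equal radius via the symmetric difference, whereas the paper compares $\meantext{B_r(x)}u$ with the average over the enlarged ball $B_{r+|x-y|}(y)$ using inclusion and nonnegativity.
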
	
\begin{proof}
We apply the previous lemma, to $x_0=y$ and $r=|x-y|^\gamma$. Then, 
\begin{align*}
u(x)-u(y)&\le \mean{B_r(x)}u-u(y)\le \frac{(r+|x-y|)^d}{r^d}\mean{B_{r+|x-y|}(y)}u-u(y)\\
&\le \frac{(r+|x-y|)^d-r^d}{r^d}M+\mean{B_{r+|x-y|}(y)}u-u(y)\\
&\le \frac{|x-y|}{r}Md\frac{(r+|x-y|)^{d-1}}{r^{d-1}}+\frac{C_d\sqrt{K}}{\alpha+1}\,{(r+|x-y|)^{\frac{1+\alpha}2}}.
\end{align*}
Now, since $|x-y|<1$ and $r=|x-y|^\gamma$, with $\gamma\in(0,1)$, we get 
\begin{align*}
u(x)-u(y)&\le d2^dM|x-y|^{1-\gamma}+\frac{2C_d\sqrt{K}}{\alpha+1}\,|x-y|^{\gamma\frac{1+\alpha}2}.
\end{align*}
Choosing $\gamma=\frac{2}{3+\alpha}$, we get that $$1-\gamma=\gamma\frac{1+\alpha}{2}=\frac{\alpha+1}{\alpha+3},$$
and so we get the claim. Finally, we notice that we should have the inequality $r+|x-y|<\frac\delta8$, which is satisfied for instance when $|x-y|^\gamma<\frac\delta{16}$.
\end{proof}

\section{Non-degenerate approximating problems}\label{s:existence-approx}
In this section, we define a sequence of non-degenerate problems, approximating \eqref{problema}, in which the competitors $u$ are a priori bounded from below by a fixed positive constant. \medskip
%
%

\noindent We consider the family of approximating problems
\begin{equation} \label{e:approx}
\min\Big\{J_\eps\big(u,\Omega_1, \Omega_2\big) \,:\ u\in\mathcal V,\ \big(\Omega_1,\Omega_2\big)\in \A(u)\Big\},
\end{equation}
where the functional $J_\eps$ is defined as
\begin{align*}
J_\eps(u,\Omega_1,\Omega_2)&:= \int_D |\nabla u|^2\,dx+  \Lambda|\{u>0\}\cap D|\\
&\qquad+\frac\beta2\left(	\int_{\partial^\ast\Omega_1}u^2\, d\HH^{d-1}+\int_{\partial^\ast\Omega_2}u^2\, d\HH^{d-1}\right)+\eps\Big(\text{\rm Per}(\Omega_1)+\text{\rm Per}(\Omega_2)\Big).
\end{align*}

\begin{prop}[Solutions to the approximating problem]\label{p:approx}
Let $D$ be a bounded open set in $\R^d$. Then, for any $\eps>0$ there are  $u_\eps\in \V$ and  $(\Omega^1_\eps,\Omega^2_\eps)\in\mathcal A(u_\eps)$ such that:
\begin{enumerate}[\quad\rm(i)]
\item $(u_\eps,\Omega^1_\eps,\Omega^2_\eps)$ is a solution to \eqref{e:approx};
\item the function $u_\eps$ is H\"older continuous in $D$ and for every $\delta>0$ there is a constant $C_\delta>0$, depending on $\delta$, $d$, $\beta$, $\Lambda$ and $\|g\|_{L^\infty}$, such that
\begin{equation} \label{eq:holder1}
|u_\eps(x)-u_\eps(y)|\leq C_\delta |x-y|^{\sfrac13}\qquad\text{for every}\qquad x,y\in D_\delta,
\end{equation}
where $D_\delta$ is the set defined in \eqref{e:Ddelta}.
\item there is a constant $\rho>0$, depending only on $d$, $\Lambda$ and $\|g\|_{L^\infty}$, such that
\begin{equation} \label{e:compact-support-eps}
\Omega_\eps^1\cup\Omega_\eps^2\subset (E_1\cup E_2)+B_\rho.
\end{equation}
\end{enumerate}
\end{prop}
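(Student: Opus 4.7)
The plan is to prove (i), (ii), and (iii) in sequence, using standard techniques adapted to $J_\eps$.

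For existence (i), I would apply the direct method of the calculus of variations. A reference triple built from a smooth extension of $g$ together with smooth bounded neighborhoods of $E_1$ and $E_2$ gives $\inf J_\eps\le M$. Along a minimizing sequence $(u_n,\Omega_1^n,\Omega_2^n)$, the truncation $u_n\wedge\|g\|_{L^\infty}$ does not increase any of the four terms of $J_\eps$, so we may assume $0\le u_n\le\|g\|_{L^\infty}$. The Dirichlet and measure terms then control $\|u_n\|_{H^1}$, while the perimeter term gives $\mathrm{Per}(\Omega_i^n)\le M/\eps$. Up to a subsequence, $u_n\rightharpoonup u_\eps$ in $H^1$, strongly in $L^2_{\mathrm{loc}}$ and pointwise a.e., and $\chi_{\Omega_i^n}\to\chi_{\Omega_i^\eps}$ in $L^1_{\mathrm{loc}}$ and a.e. All defining conditions of $\mathcal V$ and $\mathcal A(u_\eps)$ (sign, boundary trace on $\partial D$, disjointness, $E_i\subset \Omega_i$, and $\{u>0\}\subset\Omega_1\cup\Omega_2$) pass to the pointwise a.e.\ limit, and each of the four terms of $J_\eps$ is lower semicontinuous: Dirichlet by weak $L^2$; the measure $\Lambda|\{u>0\}|$ by Fatou on $\chi_{\{u_n>0\}}$; perimeter classically in $L^1_{\mathrm{loc}}$; the two boundary integrals by \cref{l:semicontinuity}.

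For H\"older continuity (ii), I would first get $\|u_\eps\|_{L^\infty}\le\|g\|_{L^\infty}$ by competing with the top-truncation (all four terms non-increasing). Subharmonicity in $D$ follows from competing with $\bigl((u_\eps-s\varphi)_+,\Omega_1^\eps,\Omega_2^\eps\bigr)$ for $\varphi\in C_c^\infty(D)$, $\varphi\ge 0$: the measure, perimeter and boundary-integral terms are non-increasing under this substitution, so $\int|\nabla u_\eps|^2\le\int|\nabla(u_\eps-s\varphi)_+|^2\le\int|\nabla u_\eps|^2-2s\int\nabla u_\eps\cdot\nabla\varphi+s^2\int|\nabla\varphi|^2$, and $s\downarrow 0$ gives $\Delta u_\eps\ge 0$. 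To apply \cref{l:holder2} I use the competitor $(u_\eps+\varphi,\tilde\Omega_1,\tilde\Omega_2)$ with $\tilde\Omega_1:=\Omega_1^\eps\cup B_r(x_0)$ and $\tilde\Omega_2:=\Omega_2^\eps\setminus B_r(x_0)$, for $x_0\in D_\delta$, $r<\delta$ and $\varphi\in H^1_0(B_r(x_0))$, $\varphi\ge 0$. Since $B_r(x_0)\subset D$ sits at positive distance from $E_1\cup E_2$, admissibility is immediate; crucially $\partial^\ast\tilde\Omega_1\cap B_r(x_0)=\emptyset=\partial^\ast\tilde\Omega_2\cap B_r(x_0)$ (density $1$ and $0$ respectively), so the only new reduced boundary lies on $\partial B_r(x_0)$, where $\varphi$ has zero trace. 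Direct computation gives $\mathrm{Per}(\tilde\Omega_i)-\mathrm{Per}(\Omega_i^\eps)\le d\omega_d r^{d-1}$, $|\{u_\eps+\varphi>0\}|-|\{u_\eps>0\}|\le\omega_dr^d$, and $\int_{\partial^\ast\tilde\Omega_i}(u_\eps+\varphi)^2\,d\HH^{d-1}\le\int_{\partial^\ast\Omega_i^\eps}u_\eps^2\,d\HH^{d-1}+\|g\|_{L^\infty}^2 d\omega_dr^{d-1}$ (the boundary integrals do not see $\varphi$). Summing and using $r^d\le\delta r^{d-1}$, the excess beyond the Dirichlet integral is bounded by $Kr^{d-1}$ with $K$ depending only on $d,\beta,\Lambda,\|g\|_{L^\infty},\delta$; this is exactly the hypothesis of \cref{l:holder2} with $\alpha=0$, yielding H\"older exponent $\tfrac{1}{3}$ on $D_\delta$.

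For the compact support estimate (iii), I would fix a reference $(u_0,\Omega_1^0,\Omega_2^0)$ with $\Omega_i^0$ a smooth bounded neighborhood of $E_i$ (sitting at positive distance from $E_j$, $j\ne i$) and $u_0$ a Lipschitz function equal to $g$ near $\overline{E_1\cup E_2}$ and supported in $\Omega_1^0\cup\Omega_2^0$. This gives $J_\eps\le M_0$ independent of $\eps$, hence $\int|\nabla u_\eps|^2+\Lambda|\{u_\eps>0\}|\le M_0$ and $\|u_\eps\|_{L^\infty}\le\|g\|_{L^\infty}$ uniformly. Since the extra non-Dirichlet/non-measure terms in $J_\eps$ are non-negative, $u_\eps$ is an almost-minimizer of the Alt--Caffarelli one-phase functional, and the classical Alt--Caffarelli barrier/cutoff argument (which depends only on $d,\Lambda,\|g\|_{L^\infty}$) yields $\{u_\eps>0\}\subset (E_1\cup E_2)+B_\rho$. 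Finally, any connected portion of $\Omega_i^\eps$ outside $(E_1\cup E_2)+B_\rho$ is disjoint from $\{u_\eps>0\}\cup E_i$ and can be removed without altering the Dirichlet or measure terms, without increasing the boundary integrals (whose integrand $u_\eps^2$ vanishes there), and strictly decreasing the perimeter; minimality then forces \eqref{e:compact-support-eps}.

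The main obstacle is (iii): the required constant $\rho$ must be independent of $\eps$, yet the perimeter penalty—the only direct geometric control on $\Omega_i^\eps$ in $J_\eps$—is weighted by $\eps\to 0$. The argument must therefore route through the support compactness of the one-phase part $\int|\nabla u|^2+\Lambda|\{u>0\}|$, and then carefully use minimality to transfer that confinement from $\{u_\eps>0\}$ to the sets $\Omega_i^\eps$ via an $\eps$-independent cutting construction.
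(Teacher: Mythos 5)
Your proposal is correct and follows essentially the same route as the paper: the direct method together with \cref{l:semicontinuity} for existence, the competitor $\big(u_\eps+\varphi,\ \Omega^1_\eps\cup B_r(x_0),\ \Omega^2_\eps\setminus B_r(x_0)\big)$ combined with \cref{l:holder2} for the H\"older estimate, and the Alt--Caffarelli subsolution property plus a perimeter-only cutting argument for the confinement of $\Omega^1_\eps\cup\Omega^2_\eps$. The only cosmetic difference is in the last step, where the paper intersects with the convex envelope of $\{u_\eps>0\}$ (so the perimeter cannot increase) rather than removing ``connected portions'' outside the possibly non-convex set $(E_1\cup E_2)+B_\rho$.
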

\begin{proof} 
	Let $\eps>0$ be fixed. We divide the proof into several steps.	\medskip
	
\noindent \textbf{Existence.}
Let $\big\{\big(u_{\eps,i},\Omega_{\eps,i}^1,\Omega_{\eps,i}^2\big)\big\}_{i\in \N}$ be a minimizing sequence for \eqref{e:approx}. Since we can use 
	$\big(g,E_1,E_2\big)$
	as a competitor against $\big(u_{\eps,i},\Omega_{\eps,i}^1,\Omega_{\eps,i}^2\big)$, we have that
	\begin{equation} \label{e:minimizingapp}
	\begin{array}{ll}
	\ds\int_D |\nabla u_{\eps,i}|^2 \, dx+ \eps\Big(\text{\rm Per}(\Omega^1_{\eps,i})+\text{\rm Per}(\Omega^2_{\eps,i})\Big)\le J_{\eps}(g,E_1,E_2)\le J_{1}(g,E_1,E_2).
	\end{array}
	\end{equation}
	Moreover, since $u_{\eps,i}-g\in H^1_0(D)$, by the Poincar\'e inequality with constant $C_D$, we have 
	\begin{align*}
	\|u_{\eps,i}\|_{L^2(D)} &\leq \|u_{\eps,i}-g\|_{L^2(D)}+\|g\|_{L^2(D)}\\
	& \leq C_{D}\|\nabla(u_{\eps,i}-g)\|_{L^2(D)}+\|g\|_{L^2(D)}.
	\end{align*}
	Thus, the sequence $\{u_{\eps,i}\}_{i\in \mathbb N}$ is bounded in $H^1(D)$. Hence, up to a subsequence, there exists a function $u_\eps\in H^1_0(\R^d)$ such that
	$ \nabla u_{\eps,i} \to \nabla u_\eps$ weakly in $L^{2}(\R^d)$ and $u_{\eps,i}\to u_\eps$ strongly in $L^2(\R^d)$ and pointwise almost-everywhere. In particular, the almost-everywhere convergence gives that 
	\begin{equation}\label{e:semicont-measure-eps}
	\ind_{\{u_\eps>0\}}\le\liminf_{i\to\infty}\ind_{\{u_{\eps,i}>0\}}.
	\end{equation}
	
Using again \eqref{e:minimizingapp}, we obtain
$$	\text{Per}\big(\Omega_{\eps,i}^1\big)+\text{Per}\big(\Omega_{\eps,i}^2\big)\le \frac1{\eps}J_{1}(g,E_1,E_2).$$
Hence, there are sets of finite perimeter $\Omega^1_\eps$ and $\Omega_\eps^2$ such that
$$\Omega_\eps^1\cap\Omega^2_\eps=\emptyset\ ,\qquad E_1\subset \Omega^1_\eps\qquad\text{and}\qquad E_2\subset \Omega^2_\eps,$$
and such that (up to a subsequence)
$$\ind_{\Omega_{\eps,i}^1}\to \ind_{\Omega_\eps^1}\qquad\text{and}\qquad \ind_{\Omega_{\eps,i}^2}\to \ind_{\Omega_\eps^2},$$ 
strongly in $L^1(\R^d)$ and pointwise almost-everywhere. Together with \eqref{e:semicont-measure-eps}, this implies that 
$$\{u_\eps>0\}\subset \Omega_\eps^1\cup\Omega_\eps^2\ ,$$
so $(\Omega_\eps^1,\Omega_\eps^2)\in \mathcal A(u_\eps)$.
Finally, the semicontinuity of $J_\eps$ (see \cref{l:semicontinuity}) gives that $(u_\eps,\Omega^1_\eps, \Omega^2_\eps)$ is a  solution to the variational problem \eqref{e:approx}.\medskip

	\noindent\textbf{Subharmonicity of $u_\eps$.} 	Let $v$ be a function in $H^1(D)$ such that 
	$$v\le u_\eps\quad\text{in}\quad D\qquad \text{and}\qquad v=u_\eps\quad\text{on}\quad\partial D\,.$$ 
	Then, testing the optimality of $(u_\eps,\Omega^1_\eps,\Omega^2_\eps)$ with $(v_+,\Omega^1_\eps,\Omega^2_\eps)$ and using the fact that 
	$$v_+\le u_\eps\quad\text{in}\quad D,$$
	we get
	\begin{align*}
	\int_D|\nabla v|^2\,dx&\ge \int_D|\nabla v_+|^2\,dx	\ge  \int_D|\nabla u_\eps|^2\,dx.
	\end{align*} 	
	In particular, if $\varphi$ is a nonnegative function compactly supported in $D$, then we can apply the above inequality to $v:=u-t\varphi$ for some $t>0$. Then , by sending $t$ to zero, we get that 
	$$-\int_D\nabla \varphi\cdot\nabla u_\eps\,dx\ge 0,$$
	which means that the distributional Laplacian $\Delta u_\eps$ is a positive Radon measure in $D$. \medskip

	\noindent\textbf{H\"older continuity of $u_\eps$.} Let $\delta\in(0,1)$ and let $D_\delta$ be given by \eqref{e:Ddelta}. 
	Let $x_0\in D_\delta$ and $0<R<\delta$.
	Let $\varphi\in H^1_0(B_R (x_0))$ be such that $\varphi\ge 0$ in $B_{R}(x_0)$. 
	We set 
	$$\widetilde \Omega_\eps^1:= \Omega_\eps^1\cup B_R (x_0)\qquad\text{and}\qquad
	\widetilde \Omega_\eps^2:= \Omega_\eps^2\setminus B_R(x_0),$$ 
	so that $(\widetilde \Omega_\eps^1,\widetilde \Omega_\eps^2) \in \mathcal A(u_\eps)$.
Using $(u_\eps+\varphi, \widetilde \Omega_\eps^1,\widetilde \Omega_\eps^2)$ to test the optimality of $(u_\eps,\Omega^1_\eps,\Omega_\eps^2)$, we get 
	\begin{align*} 
	\int_{B_R(x_0)}&|\nabla u_\eps|^2\,dx+\Lambda|B_R(x_0)\cap\{u_\eps>0\}|+\frac\beta2\int_{\partial^\ast \Omega_\eps^1} u_\eps^2\, d\HH^{d-1}+\frac\beta2\int_{\partial^\ast \Omega_\eps^2} u_\eps^2\, d\HH^{d-1}
	\\
	&\leq \int_{B_R(x_0)} |\nabla (u_\eps+\varphi)|^2\,dx +\Lambda|B_R|+\frac{\beta}2\int_{ \partial^\ast \widetilde \Omega_\eps^1}  u_\eps^2\, d\HH^{d-1}+\frac{\beta}2\int_{\partial^\ast \widetilde \Omega_\eps^2}  u_\eps^2\, d\HH^{d-1}+ \eps\HH^{d-1}(\partial B_R).
	\end{align*}
		Moreover, the definition of $(u+\varphi,\widetilde \Omega^1_\eps,\widetilde \Omega^2_\eps)$ gives that, for $j=1,2$,
	\begin{equation}\label{eq:laplmeas1H}
	\begin{split}
\int_{ \partial^\ast \widetilde \Omega_\eps^j} u_\eps^2 \,d\HH^{d-1}&\le \int_{ \partial^\ast \Omega_\eps^j} u_\eps^2 \,d\HH^{d-1}+\int_{ \partial B_R(x_0)} u_\eps^2 \,d\HH^{d-1}\\
&\le \int_{ \partial^\ast \Omega_\eps^j} u_\eps^2 \,d\HH^{d-1}
	+ d\omega_d R^{d-1}\|g\|_{L^\infty(D)}^2.
	\end{split}
	\end{equation}
	Combining these two estimates, we obtain
		\begin{align*} 
	\int_{B_R(x_0)}|\nabla u_\eps|^2\,dx&\le \int_{B_R(x_0)} |\nabla (u_\eps+\varphi)|^2\,dx +\Lambda|B_R|+\big(\eps+\beta\|g\|_{L^\infty(D)}^2\big)\,d\omega_d R^{d-1}\\
	&\le \int_{B_R(x_0)} |\nabla (u_\eps+\varphi)|^2\,dx +\Big(\omega_d\Lambda\delta+\big(1+\beta\|g\|_{L^\infty(D)}^2\big)\,d\omega_d\Big) R^{d-1}.
	\end{align*}
	By Lemma \ref{l:holder2} we get that 
	\begin{equation} \label{eq:holder1}
	|u_\eps(x)-u_\eps(y)|\leq C |x-y|^{\sfrac13}\qquad\text{for every}\qquad x,y\in D_\delta,
	\end{equation}
	such that $\ds|x-y|\le \frac{\delta^2}{256}$, where the constant $C$ depends only on $d$, $\Lambda$, $\beta$ and $\|g\|_{L^\infty}$.\medskip
	
		\noindent\textbf{Boundedness of $\Omega_\eps^1$ and $\Omega_\eps^2$.} We first notice that $u_\eps$ is a subsolution of the Alt-Caffarelli functional, that is, 
		$$\int_{\R^d}|\nabla u_\eps|^2\,dx+\Lambda|\{u_\eps>0\}|\le \int_{\R^d}|\nabla v|^2\,dx+\Lambda|\{v>0\}|,$$ 
		for every $v\in H^1(\R^d)$ such that $u-v\in H^1_0(D)$ and $0\le v\le u_\eps$ on $\R^d$. In particular, this implies (see for instance \cite{velectures}) that the set $\{u_\eps>0\}$ lies in a sufficiently large ball $B_\rho$. Now, since outside $\{u_\eps>0\}$ the functional $J_\eps$ only accounts for the perimeter of $\Omega_\eps^1$ and $\Omega_\eps^2$, we have that these sets should be contained in the convex envelope of $\{u_\eps>0\}$, which in particular gives \eqref{e:compact-support-eps}.	
\end{proof}

\section{The limit of the non-degenerate solutions}\label{sub:existence-conclusion}
In this section we define the function $u$ (\cref{sub:u_infty}) and the sets $(\Omega_1,\Omega_2)$ (\cref{sub:omega12}), which we will prove to be solutions to the initial problem \eqref{problema}. Throughout this section, for any $\eps>0$, we fix a solution $\big(u_\eps,\Omega_\eps^1,\Omega_\eps^2\big)$ of the approximating problem \eqref{e:approx} for $J_\eps$. 

\subsection{The limit function}\label{sub:u_infty}
It is immediate to check that, there is a function 
$$u\in H^1(\R^d),$$
and a sequence $\eps_n\to0$ such that 
\begin{itemize}
	\item for every fixed $\delta>0$, $u_{\eps_n}\to u$ uniformly in  $D_\delta$ as $n\to\infty$\,; \smallskip
	\item $u_{\eps_n} \to u$ strongly in $L^2(\R^d)$ and pointwise almost-everywhere in $\R^d$\,;\smallskip
	\item $\nabla u_{\eps_n} \to \nabla u$ weakly in $L^2(\R^d)$. 
\end{itemize}
By construction, we have  $u-g\in H^1_0(D)$, while \cref{p:approx} gives that
$$u\in H^1_0\big((E_1\cup E_2)+B_\rho\big)\qquad\text{and}\qquad u\in C^{0,\sfrac13}(\overline D_\delta)\quad\text{for every}\quad\delta>0.$$
Moreover,
$$ 0\le u\le \|g\|_{L^\infty(\R^d)}\qquad \text{and}\qquad \Delta u\ge 0\quad\text{in}\quad D.$$
\subsection{The limit sets}\label{sub:omega12}
We next construct the sets $\Omega_1$ and $\Omega_2$. Choose a ball 
$$B_R(x_0)\subset\subset D\cap \{u>0\}.$$ 
Then, there are $t>0$ and $\delta>0$ such that
$$\overline B_R(x_0)\subset  D_\delta \cap \{u\ge t\},$$
where $D_\delta$ is given by \eqref{e:Ddelta}. 
By the uniform convergence of $u_{\eps_n}$ to $u$ on $D_\delta$, we can find $n_0\ge 1$ such that, the following inequality holds for every $n\ge n_0$: 
$$u_{\eps_n}\geq \frac{t}2\quad\text{in}\quad \overline B_R(x_0).$$
Using this inequality and the optimality of $\big(u_{\eps_n}, \Omega^1_{\eps_n},\Omega^2_{\eps_n}\big)$, we can estimate
\begin{align*}
\frac{2}{\beta}J_\eps\big(g, E_1,E_2\big)\ge \frac{2}{\beta}J_\eps\big(u_{\eps_n}, \Omega^1_{\eps_n},\Omega^2_{\eps_n}\big)
&\ge \int_{B_R(x_0)\cap \partial^\ast \Omega^1_{\eps_n}}u_{\eps_n}^2\,d\HH^{d-1}+ \int_{B_R(x_0)\cap \partial^\ast \Omega_{\eps_n}^2} u_{\eps_n}^2\, d\HH^{d-1}
\\
 &\ge \frac{t^2}{4} \Big(\text{\rm Per}\big(\Omega_{\eps_n}^1;B_R(x_0)\big)+\text{\rm Per}\big(\Omega_{\eps_n}^2;B_R(x_0)\big)\Big).
\end{align*}
Thus, the sets
$$\Omega^1_{\eps_n}\cap B_R(x_0)\qquad\text{and}\qquad \Omega^2_{\eps_n}\cap B_R(x_0)$$ 
have uniformly bounded perimeter. In particular, up to a subsequence there are sets 
$$\Omega_{R,x_0}^1\cap B_R(x_0)\qquad\text{and}\qquad \Omega_{R,x_0}^2\cap B_R(x_0),$$
of finite perimeter and such that, as $n\to\infty$, 
\[
\ind_{\Omega^1_{\eps_n} \cap B_R(x_0)}\to \ind_{\Omega_{R,x_0}^1}\qquad\text{and}\qquad \ind_{\Omega^2_{\eps_n} \cap B_R(x_0)}\to \ind_{\Omega_{R,x_0}^2},
\]
pointwise almost-everywhere and strongly in $L^1(\R^d)$.
Thus, by a diagonal sequence argument, we can define the sets $\Omega_1$ and $\Omega_2$ as the union of $\Omega^1_{R,x_0}$ and $\Omega^2_{R,x_0}$ over all balls 
$$B_R(x_0)\subset\subset D\cap\{u>0\},$$ 
of radius $R\in\Q$ and center with rational coordinates $x_0\in\Q^d$,
$$\Omega_i:=E_i\cup\bigcup_{R,x_0}\Omega^i_{R,x_0}\qquad \text{for}\quad i=1,2. $$
By construction, $\Omega_1$ and $\Omega_2$ have locally finite perimeter in $D\cap \{u>0\}$ and satisfy 
$$E_i\subset\Omega_i\subset  \Big(\big(E_1\cup E_2\big)\cap B_\rho\Big),$$
where we recall that $D:=\R^d\setminus \big(\overline E_1\cup \overline E_2\big)$. 
Moreover, we still have the pointwise convergence of the corresponding characteristic functions, that is, for $i=1,2$,
$$\ind_{\Omega_i}=\lim_{n\to\infty} \ind_{\Omega^i_{\eps_n}\cap\{u>0\}}\quad \text{in}\quad \R^d,$$
which implies that, for almost every $x\in \R^d$,
$$\ind_{\Omega_1 \cap\, \Omega_2}(x)=\ind_{\Omega_1 }(x)\cdot\ind_{\Omega_2}(x)
=\lim_{n\to\infty}\Big(\ind_{\Omega^1_{\eps_n} }(x)\cdot\ind_{\Omega^2_{\eps_n}}(x)\cdot\ind_{\{u>0\}}\Big)=0,$$
the sets $\Omega_1$ and $\Omega_2$ are disjoint, $|\Omega_1 \cap \Omega_2|=0$. 
\begin{oss}
Notice that we do not have a priori that $\Omega_1$ and $\Omega_2$ have finite perimeter in $\R^d$, so at this stage they might not be in the admissible class $\mathcal A(u)$ defined in \cref{sub:statement}.
\end{oss}	

\section{Almost-minimality and Lipschitz estimates of $u$}
In this section we will show that $u$ is an almost-minimizer (in some suitable sense) of the classical one-phase functional of Alt and Caffarelli. From this we deduce the Lipschitz growth of $u$ on the boundary, which we will use in \cref{sub:blow-up} in order to deduce the convergence of the blow-up sequences of $u$.
\begin{lemma}\label{l:outwards}
	Let $u\in H^1(\R^d)$ be the function from \cref{sub:u_infty} and let $\overline B_r(x_0)\subset \R^d\setminus \overline E_2$. Suppose that $v\in H^1(\R^d)$ is a function such that $v-u\in H^1_0(B_r(x_0))$.
	Then,
	\begin{align*}
	\ds\int_{B_r(x_0)}|\nabla u|^2\,dx&+\Lambda|B_r(x_0)\cap\{u>0\}|\\
	&\le \int_{B_r(x_0)}|\nabla v|^2\,dx+\Lambda|B_r(x_0)\cap\{v>0\}|+\beta\int_{\partial B_r(x_0)}u^2d\HH^{d-1}.
	\end{align*}
\end{lemma}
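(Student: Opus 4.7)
The plan is to use, at each level $n$, a competitor for the approximating problem \eqref{e:approx} obtained from $v$ by a cutoff construction that inserts $v$ into $B_r(x_0)$ while absorbing the ball into the first phase, and then pass to the limit. Replacing $v$ by $v_+$ we may assume $v\ge 0$ (indeed $u\ge 0$ gives $v_+-u\in H^1_0(B_r(x_0))$, while $|\nabla v_+|\le|\nabla v|$ and $\{v_+>0\}=\{v>0\}$), and for clarity we treat the case $\overline B_r(x_0)\subset D$, the general case being analogous. Fix $\eta,\delta>0$ small and pick $\xi\in C^\infty_c(B_{r-\eta}(x_0))$ with $\xi\equiv 1$ on $B_{r-2\eta}(x_0)$ and $\|\nabla\xi\|_\infty\le C/\eta$. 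Set $v_n:=\xi v+(1-\xi)u_{\eps_n}$ on $B_r(x_0)$, $v_n:=u_{\eps_n}$ elsewhere, and $\widetilde\Omega^1_n:=\Omega^1_{\eps_n}\cup B_r(x_0)$, $\widetilde\Omega^2_n:=\Omega^2_{\eps_n}\setminus\overline{B_r(x_0)}$; the hypothesis $\overline B_r(x_0)\cap\overline E_2=\emptyset$ is precisely what yields $E_2\subset\widetilde\Omega^2_n$, making the triple admissible.

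Apply the optimality of $(u_{\eps_n},\Omega^1_{\eps_n},\Omega^2_{\eps_n})$ against $(v_n,\widetilde\Omega^1_n,\widetilde\Omega^2_n)$ and cancel the identical contributions outside $B_r(x_0)$. Since $v_n=u_{\eps_n}$ on $\partial^\ast\Omega^j_{\eps_n}\setminus\overline B_r(x_0)$ and on $\partial B_r(x_0)$ (by the $H^1_0$-trace property of $v_n-u_{\eps_n}$), the surface-term discrepancy is bounded by $\beta\int_{\partial B_r(x_0)}u_{\eps_n}^2\,d\HH^{d-1}$; the perimeter correction is at most $2\eps_n d\omega_d r^{d-1}=O(\eps_n)$; and since $v_n=v$ on $B_{r-2\eta}(x_0)$, $|\{v_n>0\}\cap B_r(x_0)|\le|\{v>0\}\cap B_r(x_0)|+O(\eta)$. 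For the Dirichlet term, $|a+b|^2\le(1+\delta)|a|^2+(1+1/\delta)|b|^2$ combined with convexity of $|\cdot|^2$ gives
\[
|\nabla v_n|^2\le (1+\delta)\bigl[\xi|\nabla v|^2+(1-\xi)|\nabla u_{\eps_n}|^2\bigr]+(1+1/\delta)|\nabla\xi|^2(v-u_{\eps_n})^2,
\]
and subtracting $(1+\delta)\int_{B_r}(1-\xi)|\nabla u_{\eps_n}|^2$ from both sides of the optimality inequality leads to
\begin{align*}
\int_{B_r}\xi|\nabla u_{\eps_n}|^2+\Lambda|\{u_{\eps_n}>0\}\cap B_r|&\le(1+\delta)\int_{B_r}\xi|\nabla v|^2+\delta\int_{B_r}(1-\xi)|\nabla u_{\eps_n}|^2\\
&\quad+\tfrac{C(1+1/\delta)}{\eta^2}\int_{A_\eta}(v-u_{\eps_n})^2+\Lambda|\{v>0\}\cap B_r|\\
&\quad+\beta\int_{\partial B_r(x_0)}u_{\eps_n}^2\,d\HH^{d-1}+O(\eta)+O(\eps_n),
\end{align*}
where $A_\eta:=B_{r-\eta}(x_0)\setminus B_{r-2\eta}(x_0)$.

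Pass to the limits in the order $n\to\infty$, $\eta\to 0$, $\delta\to 0$. On the left, $\liminf_n\int\xi|\nabla u_{\eps_n}|^2\ge\int\xi|\nabla u|^2$ by weak LSC of the weighted $L^2$-norm, and $|\{u>0\}\cap B_r|\le\liminf_n|\{u_{\eps_n}>0\}\cap B_r|$ by Fatou and pointwise a.e.\ convergence $u_{\eps_n}\to u$. On the right, strong $L^2$-convergence gives $\int(v-u_{\eps_n})^2\to\int(v-u)^2$, uniform convergence on the compact $\partial B_r(x_0)\subset D_{\delta'}$ gives $\int_{\partial B_r}u_{\eps_n}^2\to\int_{\partial B_r}u^2$, and $\delta\int(1-\xi)|\nabla u_{\eps_n}|^2\le\delta\sup_n\|\nabla u_{\eps_n}\|_{L^2}^2=:\delta C_0$. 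Next, as $\eta\to 0$, dominated convergence gives $\int\xi|\nabla u|^2\to\int_{B_r}|\nabla u|^2$ and $\int\xi|\nabla v|^2\to\int_{B_r}|\nabla v|^2$, while Hardy's inequality for $v-u\in H^1_0(B_r(x_0))$ controls the critical boundary-layer term: since $\text{dist}(\cdot,\partial B_r(x_0))\in[\eta,2\eta]$ on $A_\eta$,
\[
\frac{1}{\eta^2}\int_{A_\eta}(v-u)^2\,dx\le 4\int_{A_\eta}\frac{(v-u)^2}{\text{dist}(\cdot,\partial B_r(x_0))^2}\,dx\xrightarrow[\eta\to 0]{}0.
\]
Sending finally $\delta\to 0$ absorbs the residual $\delta C_0$ and restores $(1+\delta)\to 1$, yielding the claim. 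The main obstacle is the merely weak $L^2$-convergence $\nabla u_{\eps_n}\rightharpoonup\nabla u$, which precludes a direct limit of the Dirichlet term on the competitor side; the cutoff $\xi$ together with the $(1+\delta)$-convex expansion confines the discrepancy to the thin annulus $A_\eta$, where Hardy's inequality forces it to vanish.
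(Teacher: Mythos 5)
Your proof is correct and takes essentially the same route as the paper: test the optimality of the approximating minimizers $(u_{\eps_n},\Omega^1_{\eps_n},\Omega^2_{\eps_n})$ against a competitor that replaces $u_{\eps_n}$ by $v$ inside $B_r(x_0)$ together with the sets $\Omega^1_{\eps_n}\cup B_r(x_0)$ and $\Omega^2_{\eps_n}\setminus B_r(x_0)$, and then pass to the limit $n\to\infty$ using weak lower semicontinuity of the Dirichlet energy, Fatou for the measure term, and uniform convergence on $\partial B_r(x_0)$ for the surface term. The only difference is that the paper inserts $v$ directly, whereas you glue $v$ to $u_{\eps_n}$ via a cutoff and control the transition layer with Hardy's inequality; this is a more careful variant of the same argument, since it makes the competitor coincide with $u_{\eps_n}$ on $\partial^\ast\Omega^j_{\eps_n}\setminus B_r(x_0)$ and on $\partial B_r(x_0)$, so that the terms outside the ball cancel exactly and the surface discrepancy is genuinely bounded by $\beta\int_{\partial B_r(x_0)}u_{\eps_n}^2\,d\HH^{d-1}$.
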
	
\begin{proof}
	We can suppose that $v\ge 0$. Then, testing the minimality of $\big(u_{\eps_n},\Omega_{\eps_n}^1,\Omega_{\eps_n}^2\big)$, with the function $v$ (which we can do since $v-g\in H^1_0(D)$) and the sets 
	$$\Omega_{\eps_n}^1\cup  B_r(x_0)\qquad\text{and}\qquad \Omega_{\eps_n}^2\setminus B_r(x_0),$$
	we obtain
	\begin{align*}
	\int_{D}|\nabla u_{\eps_n}|^2\,dx&+\Lambda|D\cap\{u_{\eps_n}>0\}|\\
	&\le \int_{D}|\nabla v|^2\,dx+\Lambda|D\cap\{v>0\}|+\beta\int_{\partial B_r(x_0)}u_{\eps_n}^2\,d\HH^{d-1}.
	\end{align*}
	Passing to the limit as $n\to\infty$, we get the claim.
\end{proof}	

\begin{lemma}\label{l:upgrowth}
Let $u\in H^1(\R^d)$ be the function defined in \cref{sub:u_infty}. For every $\delta\in(0,1)$ and every $x_0\in  D_\delta\cap \{u=0\}$, we have 
$$\|u\|_{L^\infty(B_r(x_0))}\le Cr\quad\text{for every}\qquad r<\frac{\delta}{256},$$
where $C$ is a constant depending only on $d$, $\beta$, $\Lambda$ and $\|g\|_{L^\infty}$.
\end{lemma}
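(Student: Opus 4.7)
The plan is a two-step bootstrap using the growth estimate \cref{l:holder1} and the one-sided almost-minimality \cref{l:outwards}: first apply \cref{l:holder1} with $\alpha=0$ to obtain a preliminary $r^{1/2}$ bound at the free boundary point $x_0$, then plug this bound into the Robin boundary term and re-apply \cref{l:holder1} with $\alpha=1$.

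First I would derive an outward almost-minimality in exactly the form required by \cref{l:holder1}. Fix $x_0\in D_\delta\cap\{u=0\}$. For any $r\in(0,\delta)$ we have $\overline B_r(x_0)\subset D\subset\R^d\setminus\overline E_2$, so \cref{l:outwards} applies to every competitor of the form $v=u+\varphi$ with $\varphi\in H^1_0(B_r(x_0))$ and $\varphi\ge 0$. Since $\{u+\varphi>0\}\cap B_r(x_0)\subset B_r(x_0)$, after dropping the nonnegative term $\Lambda|\{u>0\}\cap B_r(x_0)|$ on the left, this yields
$$\int_{B_r(x_0)}|\nabla u|^2\,dx\le\int_{B_r(x_0)}|\nabla(u+\varphi)|^2\,dx+\Lambda\omega_d r^d+\beta\int_{\partial B_r(x_0)}u^2\,d\HH^{d-1}.$$

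For the initial bound, I would use the crude estimate $0\le u\le\|g\|_{L^\infty}$ to get $\int_{\partial B_r(x_0)}u^2\,d\HH^{d-1}\le d\omega_d\|g\|_{L^\infty}^2\,r^{d-1}$, and absorb $\Lambda\omega_d r^d\le\Lambda\omega_d r^{d-1}$ (valid for $r<1$) into the same $r^{d-1}$ term. This produces the hypothesis of \cref{l:holder1} with $\alpha=0$ and a constant $K_1$ depending only on $d,\beta,\Lambda,\|g\|_{L^\infty}$. Since $u$ is nonnegative and subharmonic in $D$ (by \cref{sub:u_infty}) and $u(x_0)=0$, \cref{l:holder1} delivers
$$\|u\|_{L^\infty(B_s(x_0))}\le C_1\, s^{1/2}\qquad\text{for every}\qquad s\le \delta/8,$$
with $C_1=C_1(d,\beta,\Lambda,\|g\|_{L^\infty})$.

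For the bootstrap, I would substitute the preliminary bound back into the boundary integral: by continuity, $u\le C_1 r^{1/2}$ on $\overline B_r(x_0)$ for any $r\le\delta/8$, so
$$\int_{\partial B_r(x_0)}u^2\,d\HH^{d-1}\le d\omega_d C_1^2\, r^d.$$
The almost-minimality derived above now holds with $\alpha=1$ and a constant $K_2=\omega_d(\Lambda+d\beta C_1^2)$ that still depends only on the admissible parameters. A second application of \cref{l:holder1}, with $r_0=\delta/8$, then yields
$$\|u\|_{L^\infty(B_r(x_0))}\le C\, r\qquad\text{for every}\qquad r\le \delta/64,$$
which comfortably covers the target range $r<\delta/256$.

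The argument is structurally routine, so the only care required is to keep the constants $K_1$, $C_1$, $K_2$ and the final $C$ dependent only on $d,\beta,\Lambda,\|g\|_{L^\infty}$, and to verify that the admissible radii in the two successive applications of \cref{l:holder1} are compatible with the target range (a factor $8$ is lost at each step of the growth lemma, and the resulting $\delta/64$ easily exceeds $\delta/256$).
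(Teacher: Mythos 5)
Your proposal is correct and follows essentially the same two-step bootstrap as the paper: a first application of \cref{l:outwards} and \cref{l:holder1} with $\alpha=0$ (using the crude bound $u\le\|g\|_{L^\infty}$ on the Robin term) to get the $r^{1/2}$ growth, then feeding that bound back into the boundary integral to apply \cref{l:holder1} with $\alpha=1$. The constant tracking and radius bookkeeping match the paper's argument.
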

\begin{proof}
We set for simplicity $M:=\|g\|_{L^\infty}$. By \cref{l:outwards}, we have that for every $r\in(0,\delta)$
\begin{align*}
\ds\int_{B_r(x_0)}|\nabla u|^2\,dx&\le \int_{B_r(x_0)}|\nabla (u+\varphi)|^2\,dx+\Lambda|B_r|+\beta\int_{\partial B_r(x_0)}u^2\,d\HH^{d-1}\\
&\le \int_{B_r(x_0)}|\nabla (u+\varphi)|^2\,dx+\Big(\Lambda\omega_d\delta +\beta d\omega_dM^2\Big)r^{d-1},
\end{align*}
for every $\varphi\in H^1_0(B_r(x_0))$ such that $\varphi\ge 0$ in $B_r(x_0)$. Applying \cref{l:holder1}, we obtain 
$$\|u\|_{L^\infty(B_r(x_0))}\le C_d\big(\Lambda +\beta M^2\big)^{\sfrac12}\,{r^{\sfrac{1}2}}\quad\text{for every}\quad r\le \frac{\delta}{8}.$$	
Using this estimate in \cref{l:outwards}, we get that, for every $r\in(0,\sfrac{\delta}{8})$,
\begin{align*}
\ds\int_{B_r(x_0)}|\nabla u|^2\,dx&\le \int_{B_r(x_0)}|\nabla (u+\varphi)|^2\,dx+\Lambda|B_r|+\beta\int_{\partial B_r(x_0)}u^2\,d\HH^{d-1}\\
&\le \int_{B_r(x_0)}|\nabla (u+\varphi)|^2\,dx+C_d\Big(\Lambda +\beta\big(\Lambda +\beta M^2\big)\Big)r^d,\end{align*}
for every nonnegative $\varphi\in H^1_0(B_r(x_0))$. Applying again \cref{l:holder1}, we get the claim.
\end{proof}

\section{Non-degeneracy of $u$}
In this section we show that $u$ is a subsolution of the Alt-Caffarelli functional. From this information, we can immediately deduce that $\{u>0\}$ has finite perimeter in $D_\delta$, for every $\delta>0$. Moreover, the suboptimality of $u$ implies that it is non-degenerate, which assures that the blow-up limits of $u$ are not identically zero.
\begin{lemma}\label{l:inwards}
	Let $u\in H^1(\R^d)$ be the function from \cref{sub:u_infty}. Then, 
	\begin{equation*}
	\ds\int_{D}|\nabla u|^2\,dx+\Lambda\big|\{u>0\}\cap D\big|\le \int_{D}|\nabla v|^2\,dx+\Lambda\big|\{v>0\}\cap D\big|,
	\end{equation*}
	for any $v\in H^1(\R^d)$ such that 
	$$v-u\in H^1_0(D)\qquad\text{and}\qquad 0\le v\le u\quad\text{in}\quad D.$$
\end{lemma}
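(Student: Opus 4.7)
The plan is to test the minimality of the approximating triples $(u_{\eps_n},\Omega^1_{\eps_n},\Omega^2_{\eps_n})$ from \cref{sub:omega12} against a competitor built from $v$. Setting $\phi:=u-v\in H^1_0(D)$ with $\phi\ge 0$, the natural candidate is
$$w_n:=(u_{\eps_n}-\phi)_+.$$
Since $\phi$ vanishes on $\partial D$ in the trace sense, $w_n=g$ on $\partial D$, so $w_n\in \V$; and since $w_n\le u_{\eps_n}$, one has $\{w_n>0\}\subset\{u_{\eps_n}>0\}\subset\Omega^1_{\eps_n}\cup\Omega^2_{\eps_n}$, so admissibility is preserved. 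Comparing $J_{\eps_n}$ on the two triples, the perimeter and $\eps$-penalty terms cancel, and since $w_n^2\le u_{\eps_n}^2$ on every $\partial^\ast\Omega^j_{\eps_n}$ the $\beta$-boundary integrals drop out favorably, leaving the pure Alt--Caffarelli comparison
$$\int_D|\nabla u_{\eps_n}|^2+\Lambda\big|\{u_{\eps_n}>0\}\cap D\big|\le \int_D|\nabla w_n|^2+\Lambda\big|\{w_n>0\}\cap D\big|.$$

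The pointwise bound $|\nabla w_n|^2\le|\nabla(u_{\eps_n}-\phi)|^2$ then gives, after subtracting $\int|\nabla u_{\eps_n}|^2$ from both sides and rearranging,
$$\Lambda\,\big|\{u_{\eps_n}>0\}\cap D\big|-\Lambda\,\big|\{w_n>0\}\cap D\big|\le -2\int_D\nabla u_{\eps_n}\cdot\nabla\phi+\int_D|\nabla\phi|^2.$$
As $n\to\infty$ the right-hand side converges, by weak $H^1$-convergence of $u_{\eps_n}$ and the identity $\int|\nabla u-\nabla\phi|^2=\int|\nabla u|^2-2\int\nabla u\cdot\nabla\phi+\int|\nabla\phi|^2$ applied with $u-\phi=v$, to the quantity $\int|\nabla v|^2-\int|\nabla u|^2$; combined with Fatou's lemma $\liminf|\{u_{\eps_n}>0\}\cap D|\ge|\{u>0\}\cap D|$, the desired subsolution inequality will follow once we establish the single measure-theoretic claim $\liminf_n|\{w_n>0\}\cap D|\le |\{v>0\}\cap D|$.

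This last step is the main obstacle: on the set $\{u>0,\,v=0\}$ one has $\phi=u$, and $\ind_{\{u_{\eps_n}>u\}}$ need not converge pointwise there, so a naive Fatou argument fails. I would overcome it by replacing $w_n$ with the perturbed competitor $(u_{\eps_n}-\phi-t\eta)_+$ for a non-negative cutoff $\eta\in C^\infty_c(D)$ and a parameter $t>0$, which retains the same admissibility properties. A coarea/Fubini argument shows that $|\{u=\phi+t\eta\}\cap\{\eta>0\}|=0$ for a.e.\ $t>0$; combined with the pointwise a.e.\ convergence $u_{\eps_n}\to u$, dominated convergence on the bounded region of interest then yields $|\{u_{\eps_n}>\phi+t\eta\}\cap D|\to|\{v>t\eta\}\cap D|$. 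Sending $t\downarrow 0$ by monotone convergence produces $|\{v>t\eta\}\cap D|\uparrow|\{v>0\}\cap D|$, and the trace condition $v=g=1$ on $\partial D$ confines $\{v=0\}\cap D$ away from $\partial D$ (possibly after first approximating $\phi$ by $C^\infty_c(D)$-functions in $H^1_0(D)$), so one can choose $\eta$ with $\eta=1$ on a neighborhood of $\{v=0\}\cap\{u>0\}$, removing any contribution from the residual region $\{\eta=0\}$ and closing the argument.
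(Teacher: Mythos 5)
Your proposal reaches the right conclusion but by a genuinely different and considerably longer route than the paper. The paper's proof is two lines: it tests the optimality of $(u_{\eps_n},\Omega^1_{\eps_n},\Omega^2_{\eps_n})$ directly with the competitor $(v,\Omega^1_{\eps_n},\Omega^2_{\eps_n})$, so that the right-hand side $\int_D|\nabla v|^2+\Lambda|\{v>0\}\cap D|$ is independent of $n$, and then only lower semicontinuity of the left-hand side (weak $L^2$ convergence of $\nabla u_{\eps_n}$ and Fatou for $|\{u_{\eps_n}>0\}|$) is needed. Your competitor $w_n=(u_{\eps_n}-(u-v))_+$ buys something real: since $w_n\le u_{\eps_n}$, the admissibility $\{w_n>0\}\subset\Omega^1_{\eps_n}\cup\Omega^2_{\eps_n}$ and the domination of the $\beta$-surface terms are automatic, whereas the paper's choice leaves both implicit ($v\le u$ does not give $v\le u_{\eps_n}$, so neither $\{v>0\}\subset\Omega^1_{\eps_n}\cup\Omega^2_{\eps_n}$ nor $v^2\le u_{\eps_n}^2$ on $\partial^\ast\Omega^j_{\eps_n}$ is immediate). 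The price is exactly the one you identify: the measure $|\{w_n>0\}|$ is no longer an $n$-independent quantity, and you must regularize with $t\eta$ to recover $|\{v>0\}|$ in the limit.

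One step of your closing argument does not work as stated. The assertion that the trace condition $v=1$ on $\partial D$ confines $\{v=0\}\cap\{u>0\}$ away from $\partial D$ is unjustified: an $H^1$ function with trace $1$ can vanish on a set of positive measure accumulating at $\partial D$, so no single $\eta\in C^\infty_c(D)$ need be positive a.e.\ on $\{v=0\}\cap\{u>0\}$ (and on $\{\eta=0\}\cap\{u=0\}$ the sets $\{u_{\eps_n}>0\}$ also escape control). The repair is standard and makes the argument cleaner: for fixed $\eta$ apply Fatou to the difference set, using $\{0<u<\phi+t\eta\}\supset\{u>0\}\cap\{v=0\}\cap\{\eta>0\}$ to get
\begin{equation*}
\Lambda\,\big|\{u>0\}\cap\{v=0\}\cap\{\eta>0\}\cap D\big|\le \int_D|\nabla (v-t\eta)|^2\,dx-\int_D|\nabla u|^2\,dx\,,
\end{equation*}
then send $t\to0$ and finally exhaust $D$ by the positivity sets of an increasing sequence $\eta_k$, concluding by monotone convergence. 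With this modification your proof is complete; it is simply a much heavier machine than the one the paper uses for this lemma.
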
	
\begin{proof}
	Testing the optimality of $\big(u_{\eps_n},\Omega_{\eps_n}^1,\Omega_{\eps_n}^2\big)$ with $\big(v,\Omega_{\eps_n}^1,\Omega_{\eps_n}^2\big)$, we obtain
	\begin{equation*}
	\ds\int_{D}|\nabla u_{\eps_n}|^2\,dx+\Lambda|\{u_{\eps_n}>0\}\cap D|\le \int_{D}|\nabla  v|^2\,dx+\Lambda|\{v>0\}\cap D|.
	\end{equation*}
	Passing to the limit as $n\to\infty$, we get the claim.
\end{proof}	

As an immediate consequence, we have 

\begin{cor}\label{l:downgrowth}
	Let $u\in H^1(\R^d)$ be the function defined in \cref{sub:u_infty}. Then:
	\begin{enumerate}[\quad\rm(i)]
	\item the set $\{u>0\}$ has locally finite perimeter in $D$;
	\item there is a constant $\eta>0$ such that for every $x_0\in D\cap \overline{\{u>0\}}$
	$$\|u\|_{L^\infty(B_r(x_0))}\ge \eta\,r\qquad\text{for every}\qquad B_{2r}(x_0)\subset D.$$
	\end{enumerate}	 
\end{cor}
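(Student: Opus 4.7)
Both claims are standard consequences of the Alt--Caffarelli subsolution property of Lemma \ref{l:inwards}, combined with the upper Lipschitz bound of Lemma \ref{l:upgrowth}. My plan is to prove (ii) first via a barrier argument, and then derive (i) from (ii) together with a further energy estimate coming from Lemma \ref{l:inwards}.

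For the non-degeneracy (ii), I would use a classical Alt--Caffarelli barrier argument. Fix $x_0\in D\cap\overline{\{u>0\}}$ and $B_{2r}(x_0)\subset D$, and suppose by contradiction that $\|u\|_{L^\infty(B_r(x_0))}<\eta r$ for a small $\eta>0$ to be chosen. Introduce the radial barrier
\[
w(x)=\frac{c}{2d}\bigl(r^2-|x-x_0|^2\bigr)_+,
\]
compactly supported in $\overline{B_r(x_0)}$ and satisfying $\Delta w=-c$ inside $B_r(x_0)$, with $c$ chosen proportional to $\eta/r$ so that $\max w$ is a definite multiple of $\eta r$ and $w>\eta r$ on a concentric sub-ball $B_{\rho r}(x_0)$ with $\rho\in(0,1)$ independent of $\eta$. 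The competitor $v:=(u-w)_+$ belongs to $H^1(\R^d)$, coincides with $u$ outside $B_r(x_0)$, and satisfies $0\le v\le u$ in $D$, so Lemma \ref{l:inwards} applies and yields
\[
\Lambda\bigl|\{0<u\le w\}\bigr|\ \le\ \int_D|\nabla v|^2\,dx-\int_D|\nabla u|^2\,dx.
\]
An integration by parts using $\Delta w=-c$ in $B_r(x_0)$ together with the explicit form of $\nabla w$ on $\partial B_r(x_0)$ reduces the right-hand side to a bound of the form $C\eta^2 r^d$, with $C$ depending only on $d$, $\Lambda$ and $\|g\|_{L^\infty}$. Since $w>\|u\|_{L^\infty(B_r(x_0))}$ on $B_{\rho r}(x_0)$, the set $\{u>0\}\cap B_{\rho r}(x_0)$ is contained in $\{0<u\le w\}$, and combining with the continuity of $u$ and $x_0\in\overline{\{u>0\}}$, choosing $\eta$ sufficiently small gives the desired contradiction.

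For the local finite perimeter (i), the Lipschitz bound of Lemma \ref{l:upgrowth} together with the non-degeneracy (ii) yields the uniform lower density estimate $|\{u>0\}\cap B_r(x_0)|\ge c_1 r^d$ for every $x_0\in\partial\{u>0\}\cap D_\delta$ and every small enough $r$. Next, I would test Lemma \ref{l:inwards} with the competitor $v=u(1-t\chi)$, where $\chi\in C^\infty_c(B_{2r}(x_0))$ is a standard cutoff with $\chi\equiv 1$ on $B_r(x_0)$ and $|\nabla\chi|\le C/r$. For $t\in(0,1)$ one has $\{v>0\}=\{u>0\}$, so the measure terms cancel; expanding the quadratic in $t$ and letting $t\to 0^+$ produces the fundamental energy estimate
\[
\int_{B_r(x_0)}|\nabla u|^2\,dx\ \le\ C r^{d-1}.
\]
Combined with the Lipschitz bound of Lemma \ref{l:upgrowth} and the coarea formula applied to the level sets $\{u=t\}$ for $t\to 0^+$, together with the non-degeneracy to control the thickness of the transition region, this yields $\HH^{d-1}(\partial\{u>0\}\cap B_r(x_0))\le C r^{d-1}$; a Vitali covering argument extends the bound to every compact $K\subset D$.

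The main difficulty I expect lies in closing the non-degeneracy step. Because Lemma \ref{l:inwards} provides only an inward, one-sided subsolution property, we do not have at this point the harmonicity of $u$ in $\{u>0\}$ nor the two-sided Euler--Lagrange identities. Consequently, the integration by parts with the barrier $w$ must be performed carefully, keeping track of the interior Laplacian contribution and of the normal derivative of $w$ on $\partial B_r(x_0)$, so as to balance the density gain $\Lambda|\{0<u\le w\}|$ against the $\eta$-small boundary integrals and close the contradiction with a universal constant.
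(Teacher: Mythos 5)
The paper does not prove this corollary at all: it simply cites \cite{altcaf} and \cite{velectures}, since both statements are the classical consequences of the subsolution property of \cref{l:inwards}. Your reconstruction follows exactly that classical route, and the competitors you use ($v=(u-w)_+$ for (ii), $v=u(1-t\chi)$ for (i)) are legitimate tests for \cref{l:inwards}. The energy bookkeeping in (ii) is also essentially right: keeping track of the singular part of $\Delta w$ on $\partial B_r(x_0)$ one indeed arrives at
$$\Lambda\,\big|\{0<u\le w\}\big|\;\le\;\frac{2cr}{d}\int_{\partial B_r(x_0)}u\,d\HH^{d-1}\;\le\;C\eta^2 r^d,$$
hence $|\{u>0\}\cap B_{\rho r}(x_0)|\le C\eta^2r^d/\Lambda$.

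The genuine gap is the very last step of (ii). The inequality $|\{u>0\}\cap B_{\rho r}(x_0)|\le C\eta^2 r^d$ does \emph{not} contradict $x_0\in\overline{\{u>0\}}$, no matter how small $\eta$ is: continuity of $u$ only tells you that $\{u>0\}$ is open and has \emph{positive} measure near $x_0$, with no quantitative lower bound available at this stage (the density estimate $|\{u>0\}\cap B_r|\ge c\,r^d$ is itself a consequence of non-degeneracy, so invoking it here would be circular). The classical proof closes the argument with an extra absorption/iteration step that you omit: since $u$ is subharmonic and bounded by $\eta r$ on $B_{\rho r}(x_0)$, the mean value property converts the small measure of $\{u>0\}$ into a much better sup bound, $\sup_{B_{\rho r/2}(x_0)}u\le \frac{|\{u>0\}\cap B_{\rho r}|}{|B_{\rho r/2}|}\sup_{B_{\rho r}}u\le C\eta^3 r$, and iterating this improvement on dyadic balls (for $\eta$ below a universal threshold) forces $u\equiv 0$ on a whole neighborhood of $x_0$ — \emph{that} is what contradicts $x_0\in\overline{\{u>0\}}$. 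Without this iteration the contradiction is not reached. Part (i) is fine in outline (Caccioppoli bound from \cref{l:inwards}, coarea on the level sets $\{u=t\}$, thickness of $\{0<u<\delta\}$ controlled by non-degeneracy plus the Lipschitz bound of \cref{l:upgrowth}, covering argument), but note that it genuinely depends on (ii), so the iteration step above must be supplied first.
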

\begin{proof}
See \cite{altcaf} or \cite{velectures}.	
\end{proof}	

\section{Density estimate and its consequences}\label{s:density}
In this section, we will show that the free boundary $\partial\{u>0\}$ is not touching $\partial E_1$ and $\partial E_2$. This is a crucial step in proving that $\Omega_1$ and $\Omega_2$ have finite perimeter in $\R^d$. The main result is the following.
\begin{prop}[Non-collapsing]\label{p:non-collapsing}
	Let $u\in H^1(\R^d)$ be the function from \cref{sub:u_infty}. Then, there is a positive constant $t>0$ such that $u\ge t$ in a neighborhood of $\overline E_1\cup \overline E_2$.
\end{prop}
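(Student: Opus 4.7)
I argue by contradiction and show that $\{u=0\}\cap D$ stays at a positive distance from $\partial E_1 \cup \partial E_2$, by pitting a harmonic upper barrier against the upgrowth bound of \cref{l:upgrowth}. Suppose there exist $y_n \in D$ with $u(y_n)=0$ and $d_n := \operatorname{dist}(y_n,\partial E_1) \to 0$ (the case near $E_2$ is symmetric). For $n$ large, the ball $B_r := B_{2d_n}(y_n)$ lies in $\R^d \setminus \overline E_2$, so we may use \cref{l:outwards}. As competitor I take $v_n$, the harmonic function on $B_r \setminus E_1$ with trace $u$ on $\partial B_r$ and trace $1$ on $\partial E_1 \cap B_r$, extended by $v_n \equiv 1$ on $E_1 \cap B_r$. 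The traces agree on both components of $\partial(B_r\setminus E_1)$, so $v_n - u \in H^1_0(B_r \setminus E_1)$, and $u \le v_n$ by the sub/harmonic comparison.

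Since $v_n$ is the harmonic replacement of $u$ in $B_r\setminus E_1$ (and $u,v_n$ are both constant $1$ inside $E_1$), I get the orthogonality $\int_{B_r}|\nabla(u-v_n)|^2 = \int_{B_r}|\nabla u|^2 - \int_{B_r}|\nabla v_n|^2$. Substituting into \cref{l:outwards} and using $|\{v_n > 0\}\cap B_r|=|B_r|$, I obtain the Dirichlet gap bound
\[
\int_{B_r}|\nabla(u-v_n)|^2 \;\le\; \Lambda\,|\{u=0\}\cap B_r| + \beta\!\int_{\partial B_r}\!u^2 \;\le\; C\,d_n^{\,d-1}.
\]
Because $B_r \setminus E_1$ has diameter $\sim d_n$, Poincaré gives $\int_{B_r}(u-v_n)^2 \le C\,d_n^{\,d+1}$. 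For the opposite estimate, the rescaling $x \mapsto (x-y_n)/d_n$ sends $B_r \setminus E_1$ to a domain converging in Hausdorff sense to $B_2 \setminus \{x\cdot\nu_\infty \ge 1\}$, whose harmonic measure of the flat face from the origin is a fixed positive constant; hence $v_n(y_n)\ge c_0 >0$ uniformly in $n$, and by Harnack's inequality $v_n \ge c_0/C_H$ on $B_{d_n/4}(y_n)$. Meanwhile \cref{l:upgrowth} applied at $y_n \in D_{d_n/2}\cap\{u=0\}$ gives $\|u\|_{L^\infty(B_{d_n/512}(y_n))} \le C\,d_n/512$, so for $n$ large $v_n - u \ge c_0/(2C_H)$ on $B_{d_n/512}(y_n)$, yielding $\int_{B_r}(u-v_n)^2 \ge C'\,d_n^{\,d}$. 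Comparing with the $L^2$ upper bound forces $d_n \ge C'/C >0$, the desired contradiction.

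This shows $\{u=0\}\cap D$ is at positive distance from $\overline E_1 \cup \overline E_2$, so $u>0$ on an open neighborhood $V$ of this set. To upgrade $u>0$ to $u \ge t$: on $V$ the interface $\partial^\ast\Omega_1 \cap \partial^\ast\Omega_2$ is absent (since $\Omega_{3-i}$ is far from $E_i$), so the Euler--Lagrange equation inherited from the approximants $u_\eps$ makes $u$ harmonic in $V$; together with the trace condition $u = 1$ on $\partial E_1 \cup \partial E_2$ and the smoothness of $\partial E_i$, Hopf's lemma produces the desired uniform lower bound on a slightly smaller neighborhood. The main obstacle is the careful matching of scales: the $O(d_n^{d-1})$ error term in \cref{l:outwards} combined with the $O(d_n^2)$ Poincaré factor must produce an $L^2$ estimate that genuinely beats the ball volume $d_n^d$ on which $v_n - u$ is uniformly positive, and obtaining the uniform harmonic-measure lower bound requires a compactness argument on the rescaled domains, which uses the smoothness of $\partial E_i$ crucially.
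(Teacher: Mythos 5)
Your main argument, showing that $\{u=0\}\cap D$ stays at a fixed positive distance from $\partial E_1\cup\partial E_2$, is correct and runs parallel to the paper's: both go by contradiction, both build the harmonic replacement anchored at the value $1$ on $E_1$ in a ball of radius comparable to $d_n=\operatorname{dist}(y_n,\partial E_1)$, and both extract the Dirichlet-gap bound $\int|\nabla(u-v_n)|^2\le \Lambda|\{u=0\}\cap B_r|+\beta\int_{\partial B_r}u^2\lesssim d_n^{d-1}$ from \cref{l:outwards}. The two proofs differ only in how the contradiction is closed. The paper uses the Alt--Caffarelli estimate $\frac{1}{R^2}|\{u=0\}\cap B_R|\big(\meantext{\partial B_R}u\big)^2\lesssim\int|\nabla(u-h)|^2$ together with the density estimate of \cref{l:density_estimate} at the zero point; you instead combine Poincar\'e (giving $\|u-v_n\|_{L^2}^2\lesssim d_n^{d+1}$) with a pointwise lower bound $v_n\ge c_0/C_H$ near $y_n$ (harmonic measure plus Harnack) and the Lipschitz upgrowth of \cref{l:upgrowth} (giving $u\lesssim d_n$ there), so that $\|u-v_n\|_{L^2}^2\gtrsim d_n^{d}$. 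The scales match and the conclusion is sound; your route trades the density estimate for the upgrowth lemma, both of which are available at this stage, so this is an equally valid and essentially equivalent mechanism.

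The weak point is the final upgrade from ``$u>0$ on a neighborhood $V$'' to ``$u\ge t$''. Your justification rests on the claim that the interface $\partial^\ast\Omega_1\cap\partial^\ast\Omega_2$ is absent in $V$ ``since $\Omega_{3-i}$ is far from $E_i$'', but nothing established so far prevents $\Omega_2$ from reaching into any neighborhood of $E_1$ (disjointness from $\Omega_1\supset E_1$ does not imply positive distance), so the harmonicity of $u$ in $V$ — and hence the Hopf/minimum-principle step — is unjustified; recall that across the interface $u$ is only subharmonic ($\Delta u=\beta u\,\HH^{d-1}\llcorner\Sigma\ge0$), so the minimum principle is exactly what fails. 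A fix that stays within your framework is to make the contradiction argument quantitative: assume $u(x_n)=\eps_n\to0$ with $d_n=\operatorname{dist}(x_n,\partial E_1)\to0$, replace \cref{l:upgrowth} by \cref{l:holder1} to get $\meantext{B_\rho(x_n)}u\le\eps_n+C\rho$, and compare averages of $u$ and $v_n$ over $B_{cd_n}(x_n)$ using the same $L^2$ bound; combined with continuity and compactness on the part of the neighborhood at distance $\ge\delta_0/4$ from $\partial D$, this yields the uniform $t$. To be fair, the paper's own proof is also written only for the qualitative statement ($u$ cannot vanish near $E_1$) and leaves this same upgrade implicit, but since you attempted it explicitly, the argument as given does not close.
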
	
The proof of \cref{p:non-collapsing} is based on the following lemma. 
\begin{lemma}[Density estimate]\label{l:density_estimate}
	Let $u\in H^1(\R^d)$ be the function from \cref{sub:u_infty}. There is a constant $c>0$ and $R_0>0$, depending only on $d$, $\beta$, $\Lambda$ and $\|g\|_{L^\infty}$, such that
	$$\big|B_R(x_0)\cap \{u=0\}\big|\ge c|B_R|\quad\text{for every}\quad B_R(x_0)\subset D\quad\text{with}\quad u(x_0)=0.$$
\end{lemma}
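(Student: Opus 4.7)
The plan is to argue by contradiction: I will suppose
\[
\delta := \frac{|B_R(x_0)\cap\{u=0\}|}{|B_R|}<c
\]
for a threshold $c>0$ to be determined, and derive a contradiction once $R$ is below a universal $R_0$. One may assume $x_0\in\partial\{u>0\}$, otherwise the lower bound is immediate. The central object is the harmonic replacement $h$ of $u$ on $B_R(x_0)$, i.e.\ $h$ harmonic on $B_R(x_0)$ with $h=u$ on $\partial B_R(x_0)$. Subharmonicity of $u$ gives $h\ge u\ge 0$, and by the strong maximum principle one may assume $h>0$ on $B_R$ (otherwise $u\equiv 0$ on $B_R$ and the estimate is trivial).

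First I would apply the almost-superminimality from \cref{l:outwards} to the competitor $h$ (extended by $u$ outside $B_R$), using $\{h>0\}\cap B_R=B_R$, to obtain the $L^2$-control
\[
\int_{B_R(x_0)}|\nabla(h-u)|^2\,dx \le \Lambda\delta|B_R| + \beta\int_{\partial B_R(x_0)}u^2\,d\HH^{d-1}.
\]
The linear growth at zero from \cref{l:upgrowth} (applied at a scale which is a dimensional fraction of $\mathrm{dist}(x_0,\partial D)$, possibly after passing to a smaller radius and transferring the density estimate back) bounds $u\le CR$ on $B_R(x_0)$, so the boundary term is of order $R^{d+1}$. The Poincar\'e inequality applied to $h-u\in H^1_0(B_R(x_0))$ then yields
\[
\int_{B_R(x_0)}(h-u)^2\,dx \le C_1(\delta+R)R^{d+2}.
\]

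Next I would produce a lower bound $h(x_0)\ge c_1 R$. Non-degeneracy (\cref{l:downgrowth}) applied at $x_0$ with radius $R/4$ yields $y_0\in B_{R/4}(x_0)$ with $u(y_0)\ge \eta R/4$, and since $h\ge u$ also $h(y_0)\ge \eta R/4$. Harnack's inequality for the positive harmonic function $h$ on $B_R(x_0)$ transfers this bound to the centre, giving $h(x_0)\ge c_1 R$. In parallel, the maximum principle together with $u\le CR$ on $\partial B_R$ yields $\|h\|_{L^\infty(B_R)}\le CR$, and the standard interior gradient estimate for harmonic functions gives $|\nabla h|\le C'$ on $B_{R/2}(x_0)$, with $C'$ independent of $R$.

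Finally, combining the uniform Lipschitz control of $h$ with the linear decay $u(x)\le C|x-x_0|$ near $x_0$ (again \cref{l:upgrowth}) produces the pointwise lower bound $(h-u)(x)\ge c_1R/2$ on the ball $B_{c_2 R}(x_0)$ with $c_2:=c_1/(2(C+C'))$. Integrating yields $\int_{B_R(x_0)}(h-u)^2\,dx\ge c_3 R^{d+2}$, and comparison with the $L^2$ upper bound from the first step forces $c_3\le C_1(\delta+R)$; choosing $R_0$ so that $C_1 R_0 < c_3/2$ then gives $\delta\ge c_3/(2C_1)=:c>0$, contradicting $\delta<c$. The main obstacle, and the reason the argument must use all the tools built up so far, is promoting the $L^2$-closeness of $u$ and its harmonic replacement $h$ (which comes from the almost-superminimality carrying the non-standard boundary penalty $\beta\int_{\partial B_R}u^2$) into a pointwise contradiction with $u(x_0)=0$; this is precisely where the quantitative combination of non-degeneracy plus Harnack at $x_0$, the interior regularity of $h$, and the linear decay of $u$ from its zeros comes into play.
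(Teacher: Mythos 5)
Your proof is correct and follows essentially the same route as the paper's: harmonic replacement of $u$ in a (suitably rescaled) ball, the energy comparison of \cref{l:outwards} with the extra term $\beta\int_{\partial B}u^2=O(R^{d+1})$ absorbed via the linear growth of \cref{l:upgrowth}, Poincar\'e to convert this into $L^2$-closeness of $u$ and $h$, and then playing the non-degeneracy of $h$ near $x_0$ against the linear decay of $u$ to force $|\{u=0\}\cap B_R|$ to be a definite fraction of $|B_R|$. The only (immaterial) difference is in the concluding step: the paper compares solid averages of $h$ and $u$ over a small concentric ball using the mean value property, whereas you derive a pointwise lower bound on $h-u$ via Harnack plus interior gradient estimates and then integrate.
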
	
\begin{proof}
We notice that by \cref{l:upgrowth}, we have that	
\begin{equation}\label{e:dens-lemma-bound-from-above}
\|u\|_{L^\infty(B_\rho(x_0))}\le C\rho\quad\text{for every}\qquad \rho<\frac{R}{256}.
\end{equation}
Now, we consider the competitor 
$$\widetilde \Omega_1:=\Omega_1\cup B_{\sfrac{R}{256}}(x_0)\ ,\quad \widetilde \Omega_2:=\Omega_2\setminus B_{\sfrac{R}{256}}(x_0)\ ,\quad \widetilde u(x):=\begin{cases}u(x)\quad \text{if}\quad x\in \R^d\setminus B_{\sfrac{R}{256}}(x_0)\\
h(x)\quad \text{if}\quad x\in B_{\sfrac{R}{256}}(x_0)
\end{cases},$$
$h$ being the harmonic extension of $u$ in $B_{\sfrac{R}{256}}(x_0)$.
Setting for simplicity $r=\frac{R}{256}$ and $x_0=0$, by \cref{l:outwards}, we have that 
	\begin{align*}
\ds\int_{B_r}|\nabla u|^2\,dx+\Lambda|B_r\cap\{u>0\}|
\le \int_{B_r}|\nabla h|^2\,dx+\Lambda|B_r\cap\{h>0\}|+\beta d\omega_dr^{d-1}C^2r^2.
\end{align*}
The rest of the proof follows the analogous lemma from \cite{altcaf}. Using the fact that $h$ is harmonic and strictly positive in $B_r$, we have 
	\begin{align*}
\ds\int_{B_r}|\nabla (u-h)|^2\,dx \le \Lambda|B_r\cap\{u=0\}|+\beta C^2 d\omega_dr^{d+1}.
\end{align*}
By the Poincaré inequality, there is a dimensional constant $C_d$ such that 
	\begin{align*}
\left(\frac1{|B_r|}\int_{B_r}(h-u)\,dx\right)^2\le \frac1{|B_r|}\int_{B_r}(h-u)^2\,dx \le \frac{C_dr^2}{|B_r|}\Big(\Lambda|B_r\cap\{u=0\}|+\beta C^2 r^{d+1}\Big).
\end{align*}
On the other hand, the combination of \cref{l:downgrowth} and the subharmonicity of $u$ in $B_R$, gives that for any $\kappa\in(0,1)$
$$\frac1{|B_{\kappa r}|}\int_{B_{\kappa r}}h(x)\,dx=\mean{\partial B_r}u\ge 2^{-d}cr.$$
On the other hand, \eqref{e:dens-lemma-bound-from-above} implies that
$$\frac1{|B_{\kappa r}|}\int_{B_{\kappa r}}u(x)\,dx\le C\kappa r.$$ 
Thus, 
	\begin{align*}
(c-C2^d\kappa)^2\le \frac{C_d}{|B_r|}\Big(\Lambda|B_r\cap\{u=0\}|+\beta C^2 r^{d+1}\Big).
\end{align*}
Choosing $\kappa$ and $R>0$ small enough, we get the claim.
 \end{proof}	

\begin{proof}[\bf Proof of \cref{p:non-collapsing}]
Suppose that $x_0\in D$ is such that 
$$u(x_0)=0\qquad\text{and}\qquad \delta:=\text{\rm dist}(x_0,\partial E_1)<\delta_0,$$
where the constant $\delta_0>0$ will be chosen later. Let $y_0$ be the projection of $x_0$ on $E_1$ and let $R:=2|x_0-y_0|$. We consider the competitor
$$\widetilde \Omega_1:=\Omega_1\cup B_{R}(y_0)\ ,\quad \widetilde \Omega_2:=\Omega_2\setminus B_{R}(y_0)\ ,\quad \widetilde u(x):=\begin{cases}u(x)\quad \text{if}\quad x\in \R^d\setminus \big(B_{R}(y_0)\cap D\big)\\
h(x)\quad \text{if}\quad x\in B_{R}(y_0)\cap D
\end{cases},$$
$h$ being the harmonic extension of $u$ in $B_{R}(y_0)\cap D$. We notice that $\widetilde u$ is the solution to 
$$\min\Big\{\int_{B_R(y_0)}|\nabla v|^2\,dx\ :\ u-v\in H^1_0\big(B_R(y_0)\big)\Big\}.$$
Thus, by \cite[Lemma 3.7]{velectures} and the fact that $u=g$ on $E_1$, we have that 
\begin{equation}\label{e:potential_estimate}
\frac{1}{R^2}\big|\{u=0\}\cap B_R(y_0)\big|\left(\mean{\partial B_R(y_0)}{u\,d\HH^{d-1}}\right)^2\leq C_d\int_{B_R(y_0)}|\nabla (u-h)|^2\,dx.
\end{equation}
On the other hand, using Lemma \ref{l:outwards}, we get 
\begin{equation}\label{e:non-collapsing-minimality}
\ds\int_{B_R(y_0)}|\nabla (u-h)|^2\,dx\le \Lambda|B_R(y_0)\cap\{u=0\}|+\beta\int_{\partial B_R(y_0)}u^2d\HH^{d-1}.
\end{equation}
 Now, since $u\equiv 1$ on $E_1$ and $u\le 1$ in $\R^d$, we get that 
$$\frac{1}{C_1}\le \mean{\partial B_R(y_0)}{u\,d\HH^{d-1}}\qquad\text{and}\qquad \int_{\partial B_R(y_0)}u^2d\HH^{d-1}\le d\omega_dR^{d-1},$$
where $C_1>0$ is a constant depending only on $E_1$ (notice that since $E_1$ is regular, for $\delta>0$ small, we can choose $C_1\simeq 2$). Thus, combining \eqref{e:potential_estimate} and \eqref{e:non-collapsing-minimality}, we get 
\begin{equation*}
\frac{1}{R^2}\big|\{u=0\}\cap B_R(y_0)\big|\le C_1^2 C_d\Big(\Lambda+\frac{\beta}{R}\Big)|B_R|\,,
\end{equation*}
which by the density estimate \cref{l:density_estimate}, gives a contradiction when $R$ is small enough.
\end{proof}	
\section{Regularity of the free boundary $\partial\{u>0\}$ }\label{s:regularity}

In this section we prove the following. 
\begin{teo}\label{t:regularity-free-boundary}
	Let $u\in H^1(\R^d)$ be the function defined in \cref{sub:u_infty} and let $d=2$. Then $\partial \{u>0\}\cap D$ is a $C^{1,\alpha}$ regular $(d-1)$-dimensional manifold.
\end{teo}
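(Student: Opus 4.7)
The plan is to recognize $u$ as an almost-minimizer of the classical Alt--Caffarelli one-phase functional
\[
J^{AC}(v, B) := \int_{B}|\nabla v|^2\,dx+\Lambda|B\cap\{v>0\}|,
\]
with an error term of strictly higher order than the natural scaling, and then invoke the standard regularity theory for almost-minimizers of $J^{AC}$, which in dimension $d=2$ yields $C^{1,\alpha}$-regularity at every free boundary point.

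\textbf{Step 1: Almost-minimality of $u$ for $J^{AC}$.} Fix $x_0\in D\cap\partial\{u>0\}$ and set $\delta:=\tfrac12\,\text{dist}(x_0,\partial D)$, so that $B_r(x_0)\subset D_\delta$ for $r<\delta$. Since $u(x_0)=0$, \cref{l:upgrowth} provides a constant $C>0$, depending on $d,\beta,\Lambda,\|g\|_{L^\infty}$, such that $\|u\|_{L^\infty(B_r(x_0))}\le Cr$ for all sufficiently small $r$. Consequently,
\[
\beta\int_{\partial B_r(x_0)}u^2\,d\HH^{d-1}\le \beta d\omega_d C^2\,r^{d+1}.
\]
Plugging this estimate into \cref{l:outwards}, we obtain that for every $v\in H^1(\R^d)$ with $v-u\in H^1_0(B_r(x_0))$,
\[
J^{AC}\bigl(u,B_r(x_0)\bigr)\le J^{AC}\bigl(v,B_r(x_0)\bigr)+K\,r^{d+1},
\]
for a constant $K$ independent of $v$ and $r$. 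This is an almost-minimality of $J^{AC}$ at $x_0$ with decay exponent two powers better than the natural surface scaling $r^{d-1}$.

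\textbf{Step 2: Blow-up analysis.} Consider the rescalings $u_{x_0,r}(x):= r^{-1}u(x_0+rx)$. By \cref{l:upgrowth} the family $\{u_{x_0,r}\}$ is equi-Lipschitz and vanishes at the origin, and by the non-degeneracy \cref{l:downgrowth} no subsequential limit is identically zero. Rescaling the inequality from Step 1 yields
\[
J^{AC}(u_{x_0,r},B_R)\le J^{AC}(w,B_R)+K\,r\,R^{d+1},
\]
for every $R>0$, every small $r>0$ and every $w$ with $w-u_{x_0,r}\in H^1_0(B_R)$. Extracting a locally uniformly convergent subsequence $u_{x_0,r_k}\to u_0$ and using the standard fact that non-degeneracy forces $L^1_{\mathrm{loc}}$-convergence of the positivity sets, we conclude that $u_0$ is a non-trivial global minimizer of $J^{AC}$ on $\R^d$ with $0\in\partial\{u_0>0\}$.

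\textbf{Step 3: Regularity in dimension two and main obstacle.} In $d=2$, the classical Alt--Caffarelli theorem \cite{altcaf} classifies every such global minimizer as a half-plane solution $u_0(x)=\sqrt{\Lambda}\,(x\cdot\nu)_+$ with $\nu\in\mathbb S^1$. Hence every blow-up of $u$ at every free boundary point is flat, so the flatness hypothesis of the improvement-of-flatness theorem for almost-minimizers of the one-phase Bernoulli problem (De~Silva--Savin, David--Engelstein--Toro, or the viscosity-solution approach of De~Silva adapted to almost-minimizers) is satisfied at every $x_0\in D\cap\partial\{u>0\}$, with admissible error exponent $d+1$ in Step~1. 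This yields that $\partial\{u>0\}\cap D$ is a $C^{1,\alpha}$-regular $1$-dimensional manifold. The main obstacle is Step~1: absorbing the Robin interface term $\beta\int_{\partial^\ast\Omega_1\cap\partial^\ast\Omega_2}u^2$ into a higher-order perturbation of $J^{AC}$, which is made possible by pushing the Robin contribution onto $\partial B_r$ via the outward competitor and controlling it by the Lipschitz bound. Once the $r^{d+1}$ error has been identified, everything else is a direct application of the by-now standard regularity theory (sharpened by the fact that in dimension two there are no singular minimizing cones).
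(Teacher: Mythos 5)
Your Steps 1 and 2 coincide with the paper's: the $r^{d+1}$ almost-minimality at free boundary points, obtained by combining \cref{l:outwards} with the linear growth of \cref{l:upgrowth}, and the blow-up analysis yielding half-plane limits, are exactly what the paper does. The genuine gap is in Step 3. You invoke the improvement-of-flatness machinery for almost-minimizers of the one-phase problem (De Silva--Savin, David--Engelstein--Toro, David--Toro), but none of these results applies directly here, for a reason the paper is explicit about: your almost-minimality inequality only holds for balls \emph{centered at points of} $\partial\{u>0\}$. At a point $x_0$ in the interior of $\{u>0\}$ the error term $\beta\int_{\partial B_r(x_0)}u^2\,d\HH^{d-1}$ coming from \cref{l:outwards} is of order $r^{d-1}$, not $r^{d+1}$, because $u$ does not vanish there; and this cannot be improved, since $u$ genuinely fails to be harmonic --- indeed it is not even $C^1$ --- inside $\{u>0\}$, its gradient jumping across the transmission interface $\partial\Omega_1\cap\partial\Omega_2$. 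The cited almost-minimizer theories all rely on interior information (Lipschitz estimates and almost-harmonicity of $u$ in the positivity set, or the viscosity formulation of $\Delta u=0$ in $\{u>0\}$) that is unavailable here, so ``a direct application of the by-now standard regularity theory'' does not close the argument.

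The paper circumvents this by running the Weiss monotonicity formula together with the two-dimensional epiperimetric inequality of \cite{SV}: both the $1$-homogeneous extension $z_r$ and the epiperimetric competitor $h_r$ are perturbations supported in balls centered at the free boundary point, so only the restricted almost-minimality \eqref{e:almost-minimality} is ever used. This yields the decay $W(u_r)\le C_0 r^\gamma$ of the Weiss energy, hence uniqueness of the blow-up with a rate and the $C^{1,\alpha}$ regularity of $\partial\{u>0\}$; it is also the reason the theorem is stated only for $d=2$. To repair your proof you would either have to reproduce this epiperimetric argument, or carefully re-derive an improvement-of-flatness statement that uses almost-minimality only at free boundary centers and makes no interior regularity assumption on $u$.
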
	
The proof is based on the fact that $u$ satisfies an almost-minimality condition in $D$. Precisely, by \cref{l:holder1} and \cref{l:outwards}, we have that, given a compact set $K\subset D$, there are
constants $C>0$ and $r_0>0$ for which 
\begin{align}
\ds\int_{B_r(x_0)}|\nabla u|^2\,dx&+\Lambda|B_r(x_0)\cap\{u>0\}|\notag\\
&\le \int_{B_r(x_0)}|\nabla v|^2\,dx+\Lambda|B_r(x_0)\cap\{v>0\}|+Cr^{d+1},\label{e:almost-minimality}
\end{align}
for every $r<r_0$,  $x_0\in\partial\{u>0\}\cap D$ and $v\in H^1(B_r(x_0))$ such that $u-v\in H^1_0(B_r(x_0))$. In \cref{sub:blow-up}, we use the almost-minimality to show that when $d\le 4$ every blow-up $u$ is a half-plane solution (that is, solution of the form \eqref{e:blow-up-limits}) of the classical Alt-Caffarelli functional. Then, in \cref{sub:epi}, we show that in dimension $d=2$, we can use the epiperimetric inequality from \cite{SV} to conclude the proof of \cref{t:regularity-free-boundary}.
\begin{oss}
We notice that the function $u$ might not be smooth in the open set $\{u>0\}$. In fact, $u$ is not even $C^1$ as the gradient is not continuous across $\partial\Omega_1\cap\partial\Omega_2$. We stress that we can still use the 2D epiperimetric inequality from \cite{SV} together with the almost-minimality condition \eqref{e:almost-minimality} to prove the $C^{1,\alpha}$ regularity of the free boundary, but we cannot improve this regularity to $C^\infty$. 
\end{oss}

\begin{oss}
	We expect \cref{t:regularity-free-boundary} to hold in every dimension $2\le d\le 4$, as there are several epsilon-regularity results for functions $u$ satisfying almost-minimality conditions similar to \eqref{e:almost-minimality} (see for instance \cite{DT,DET,DS,STV}), but we stress that non of these results directly apply to \eqref{e:almost-minimality}. In fact, the almost-minimality of $u$ only holds around points at the boundary $\partial\{u>0\}$ (and in our case $u$ is not even $C^{1,\alpha}$ in $\{u>0\}$), which essentially requires \cite{DT,DET,DS,STV} to be revisited in order to be used in our context. We choose the approach from \cite{STV} which limits \cref{t:regularity-free-boundary} to the case $d=2$, but on the other hand is based on the epiperimetric inequality from \cite{SV}, which works without any modifications in our case.
\end{oss}

\subsection{Blow-up sequences and blow-up limits}\label{sub:blow-up}
Let $x_0\in\partial\{u>0\}\cap D$. We define
$$u_r(x):=\frac1ru(x_0+rx).$$
Let $r_n$ be an infinitesimal sequence. 
Then, for $n$ large enough, the sequence 
$u_{r_n}$ is uniformly bounded in $L^\infty$ in every ball $B_{2R}\subset\R^d$. Moreover, by \cref{l:holder2}, $u_{r_n}$ is uniformly bounded also in $C^{0,\sfrac13}(B_R)$. Thus, up to a (non-relabelled) subsequence $u_{r_n}$ it converges uniformly in $B_R$). By a diagonal sequence argument, there is a continuous function 
$$u_0:\R^d\to\R$$
and a subsequence $u_{r_n}$ such that $u_{r_n}$ converges to $u_0$ uniformly on every ball $B_R\subset \R^d$. We will say that $u_0$ is a blow-up limit of $u$ at $x_0$. 

\begin{prop}
Let $2\le d\le 4$ and let $u$ be the function from \cref{sub:u_infty}. Then every blow-up limit $u_0:\R^d\to\R$ of $u$ at a point $x_0\in\partial\{u>0\}\cap D$ is of the form 
\begin{equation}\label{e:blow-up-limits}
u_0(x)=\sqrt{\Lambda}\, (x\cdot\nu)_+\ ,
\end{equation}
for some unit vector $\nu\in\R^d$.
\end{prop}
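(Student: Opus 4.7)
The plan follows the standard blow-up strategy for almost-minimizers of Alt-Caffarelli type functionals. Since the almost-minimality condition \eqref{e:almost-minimality} carries a remainder of order $r^{d+1}$, which is strictly of higher order than the natural $r^{d-1}$ scaling of $J_{\sqrt\Lambda}$, a rescaling argument should produce global minimizers of the classical Alt-Caffarelli functional in the limit, to which one can then apply the known classification of homogeneous minimizers.

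\textbf{Step 1 (compactness of blow-ups).} For $x_0\in\partial\{u>0\}\cap D$, set $u_r(x):=r^{-1}u(x_0+rx)$. By \cref{l:upgrowth} the family $\{u_r\}$ is uniformly Lipschitz on every compact subset of $\R^d$ for $r$ small; combined with the uniform $H^1_{\mathrm{loc}}$ bound obtained by testing the almost-minimality against the harmonic replacement of $u_r$ in balls, we upgrade the uniform convergence $u_{r_n}\to u_0$ to strong convergence in $H^1_{\mathrm{loc}}(\R^d)$, and we get $L^1_{\mathrm{loc}}$ and Hausdorff convergence of the positivity sets $\{u_{r_n}>0\}\to\{u_0>0\}$.

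\textbf{Step 2 (the blow-up is a global Alt-Caffarelli minimizer).} The change of variables $y=x_0+rx$ rescales \eqref{e:almost-minimality} on $B_{r\rho}(x_0)$ to
\[
\int_{B_\rho}|\nabla u_r|^2\,dx+\Lambda\big|B_\rho\cap\{u_r>0\}\big|\le \int_{B_\rho}|\nabla w|^2\,dx+\Lambda\big|B_\rho\cap\{w>0\}\big|+Cr\rho^d,
\]
for every competitor $w$ with $u_r-w\in H^1_0(B_\rho)$. Letting $r=r_n\to 0$ and using Step 1 we conclude that $u_0$ is a global minimizer of $J_{\sqrt\Lambda}$ in every ball of $\R^d$. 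The non-triviality $u_0\not\equiv 0$ follows from the non-degeneracy \cref{l:downgrowth}(ii) together with $\|u_0\|_{L^\infty(B_1)}\ge\eta$.

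\textbf{Step 3 (1-homogeneity via Weiss).} Introduce the boundary-adjusted Weiss energy
\[
W(u;x_0,r):=\frac{1}{r^d}\int_{B_r(x_0)}|\nabla u|^2\,dx+\frac{\Lambda}{r^d}\big|B_r(x_0)\cap\{u>0\}\big|-\frac{1}{r^{d+1}}\int_{\partial B_r(x_0)}u^2\,d\HH^{d-1}.
\]
For genuine minimizers of $J_{\sqrt\Lambda}$ the map $r\mapsto W(u;x_0,r)$ is non-decreasing. Under the almost-minimality \eqref{e:almost-minimality} one shows that there is $\alpha>0$ for which $\frac{d}{dr}W(u;x_0,r)\ge -Cr^\alpha$, so that $W(u;x_0,0^+)$ exists. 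The scaling identity $W(u_r;0,\rho)=W(u;x_0,r\rho)$ then implies $W(u_0;0,\rho)\equiv W(u;x_0,0^+)$, and since $u_0$ minimizes $J_{\sqrt\Lambda}$ globally, constancy of $W$ in $\rho$ forces $u_0$ to be $1$-homogeneous.

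\textbf{Step 4 (classification).} A non-trivial $1$-homogeneous global minimizer of $J_{\sqrt\Lambda}$ vanishing at the origin must be of the form $\sqrt\Lambda(x\cdot\nu)_+$. In dimension $d=2$ this is elementary (a $1$-homogeneous continuous subharmonic function is determined by its restriction to $S^1$, and minimality rules out any configuration other than a single half-disc with the correct slope). The cases $d=3$ and $d=4$ are the classification theorems of Caffarelli-Jerison-Kenig and Jerison-Savin respectively. Combining Steps 1-3 with this classification yields \eqref{e:blow-up-limits}.

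The main obstacle is Step 3: establishing the almost-monotonicity of the Weiss energy under \eqref{e:almost-minimality}. The standard proof for Alt-Caffarelli minimizers compares $u$ on $B_r$ with the $1$-homogeneous extension of its trace on $\partial B_r$ and extracts monotonicity from the resulting Pohozaev-type identity; here one must absorb the extra $Cr^{d+1}$ error in a way that produces an integrable correction in $r$, so that both the limit $W(u;x_0,0^+)$ exists and the constancy-equals-homogeneity rigidity for $u_0$ (which is a bona fide minimizer) can be invoked.
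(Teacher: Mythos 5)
Your proposal is correct and follows essentially the same route as the paper: the paper's proof likewise deduces from the almost-minimality condition \eqref{e:almost-minimality} that any blow-up limit is a non-trivial local minimizer of the Alt--Caffarelli functional (citing \cite{altcaf,velectures}), obtains $1$-homogeneity via the Weiss-type monotonicity argument of \cite{STV}, and concludes by the classification of homogeneous minimizers in $d\le 4$ from \cite{CJK} and \cite{JS}. The only cosmetic difference is that the paper delegates your Steps 2--4 entirely to these references, whereas you sketch them; note also that \cref{l:upgrowth} gives only the linear growth of $\sup_{B_r}u$ at free boundary points (uniform H\"older compactness of the rescalings, as in the paper), not a genuine uniform Lipschitz bound, but this does not affect the argument.
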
	
\begin{proof}
Let $u_{r_n}$ be a blow-up sequence converging to $u_0$. We notice that, by \cref{l:downgrowth}, $u_0$ is non-trivial. Moreover, using the almost-minimality condition \eqref{e:almost-minimality} we get that 
 $u_0$ is a local minimizer of the Alt-Caffarelli functional (see for instance \cite{altcaf} of \cite{velectures}). Precisely, 
$$\int_{B_R}|\nabla u|^2\,dx+\Lambda|B_R\cap\{u>0\}|\le \int_{B_R}|\nabla v|^2\,dx+\Lambda|B_R\cap\{v>0\}|,$$
for every $B_R\subset\R^d$ and every $v\in H^1(B_R)$ such that $u-v\in H^1_0(B_R)$. Moreover, the almost-minimality condition \eqref{e:almost-minimality} implies that every blow-up limit $u_0$ is $1$-homogeneous (see \cite{STV}). When $d\le 4$, using \cite{CJK} and \cite{JS}, this gives that every blow-up limit $u_0$ is of the form \eqref{e:blow-up-limits}.
\end{proof}

\subsection{Epiperimetric inequality and regularity of $\partial\{u>0\}$}\label{sub:epi}
For any $\varphi\in H^1(B_1)$ we consider the Weiss' boundary adjusted energy introduced in \cite{weiss}
$$W(\varphi):=\int_{B_1}|\nabla \varphi|^2\,dx+\Lambda|\{\varphi>0\}\cap B_1|-\int_{\partial B_1}\varphi^2\,d\HH^{d-1}.$$
Let $K$ be a compact set contained in $D$ and let $C>0$ and $r_0$ be the constants from the almost-minimality condition \eqref{e:almost-minimality}. Let $x_0\in\partial\{u>0\}\cap K$ be fixed and let $u_r(x):=\frac1ru(x_0+rx)$. Then, the derivative of $W(u_r)$ is given by
$$\frac{\partial}{\partial r}W(u_r)=\frac{d}{r}\big(W(z_r)-W(u_r)\big)+\frac{1}{r}\int_{\partial B_1}|x\cdot\nabla u-u|^2\,d\HH^{d-1}(x),$$
where $z_r:B_1\to\R$, $z_r(x):=|x|u_r\big(\sfrac{x}{|x|}\big)$, is the $1$-homogeneous extension of $u_r$ in $B_1$. Now, by the 2D epiperimetric inequality of \cite{SV}, we have that there is a constant $\eps\in(0,1)$ such that for every $r>0$, there exists a function $h_r:B_1\to\R$ with $h_r=u_r=z_r$ on $\partial B_1$ and 
$$W(h_r)-\Theta\le (1-\eps)\big(W(z_r)-\Theta\big)\,,\quad \text{where} \quad\Theta:=\Lambda\frac{|B_1|}2.$$
Now, using the almost minimality \eqref{e:almost-minimality} of $u$, we have that for every $r\le r_0$
\begin{align*}
\frac{\partial}{\partial r}W(u_r)&= \frac{d}{r}\big(W(z_r)-W(u_r)\big)+\frac1r\int_{\partial B_1}|x\cdot\nabla u-u|^2\,d\HH^{d-1}(x)\\
&= \frac{d}{r}\Big(\big(W(z_r)-\Theta\big)-\big(W(u_r)-\Theta\big)\Big)+\frac1r\int_{\partial B_1}|x\cdot\nabla u-u|^2\,d\HH^{d-1}(x)\\
&\ge  \frac{d}{r}\Big(\frac1{1-\eps}\big(W(h_r)-\Theta\big)-\big(W(u_r)-\Theta\big)\Big)+\frac1r\int_{\partial B_1}|x\cdot\nabla u-u|^2\,d\HH^{d-1}(x)\\
&\ge  \frac{d}{r}\Big(\frac1{1-\eps}\big(W(u_r)-\Theta\big)-\frac{Cr}{1-\eps}-\big(W(u_r)-\Theta\big)\Big)+\frac1r\int_{\partial B_1}|x\cdot\nabla u-u|^2\,d\HH^{d-1}(x)\\
&\ge  \frac{d}{r}\frac{\eps}{1-\eps}\big(W(u_r)-\Theta\big)-\frac{dC}{1-\eps}+\frac1r\int_{\partial B_1}|x\cdot\nabla u-u|^2\,d\HH^{d-1}(x).
\end{align*}
Taking $\ds\gamma=\frac{d\,\eps}{1-\eps}$, we get that 
$$\frac{\partial}{\partial r}\left(\frac{W(u_r)}{r^\gamma}+\frac{dCr^{1-\gamma}}{1-(d+1)\eps}\right)\ge \frac1{r^{1+\gamma}}\int_{\partial B_1}|x\cdot\nabla u-u|^2\,d\HH^{d-1}(x),$$
which implies that 
$$W(u_r)\le C_0r^\gamma\ ,\qquad\text{where}\qquad C_0:=\frac{W(u_{r_0})}{r_0^\gamma}+\frac{dCr_0^{1-\gamma}}{1-(d+1)\eps}.$$
By a standard argument (see for instance \cite{SV}, \cite{STV} or \cite{velectures}), this implies the uniqueness of the blow-up limit at $x_0$ and the $C^{1,\alpha}$-regularity of $\partial\{u>0\}$ in $K$, which concludes the proof of \cref{t:regularity-free-boundary}.\qed

\section{Regularity of the free interface}

In this section we prove the following theorem. 

\begin{teo}\label{t:regularity-free-interface}
Let $u$ be the function from \cref{sub:u_infty} and let $\Omega_1$ and $\Omega_2$ be the sets constructed in \cref{sub:omega12}. Then,
\begin{enumerate}[\quad\rm(i)]
\item in any dimension $d\ge 2$, the free interface $\partial^\ast\Omega_1\cap \big(D\cap\{u>0\}\big)$ is a $C^{\infty}$ manifold up to a closed singular set of Hausdorff dimension at most $d-8$;
\item in dimension $d=2$, the contact set $\partial\Omega_1\cap\partial\Omega_2\cap (D\cap\partial\{u>0\})$ is discrete in $D$;
\item in dimension $d=2$, in a neighborhood of every point $x\in \partial\Omega_1\cap\partial\Omega_2\cap (D\cap\partial\{u>0\})$ the boundary  $\partial\Omega_1\cap\partial\Omega_2\cap\{u>0\}$ is a $C^\infty$ curve, $C^1$ regular up to the endpoint $x$, and attaches orthogonally to $\partial\{u>0\}$ at $x$.
\end{enumerate}
\end{teo}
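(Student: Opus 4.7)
The strategy is to exhibit $\Omega_1$ (and symmetrically $\Omega_2$) as an almost-minimizer of a weighted perimeter with weight $u^2$. Inside $D\cap\{u>0\}$ this weight is uniformly positive and H\"older continuous, yielding (i) by classical theory. Near $\partial\{u>0\}$ the weight degenerates like $\mathrm{dist}^2$, and in dimension two we handle this by a conformal flattening combined with a rotational lift to $\R^4$.

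For (i), pick $x_0\in D\cap\{u>0\}$ and $\overline{B_r(x_0)}\subset D\cap\{u>0\}$; since $u$ is continuous and strictly positive on this compact set, there exist constants $0<c\le C$ with $c\le u\le C$ there. For any measurable $F\subset B_r(x_0)$, the swap competitor $\Omega_1'=\Omega_1\triangle F$, $\Omega_2'=\Omega_2\triangle F$ keeps $\Omega_1'\cup\Omega_2'=\Omega_1\cup\Omega_2$ and leaves $u$ unchanged. Testing this competitor in the approximating problems \cref{p:approx} and passing to the limit $\eps_n\to 0$ via \cref{l:semicontinuity} yields
\[
\int_{B_r(x_0)\cap \partial^\ast\Omega_1}u^2\,d\HH^{d-1}\le \int_{B_r(x_0)\cap\partial^\ast\Omega_1'}u^2\,d\HH^{d-1}.
\]
Since $u^2\in[c^2,C^2]$ is $C^{0,\alpha}$, a standard Tamanini-type argument gives $\mathrm{Per}(\Omega_1;B_\rho(y))\le \mathrm{Per}(\Omega_1';B_\rho(y))+K\rho^{d-1+\alpha}$ for $y\in B_{r/2}(x_0)$ and $\rho<r/2$, and the De Giorgi--Federer--Tamanini theory provides that $\partial^\ast\Omega_1\cap B_{r/2}(x_0)$ is $C^{1,\alpha}$ outside a closed singular set of Hausdorff dimension at most $d-8$. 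On the regular part, a transmission bootstrap using the Robin condition $|\nabla u_1|+|\nabla u_2|=\beta(u_1+u_2)$ together with the mean-curvature condition from \cite{GLMV} upgrades $C^{k,\alpha}$-regularity of the interface to $C^{k+2,\alpha}$, ultimately giving $C^\infty$.

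For (ii) and (iii), fix $x_0\in\partial\Omega_1\cap\partial\Omega_2\cap\partial\{u>0\}\cap D$. By \cref{t:regularity-free-boundary}, $\partial\{u>0\}$ is $C^{1,\alpha}$ near $x_0$; together with \cref{l:upgrowth}, \cref{l:downgrowth}, and $|\nabla u|=\sqrt\Lambda$ on the regular free boundary, this gives $u(x)\sim\sqrt\Lambda\,\mathrm{dist}(x,\partial\{u>0\})$. A Riemann conformal map $\Phi$, smooth up to the boundary by Kellogg-type regularity, sends a neighborhood of $x_0$ in $\overline{\{u>0\}}$ onto the upper half-plane $\{(s,y):y>0\}$ with $x_0\mapsto 0$ and $\partial\{u>0\}\mapsto\{y=0\}$, and $|\Phi'|$ is positive, bounded and $C^{0,\alpha}$. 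In coordinates $(s,y)$, the minimality of $\Omega_1$ transfers to an almost-minimality of $\bar\Omega_1:=\Phi(\Omega_1)$ for a weighted perimeter of the form $\int_{\partial^\ast\bar\Omega_1}w(s,y)\,y^2\,d\HH^1$, where $w$ is $C^{0,\alpha}$ and bounded above and below. Now identify the half-plane with $\{(s,y_1,0,0):y_1>0\}\subset\R^4$ and set $\widetilde A:=\{(s,Y)\in\R\times\R^3:(s,|Y|)\in \bar A\}$. The identities
\[
\HH^3\big(\partial^\ast\widetilde A\cap U\big)=4\pi\int_{\partial^\ast\bar A\cap U^+}y^2\,d\HH^1,\qquad |\widetilde A\cap U|=4\pi\int_{\bar A\cap U^+}y^2\,ds\,dy,
\]
for rotationally symmetric $U\subset\R^4$, convert the 2D degenerate weighted problem into an ordinary perimeter problem in $\R^4$. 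A Schwarz-type rearrangement around the axis shows that any 4D competitor can be spherically symmetrized without increasing the perimeter, so one obtains a Tamanini almost-minimality of $\widetilde\Omega_1$ in $\R^4$. The singular set of a perimeter almost-minimizer in $\R^4$ has Hausdorff dimension at most $4-8<0$, hence $\partial^\ast\widetilde\Omega_1$ is $C^{1,\alpha}$ everywhere, including on the axis $\{Y=0\}$, and $C^\infty$ off the axis by bootstrap. A smooth rotationally symmetric hypersurface must meet its axis orthogonally, and by transversality intersects the $1$-dimensional axis only at isolated points. Pulling back through the angle-preserving $\Phi$, we conclude that the contact set is locally finite in $D$, which is (ii), and that $\partial\Omega_1\cap\partial\Omega_2\cap \{u>0\}$ is a $C^\infty$ curve, $C^1$ up to the endpoint $x_0$, attaching orthogonally to $\partial\{u>0\}$, which is (iii).

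The principal obstacle will be making the rotational-lift argument honest: one must verify that every perturbation of $\widetilde\Omega_1$ in $\R^4$ admits a cylindrically symmetric competitor with no greater perimeter (a Schwarz symmetrization around the axis), and that the conformal factor, Dirichlet energy, and volume term contribute only $O(r^{3+\alpha})$ errors so that the Tamanini framework applies uniformly up to the rotation axis. Only then does the 4D regularity theory transfer to the desired regularity of the planar interface up to the free boundary.
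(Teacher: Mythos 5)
Your overall strategy coincides with the paper's: minimality of $\Omega_1$ for the weighted perimeter with weight $u^2$ obtained by testing the approximating problems and passing to the limit with \cref{l:semicontinuity}, interior regularity from the Tamanini theory since the weight is H\"older and bounded away from zero in $D\cap\{u>0\}$, and, near $\partial\{u>0\}$ in $d=2$, a conformal flattening followed by a rotational lift to $\R^4$. Two points deserve comment. First, a step you gloss over but which the paper treats separately (\cref{p:boundary-minimality-omega12}) is that the weighted minimality must be extended to competitors whose symmetric difference with $\Omega_1$ touches $\partial\{u>0\}$; this is done by exhausting with the level sets $\{u>\delta_n\}$, whose perimeters are controlled by \cref{l:inwards}, and using that the weight $u^2$ is $O(\delta_n^2)$ on $\partial^\ast\{u>\delta_n\}$. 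Without this, the flattening argument only applies to perturbations compactly contained in $\{u>0\}$ and says nothing about the contact set.

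Second, and more seriously, the step you yourself flag as the principal obstacle is a genuine gap and would fail as stated. The spherical (Schwarz-type) symmetrization of the $\R^3$-slices replaces each slice by a ball centered on the axis; this operation does \emph{not} fix the lifted set $\widetilde\Omega_1=\{(s,Y):(s,|Y|)\in\bar\Omega_1\}$, because the planar slices $\{y:(s,y)\in\bar\Omega_1\}$ are general open subsets of $(0,\infty)$ (the two phases alternate along each vertical line), not intervals anchored at $y=0$. Hence you cannot symmetrize an arbitrary $4$D competitor back into a comparison with $\widetilde\Omega_1$, and full Tamanini almost-minimality in $\R^4$ (with the clean conclusion ``singular set of dimension $4-8<0$'') is not available by this route. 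The paper sidesteps this entirely: it only records that $\widetilde\Omega_1$ minimizes among \emph{rotationally symmetric} competitors, applies the monotonicity formula to get that blow-ups at axis points are symmetric minimizing cones, and then classifies these (the link of such a cone is an $O(3)$-orbit sphere in $S^3$, which is minimal only if equatorial), forcing the blow-up to be the hyperplane orthogonal to the axis. This weaker symmetric minimality suffices for the uniqueness of the blow-up, the orthogonal attachment, and the discreteness of the contact set, so you should replace the symmetrization step by this argument rather than try to repair it.
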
	

\subsection{Minimality and regularity of the free interface in $D\cap\{u>0\}$}

\begin{prop}[Minimality of the limit sets]\label{p:minimality-omega12}
In any $d\ge 2$, let $\Omega_1$ and $\Omega_2$ be the sets from \cref{sub:omega12} and $u$ be the function from \cref{sub:u_infty}. Then, for every open set $A\subset\subset D\cap\{u>0\}$, 
	\begin{equation}\label{e:minimality-omega12}
	\int_{A\cap\partial^\ast\Omega_1}u^2\,d\HH^{d-1}\le \int_{A\cap\partial^\ast\widetilde \Omega_1}u^2\,d\HH^{d-1}\,,
	\end{equation}
	for every set $\widetilde\Omega_1$ of locally finite perimeter in $D\cap\{u>0\}$ such that $\widetilde\Omega_1\Delta\Omega_1\subset\subset A$.
	
	 In particular, $\partial^\ast\Omega_1\cap \big(D\cap\{u>0\}\big)$ is a $C^{1,\alpha}$-regular manifold up to a closed singular set of Hausdorff dimension at most $d-8$.
\end{prop}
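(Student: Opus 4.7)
The plan is to derive \eqref{e:minimality-omega12} by passing to the limit $\eps_n \to 0$ in the $\eps_n$-minimality of the approximating triples, and then to deduce the regularity statement from the standard theory of almost-minimizers of the perimeter with a H\"older continuous weight.

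Fix $A\subset\subset D\cap\{u>0\}$. The uniform convergence $u_{\eps_n}\to u$ from \cref{p:approx} and the compactness of $\overline A$ provide a constant $t>0$ such that $u_{\eps_n}\ge t/2$ on $\overline A$ for all $n$ large. Consequently $\Omega^1_{\eps_n}\cap A$ and $\Omega^2_{\eps_n}\cap A$ form a measurable partition of $A$, and passing to the limit the same holds for $\Omega_1\cap A$ and $\Omega_2\cap A$. Given a competitor $\widetilde\Omega_1$ with $\widetilde\Omega_1\Delta\Omega_1\subset\subset A$, I would pick an open set $A'$ with Lipschitz boundary such that $\widetilde\Omega_1\Delta\Omega_1\subset\subset A'\subset\subset A$ and (by a Sard-type slicing in a one-parameter family of concentric enlargements) $\HH^{d-1}(\partial A'\cap\partial^\ast\Omega^i_{\eps_n})=0$ for every $n$. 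The lifted competitors
\[
\widetilde\Omega^1_n:=(\Omega^1_{\eps_n}\setminus A')\cup(\widetilde\Omega_1\cap A')\,,\qquad \widetilde\Omega^2_n:=(\Omega^2_{\eps_n}\setminus A')\cup(A'\setminus\widetilde\Omega_1)\,,
\]
are readily checked to belong to $\A(u_{\eps_n})$: they are disjoint, contain $E_1$ and $E_2$ (since $A'\subset\subset D$), have finite perimeter, and their union covers $\{u_{\eps_n}>0\}$.

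Plugging $(\widetilde\Omega^1_n,\widetilde\Omega^2_n)$ into the $\eps_n$-minimality of $(u_{\eps_n},\Omega^1_{\eps_n},\Omega^2_{\eps_n})$ and cancelling the identical contributions outside $A'$, one is left with an inequality comparing $\int_{A'\cap\partial^\ast\Omega^i_{\eps_n}} u_{\eps_n}^2\,d\HH^{d-1}$ to $\int_{A'\cap\partial^\ast\widetilde\Omega^i_n} u_{\eps_n}^2\,d\HH^{d-1}$ up to an error $C\eps_n$ coming from the perimeter term and the transition across $\partial A'$. Sending $n\to\infty$, the semicontinuity lemma \cref{l:semicontinuity} controls the left-hand side (applied separately to $\Omega^1_{\eps_n}$ and $\Omega^2_{\eps_n}$), the right-hand side converges since the competitor is fixed in $A'$ while $u_{\eps_n}^2\to u^2$ uniformly on $\overline A$, and the identity \eqref{e:cannoli1}, applied to $(\Omega_1,\Omega_2)$ (which partition $A$) and to $(\widetilde\Omega_1\cap A',A'\setminus\widetilde\Omega_1)$ (which partition $A'$), reduces each two-term sum to twice the integral on the common interface. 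This yields \eqref{e:minimality-omega12}.

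Once \eqref{e:minimality-omega12} is in place, the H\"older continuity of $u$ (hence of $u^2$) on $\overline A$ together with the uniform lower bound $u^2\ge t^2$ on $A$ promotes it to an almost-minimality property of $\Omega_1$ for the Euclidean perimeter in $A$, with error $O(r^{d-1+\alpha})$ on balls of radius $r$; Tamanini's classical regularity theory for perimeter almost-minimizers then delivers the announced $C^{1,\alpha}$-regularity of $\partial^\ast\Omega_1$ in $A$, up to a closed singular set of Hausdorff dimension at most $d-8$. The main obstacle I expect is the lifting step: choosing $A'$ so that $(\widetilde\Omega^1_n,\widetilde\Omega^2_n)$ is admissible and the $\eps_n$-perimeter contribution stays an $O(\eps_n)$ error rather than contaminating the limit is the delicate part; once the competitor is in place, everything else reduces to the standard combination of semicontinuity on the lower side and uniform convergence of the weight on the upper side.
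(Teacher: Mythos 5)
Your proposal follows essentially the same route as the paper: lift the fixed competitor into the approximating problems by gluing inside a well-chosen subdomain whose boundary meets the relevant reduced boundaries in an $\HH^{d-1}$-null set, test the $\eps_n$-minimality (the perimeter term contributing only an $O(\eps_n)$ error), and pass to the limit using \cref{l:semicontinuity} on the minimizing side and uniform convergence of $u_{\eps_n}^2$ on the competitor side, before invoking Tamanini-type regularity for perimeter almost-minimizers with a H\"older weight bounded away from zero. The only differences (a Lipschitz subdomain $A'$ versus the paper's finite union of balls, and your explicit bookkeeping of the $\eps_n$ and interface-transition errors) are cosmetic.
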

\begin{proof}
Without loss of generality we can suppose that $A$ is a finite union of balls 
$$A=\bigcup_{j=1}^NB_{r_j}(x_j).$$
Let $(u_n,\Omega_n^1,\Omega_n^2):=(u_{\eps_n},\Omega_{\eps_n}^1,\Omega_{\eps_n}^2)$ be the sequence of minimizers from \cref{sub:u_infty} and \cref{sub:omega12} converging to $(u,\Omega_1,\Omega_2)$. Then, by construction 
$$\Omega_n^1\cap\Omega_n^2=\emptyset\qquad\text{and}\qquad \Big(\Omega_n^1\cap A\Big)\cup\Big(\Omega_n^2\cap A\Big)=A,$$
and the same holds for the limit sets $\Omega_1$ and $\Omega_2$. Let now $\widetilde\Omega_1$ be such that $\widetilde\Omega_1\Delta\Omega_1\subset\subset A$. Notice that we can find a family of balls $B_{\rho_j}(x_j)$, $j=1,\dots,N$, such that 
$$B_{\rho_j}(x_j)\subset B_{r_j}(x_j)\qquad\text{for every}\qquad j=1,\dots,N\,;$$
$$\widetilde\Omega_1\Delta\Omega_1\subset\subset \bigcup_{j=1}^NB_{\rho_j}(x_j)\subset\subset \bigcup_{j=1}^{N}B_{r_j}(x_j)\,,$$
and such that for every $j=1,\dots,N$, we have 
$$\HH^{d-1}\Big(\partial^\ast\Omega_1\cap \partial B_{\rho_j}(x_j)\Big)=0\qquad\text{and}\qquad \HH^{d-1}\Big(\partial^\ast\Omega_n^1\cap \partial B_{\rho_j}(x_j)\Big)=0\quad\text{for every}\quad n\ge 1.$$
We define 
$$B:=\bigcup_{j=1}^NB_{\rho_j}(x_j),$$
and consider the sets 
$$\widetilde\Omega_n^1:=\Big(\widetilde\Omega_1\cap B\Big)\cup\Big(\Omega_n^1\setminus B\Big)\qquad\text{and}\qquad \widetilde\Omega_n^2:=\Big(B\setminus\widetilde\Omega_1\Big)\cup\Big(\Omega_n^2\setminus B\Big).$$
Testing the optimality of $(u_n,\Omega_n^1,\Omega_n^2)$ with $(u_n,\widetilde\Omega_n^1,\widetilde\Omega_n^2)$, we obtain 
$$\int_{B\cap\partial^\ast\widetilde\Omega_1}u_n^2+\int_{\partial^\ast\Omega_n^1\setminus B}u_k^2=\int_{\partial^\ast\widetilde\Omega_n^1}u_n^2\ge \int_{\partial^\ast\Omega_n^1}u_n^2=\int_{B\cap\partial^\ast\Omega_n^1}u_n^2+\int_{\partial^\ast\Omega_n^1\setminus B}u_n^2\,$$
which we can write simply as
$$\int_{B\cap\partial^\ast\widetilde\Omega_1}u_n^2\ge \int_{B\cap\partial^\ast\Omega_n^1}u_n^2\,.$$ 
Using \cref{l:semicontinuity} and the uniform convergence of $u_n$ to $u$, we get 
$$\int_{B\cap\partial^\ast\widetilde\Omega_1}u^2=\lim_{n\to\infty}\int_{B\cap\partial^\ast\widetilde\Omega_1}u_n^2\qquad\text{and}\qquad \int_{B\cap\partial^\ast\Omega_1}u^2\le  \liminf_{n\to\infty}\int_{B\cap\partial^\ast\Omega_n^1}u_n^2\,,$$
which concludes the proof of \eqref{e:minimality-omega12}. The regularity of $\partial\Omega_1$ in $A$ then follows (as in \cite{GLMV}) from the fact that $u\in C^{0,\alpha}(A)$ and $u\ge t>0$ in $A$, for some constant $t>0$.
\end{proof}	

\subsection{Minimality of the free interface up to the boundary of $\{u>0\}$} 

\begin{prop}[Minimality of the limit sets]\label{p:boundary-minimality-omega12}
In any $d\ge 2$, let $\Omega_1$ and $\Omega_2$ be the sets from \cref{sub:omega12} and $u$ be the function from \cref{sub:u_infty}. Then, for every ball $B_r(x_0)\subset\subset D$, 
	\begin{equation}\label{e:boundary-minimality-omega12}
	\int_{B_r(x_0)\cap\partial^\ast\Omega_1}u^2\,d\HH^{d-1}\le \int_{B_r(x_0)\cap\partial^\ast\widetilde \Omega_1}u^2\,d\HH^{d-1}\,,
	\end{equation}
	for every set $\widetilde\Omega_1$ such that $\widetilde\Omega_1\Delta\Omega_1\subset\subset B_r(x_0)$ and which is of finite perimeter in $B_r(x_0)$. 
\end{prop}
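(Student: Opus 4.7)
The plan is to mimic the proof of \cref{p:minimality-omega12}, obtaining the claim by passing to the limit in the approximating sequence $(u_n,\Omega_n^1,\Omega_n^2):=(u_{\eps_n},\Omega_{\eps_n}^1,\Omega_{\eps_n}^2)$ from \cref{sub:omega12}. The new difficulty is that $B_r(x_0)$ may cross the free boundary $\partial\{u>0\}$, where no uniform perimeter bound on $\Omega_n^i$ is available. The key observation is that $u$ vanishes continuously on $\partial\{u>0\}$, so the weighted perimeter $\int u^2\,d\HH^{d-1}$ ignores what happens across this boundary, and the semicontinuity lemma \cref{l:semicontinuity} can be applied entirely inside $\{u>0\}$.

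Since $\widetilde\Omega_1\Delta\Omega_1\subset\subset B_r(x_0)$, I first fix $\delta>0$ with $\widetilde\Omega_1\Delta\Omega_1\subset B_{r-2\delta}(x_0)$. A Fubini/coarea argument (combined with passing to a subsequence in $n$) provides $\rho\in(r-\delta,r)$ such that $\HH^{d-1}(\partial B_\rho\cap\partial^\ast Y)=0$ for every $Y\in\{\widetilde\Omega_1,\Omega_n^1,\Omega_n^2:n\in\N\}$ and $\HH^{d-1}(\partial B_\rho\cap\{u>0\}\cap(\Omega_n^i\Delta\Omega_i))\to 0$ as $n\to\infty$. I then set
\begin{align*}
\widetilde\Omega_n^1&:=(\widetilde\Omega_1\cap B_\rho)\cup(\Omega_n^1\setminus B_\rho),\\
\widetilde\Omega_n^2&:=(B_\rho\setminus\widetilde\Omega_1)\cup(\Omega_n^2\setminus B_\rho),
\end{align*}
which belongs to $\A(u_n)$ because inside $B_\rho$ the two sets partition the ball and outside they coincide with $(\Omega_n^1,\Omega_n^2)$.

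Testing the optimality of $(u_n,\Omega_n^1,\Omega_n^2)$ against $(u_n,\widetilde\Omega_n^1,\widetilde\Omega_n^2)$ and cancelling the identical contributions outside $B_\rho$ yields
\begin{equation*}
\tfrac\beta2\int_{B_\rho\cap\partial^\ast\Omega_n^1}u_n^2\,d\HH^{d-1}+\tfrac\beta2\int_{B_\rho\cap\partial^\ast\Omega_n^2}u_n^2\,d\HH^{d-1}\le \beta\int_{B_\rho\cap\partial^\ast\widetilde\Omega_1}u_n^2\,d\HH^{d-1}+E_n,
\end{equation*}
where $E_n$ collects the $\eps_n$-perimeter terms ($O(\eps_n)$ on the right, since $\widetilde\Omega_1$ has finite perimeter, and nonnegative on the left, hence droppable) together with the $\partial B_\rho$-boundary errors, which vanish because of the choice of $\rho$ combined with the uniform convergence $u_n\to u$ on $\overline{B_r}$: on $\{u=0\}\cap\partial B_\rho$ one has $u_n^2\to 0$ uniformly, while on $\{u>0\}\cap\partial B_\rho$ one uses the selected Fubini-type property. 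The right-hand side then converges to $\beta\int_{B_\rho\cap\partial^\ast\widetilde\Omega_1}u^2\,d\HH^{d-1}$ by uniform convergence. For the left-hand side, I apply \cref{l:semicontinuity} on the open set $A:=B_\rho\cap\{u>0\}$, where $\ind_{\Omega_n^i}\to\ind_{\Omega_i}$ pointwise (by \cref{sub:omega12}) and $\Omega_i$ has locally finite perimeter (by \cref{p:minimality-omega12}); extending the integrals from $A$ to $B_\rho$ is free, since $\partial^\ast\Omega_i\cap B_\rho\subset\overline{\{u>0\}}$ and $u\equiv 0$ on $\partial\{u>0\}$.

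The resulting limit inequality reads
\begin{equation*}
\tfrac\beta2\int_{B_\rho\cap\partial^\ast\Omega_1}u^2\,d\HH^{d-1}+\tfrac\beta2\int_{B_\rho\cap\partial^\ast\Omega_2}u^2\,d\HH^{d-1}\le \beta\int_{B_\rho\cap\partial^\ast\widetilde\Omega_1}u^2\,d\HH^{d-1}.
\end{equation*}
To extract a statement about $\Omega_1$ alone, I would note that $\Omega_1\cup\Omega_2=\{u>0\}$ globally (since $E_i\subset\{u=1\}$ and $\{u>0\}\cap D\subset\Omega_1\cup\Omega_2$ by construction) and $\Omega_1\cap\Omega_2=\emptyset$, so that $u\in\widetilde H^1_0(\Omega_1\cup\Omega_2)$; the proof of \eqref{e:cannoli1} via \cref{l:u=0} then forces $u=0$ $\HH^{d-1}$-a.e.\ on the non-common portions of the reduced boundaries, and hence $\int_{B_\rho\cap\partial^\ast\Omega_1}u^2\,d\HH^{d-1}=\int_{B_\rho\cap\partial^\ast\Omega_2}u^2\,d\HH^{d-1}$. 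Dividing by $2$ produces \eqref{e:boundary-minimality-omega12} with $B_\rho$ in place of $B_r$; letting $\rho\nearrow r$ and using $\widetilde\Omega_1\equiv\Omega_1$ on $B_r\setminus B_{r-2\delta}$ adds the same finite quantity on both sides, yielding the full claim. The most delicate point of the plan will be the careful Fubini-selection of $\rho$ and the bookkeeping of the $\partial B_\rho$-error terms, together with the localisation of the identity $\int_{\partial^\ast\Omega_1}u^2=\int_{\partial^\ast\Omega_2}u^2$ to arbitrary balls $B_\rho\subset\subset D$.
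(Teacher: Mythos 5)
Your argument is correct in substance, but it follows a genuinely different route from the paper's. The paper deduces \cref{p:boundary-minimality-omega12} directly from the interior minimality of \cref{p:minimality-omega12}: it exhausts $B_r(x_0)\cap\{u>0\}$ by the superlevel sets $A_n=B_r(x_0)\cap\{u>\delta_n\}$, whose perimeters in $B_r(x_0)$ are uniformly bounded thanks to the subsolution property of \cref{l:inwards}, so that gluing $\widetilde\Omega_1$ to $\Omega_1$ along $\partial^\ast A_n$ costs at most $2C\delta_n^2\to0$ in weighted perimeter; no return to the $\eps_n$-approximations is needed. You instead rerun the approximation argument of \cref{p:minimality-omega12} on a ball that is allowed to cross $\partial\{u>0\}$, choosing a good sphere $\partial B_\rho$ by a Fubini selection and killing the transmission error on $\partial B_\rho\cap\{u=0\}$ via the uniform convergence $u_n\to u$. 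This works and is a legitimate alternative; its price is that you must redo the $\partial B_\rho$ bookkeeping and, because the optimality of $J_{\eps_n}$ only yields an inequality for the \emph{sum} of the two weighted boundary integrals, you additionally need the identity $\int_{B_\rho\cap\partial^\ast\Omega_1}u^2\,d\HH^{d-1}=\int_{B_\rho\cap\partial^\ast\Omega_2}u^2\,d\HH^{d-1}$, a step the paper's route avoids entirely. One caveat on that last step: you justify it by invoking \eqref{e:cannoli1} and \cref{l:u=0}, which are stated for sets of finite perimeter in $\R^d$; at this stage $\Omega_1$ and $\Omega_2$ are only known to have locally finite perimeter in $D\cap\{u>0\}$ (indeed the present proposition is an ingredient in eventually proving that they have finite perimeter), so the appeal is formally circular. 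The identity is nonetheless immediate without those results: almost everywhere in $D$ one has $\Omega_2=\{u>0\}\setminus\Omega_1$, so $\partial^\ast\Omega_1$ and $\partial^\ast\Omega_2$ coincide inside the open set $\{u>0\}$, while $u^2$ vanishes identically on the remaining part of $B_\rho$. With that substitution your proof is complete.
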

\begin{proof}	
Let $\widetilde\Omega_1$ be such that $\widetilde\Omega_1\Delta\Omega_1\subset\subset B_r(x_0)$. 
Since $u$ satisfies the minimality condition from \cref{l:inwards}, we have that there are a sequence $\delta_n\to0$ and a constant $C>0$ such that
$$\HH^{d-1}\Big(B_r(x_0)\cap \partial^\ast \{u>\delta_n\}\Big)\le C\qquad\text{for every}\qquad n\ge 1\,.$$
Moreover, we can suppose that
$$\HH^{d-1}\Big(\partial^\ast\{u>\delta_n\}\cap \partial^\ast\Omega_1\Big)=\HH^{d-1}\Big(\partial^\ast\{u>\delta_n\}\cap \partial^\ast\widetilde \Omega_1\Big)=0.$$
Now, consider the sets
$$A_n:=B_r(x_0)\cap\{u>\delta_n\}\qquad\text{and}\qquad \Omega_n:=\Big(A_n\cap\widetilde\Omega_1\Big)\cup \Big(\Omega_1\setminus A_n\Big).$$
Since $\Omega_n\Delta\Omega_1\subset\subset B_r(x_0)\cap\{u>0\}$, by \cref{p:minimality-omega12}, we have that 
$$\int_{B_r(x_0)\cap \partial^\ast\Omega_1}u^2\le \int_{B_r(x_0)\cap \partial^\ast\Omega_n}u^2,$$
which we write as 
\begin{align*}
\int_{\big(B_r(x_0)\cap \partial^\ast\Omega_1\big)\cap A_n}&u^2+\int_{\big(B_r(x_0)\cap \partial^\ast\Omega_1\big)\setminus A_n}u^2\\
&\le \int_{\big(B_r(x_0)\cap \partial^\ast\widetilde\Omega_1\big)\cap A_n}u^2+\int_{\big(B_r(x_0)\cap \partial^\ast\Omega_1\big)\setminus A_n}u^2+2\int_{B_r(x_0)\cap \partial^\ast A_n}u^2\\
&\le \int_{\big(B_r(x_0)\cap \partial^\ast\widetilde\Omega_1\big)\cap A_n}u^2+\int_{\big(B_r(x_0)\cap \partial^\ast\Omega_1\big)\setminus A_n}u^2+2\delta_n^2C,
\end{align*}
which gives 
\begin{align*}
\int_{\big(B_r(x_0)\cap \partial^\ast\Omega_1\big)\cap A_n}u^2\le \int_{\big(B_r(x_0)\cap \partial^\ast\widetilde\Omega_1\big)\cap A_n}u^2+2\delta_n^2C.
\end{align*}
Passing to the limit as $n\to\infty$, we get \eqref{e:boundary-minimality-omega12}. 
\end{proof}

\subsection{Regularity of the free interface up to the boundary $\partial\{u>0\}$} In this section we will need the $C^{1,\alpha}$ regularity of the free boundary $\partial\{u>0\}$ in $D$, so in order to have \cref{t:regularity-free-boundary} we assume that $d=2$. 

Let $x_0\in D\cap\{u>0\}$ and $B_r(x_0)$ be a (small) ball contained in $D$. Without loss of generality, we suppose that $x_0=(0,0)$. We define the function $h:B_r\to\R$ as 
$$\Delta h=0\,\text{ in }\, B_r\cap\{u>0\}\,,\quad h=u\,\text{ in }\,  \partial B_r\,,\quad  h=0\,\text{ in }\,  B_r\cap\partial\{u>0\}\,.$$
Then, $h$ is $C^{1,\alpha}$ regular in $B_r\cap\{u>0\}$ up to the boundary $B_r\cap\partial\{u>0\}$ and moreover, there is a $C^{0,\alpha}$-regular strictly positive function  
$$a:B_r\cap\overline{\{u>0\}}\to\R$$
such that 
$$a(x)=\frac{h(x)}{u(x)}\quad\text{for}\quad x\in B_r\cap\{u>0\}\,;\quad a(x)=\frac{|\nabla h|(x)}{\sqrt{\Lambda}}\quad\text{for}\quad x\in B_r\cap\partial\{u>0\}.$$
Moreover, choosing $r>0$ small enough the set $B_r\cap\{u>0\}$ is simply connected, so we can find a function 
$w:B_r\cap\{u>0\}\to\R$ such that 
$$\partial_xw=-\partial_yh\quad\text{and}\quad \partial_yw=\partial_xh\quad\text{ in }\quad B_r\cap\{u>0\}\,,$$
$w$ being defined as 
$$w(x,y)=\int_{\gamma_{x,y}}\Big(\partial_yh\,dx-\partial_xh\,dy\Big),$$
where $\gamma_{x,y}:[0,1]\to\R$ is any $C^1$ curve connecting $(0,0)$ to $(x,y)$ in $B_r\cap\{u>0\}$. Then, the map 
$$\Phi:B_r\cap\overline{\{u>0\}}\ ,\qquad \Phi(x,y):=\Big(w(x,y),h(x,y)\Big),$$
is $C^{1,\alpha}$ smooth in $B_r\cap\overline{\{u>0\}}$ (up to the boundary $B_r\cap\partial{\{u>0\}}$) and the set 
$$A:=\Phi\Big(B_r\cap\overline{\{u>0\}}\Big)$$
is a relatively open subset of the upper half-plane $\{(w,h)\in\R^2\ :\ h\ge 0\}$. We notice that for $r$ small enough the function $\Phi$ is invertible. Then, we define 
$$\varphi:A\cap\{(w,h)\in\R^2\ :\ h\ge 0\}\to\R,\qquad \varphi:=\frac1{|\nabla h|a^2}\circ\Phi^{-1}.$$
Then, $\varphi$ is $C^{0,\alpha}$ and is bounded from below by a positive constant. We will show that in the new coordinates the set $\Omega:=\Phi(\Omega_1)$ locally minimizes the functional 
$$\mathcal F_2(\Omega):=\int_{\{h\ge 0\}\cap\partial^\ast \Omega}h^2\varphi(w,h)\,d\HH^1(w,h).$$
In fact, since $\partial^\ast\Omega_1$ is a $C^1$ curve, it is sufficient to check that for any 
$$\gamma:[0,1]\to B_r\cap\overline{\{u>0\}},\quad\gamma(t)=\big(x(t),y(t)\big),$$
we have 
\begin{align*}
\int_{\Phi(\gamma)}h^2\varphi(w,h)&:=\int_0^1h^2\big(\gamma(t)\big)\varphi\big(\Phi(\gamma(t))\big)\sqrt{\big(x'\partial_xw+y'\partial_yw\big)^2+\big(x'\partial_xh+y'\partial_yh\big)^2}\,dt\\
&=\int_0^1h^2\big(\gamma(t)\big)\varphi\big(\Phi(\gamma(t))\big)\sqrt{\big(x'\partial_yh-y'\partial_xh\big)^2+\big(x'\partial_xh+y'\partial_yh\big)^2}\,dt\\
&=\int_0^1h^2\big(\gamma(t)\big)\varphi\big(\Phi(\gamma(t))\big)\sqrt{(\partial_xh)^2+(\partial_yh)^2}\sqrt{(x'(t))^2+(y'(t))^2}\,dt\\
&=\int_0^1u^2\big(\gamma(t)\big)\sqrt{(x'(t))^2+(y'(t))^2}\,dt,
\end{align*}
which concludes the proof. In order to conclude the proof of the $C^{1}$ regularity of $\partial\Omega_1$, it is sufficient to prove that at the point $(w_0,h_0)=(0,0)$ the set 
$\Omega:=\Phi(\Omega_1)$ has a unique blow-up limit given by 
$$\Omega_0:=\{(w,h)\in\R^2\ :\ h\ge 0,\ w>0\}.$$
In order to do so, we consider the set 
$$\mathcal R:=\Big\{(w,X)\in\R\times\R^3\ :\ (w,|X|)\in \Omega\Big\}.$$
Then, $\mathcal R$ is a local minimizer of the functional 
$$\mathcal F_4(\mathcal R):=\int_{\partial^\ast R}\varphi(w,|X|)\,d\HH^3(w,X),$$
among all sets with the same simmetries as $\mathcal R$, that is, all sets of the form 
$$\widetilde{\mathcal R}:=\Big\{(w,X)\in\R\times\R^3\ :\ (w,|X|)\in \widetilde\Omega\Big\},$$
for some $\widetilde\Omega\subset\{(w,h)\in\R^2\ :\ h\ge 0\}$. Now, by the monotonicity formula for the local minimizers of the area (see for instance \cite{maggi}), we have that any blow-up limit $\mathcal R_0$ of $\mathcal R$ is a cone in $\R^4$, which is area-minimizing with respect to perturbations that preserve the simmetries of $\mathcal R$. But then, since the dimension of $\partial\mathcal R_0$ is less than $7$, we have that $\partial\mathcal R_0$ is necessary a plane (with the same simmetries as $\mathcal R$). Thus, $\partial\mathcal R_0$ is ortogonal to the line $\{0,0,0\}\times\R$, which concludes the proof of the uniqueness of the blow-up, which implies points (ii) and (iii) of \cref{t:regularity-free-interface}.

\section{Proof of Theorem \ref{t:main}}\label{s:proof12}

In order to prove the existence of a solution to \eqref{problema}, we observe that as a consequence of an almost-minimality condition involving the one-phase Alt-Caffarelli functional and \cref{t:regularity-free-interface}, we have that the sets $\Omega_1$ and $\Omega_2$ constructed in \cref{sub:omega12} have locally finite perimeter in $D$.
It remains to prove that  
	\begin{enumerate}[\quad\rm(i)]
	\item $\Omega_1$ and $\Omega_2$ are sets of finite perimeter in $\R^d$;
	\item $(u,\Omega_1,\Omega_2)$ is a solution to \eqref{problema}.
	\end{enumerate}
First, for $j=1,2$ and $\delta>0$, we define 
$$E_{j}^\delta:=\Big\{x\in \R^d\setminus E_j\ :\ \text{\rm dist}(x,E_j)>\delta\Big\}.$$
Then, $E_1^\delta\cap\Omega_1$ and $E_2^\delta\cap\Omega_2$ are sets of finite perimeter and, by \cref{p:non-collapsing}, 
$$\text{\rm Per}(E_1^\delta\cap\Omega_1)+\text{\rm Per}(E_2^\delta\cap\Omega_2)<C,$$
where $C$ is a constant that does noit depend on $\delta$. Thus, passing to the limit as $\delta\to0$, we get that $\Omega_1$ and $\Omega_2$ have finite perimeter. Next, in order to prove (ii), we consider $\widetilde u\in\mathcal V$ and $(\widetilde\Omega_1,\widetilde\Omega_2)\in\mathcal A(u)$. Testing the optimality of $(u_\eps,\Omega_\eps^1,\Omega_\eps^2)$ we get that 
$$J_\eps(u_\eps,\Omega_\eps^1,\Omega_\eps^2)\le J_\eps (\widetilde u,\widetilde\Omega_1,\widetilde\Omega_2).$$
Now, the semicontinuity lemma (\cref{l:semicontinuity}) gives that 
 $$J_{\beta,\Lambda}(u,\Omega_1,\Omega_2)=\liminf_{\eps\to0}J_\eps(u_\eps,\Omega_\eps^1,\Omega_\eps^2)\le \lim_{\eps\to0} J_\eps (\widetilde u,\widetilde\Omega_1,\widetilde\Omega_2)=J_{\beta,\Lambda}(\widetilde u,\widetilde\Omega_1,\widetilde\Omega_2),$$
 which concludes the proof of the existence.	
 
Moreover, if $(u,\Omega_1,\Omega_2)$ is any solution to \eqref{problema}, then it satisfies the minimality conditions from \cref{l:outwards}, \cref{l:inwards}, \cref{p:minimality-omega12} and \cref{p:boundary-minimality-omega12}, so by \cref{t:regularity-free-boundary} and \cref{t:regularity-free-interface}, the claims (i) and (ii) of \cref{t:main} follow.

\section*{Acknowledgments}
\noindent
The first author was partially supported by  PRIN 2017 {\it Nonlinear
	Differential Problems via Variational, Topological and Set-valued Methods} (Grant 2017AYM8XW).
The third author has been supported by the European Research Council (ERC) under the European Union's Horizon 2020 research and innovation programme (grant agreement VAREG, No. 853404).

\end{document}